\renewcommand{\geq}{\geqslant}
\renewcommand{\leq}{\leqslant}
\def\reals{{\mathbb R}}
\def\Ci{{\mathcal C}^\infty}
\def\supp{\mathrm{supp}\,}
\def\O{{\mathcal O}}
\def\s{{\mathcal S}}
\def\Op{\mathrm{Op}\,}
\def\be{\begin{eqnarray*}}
\def\ee{\end{eqnarray*}}
\def\ben{\begin{eqnarray}}
\def\een{\end{eqnarray}}
\def\lll{\left\langle}
\def\rrr{\right\rangle}
\def\L2R{L_{\text{Rest}}^2}
\def\11{\mathds{1}}
\def\tpsi{\tilde{\psi}}
\def\tchi{\tilde{\chi}}
\def\L2c{L^2_{\text{comp}}}
\def\tphi{\tilde{\phi}}
\def\tu{u^{hi}}
\def\tB{\tilde{B}}
\newtheorem{theorem}{Theorem}[section]
\newtheorem{lemma}[theorem]{Lemma}
\newtheorem{proposition}[theorem]{Proposition}
\newtheorem{remark}[theorem]{Remark}
\newtheorem{corollary}[theorem]{Corollary}
\newtheorem*{main-theorem 1}{Main Theorem 1}
\newtheorem*{main-theorem 2}{Main Theorem 2}
\newtheorem*{theorem*}{Theorem}
\newtheorem*{remark*}{Remark}
\numberwithin{equation}{section}
\begin{document}

\title[Strichartz estimate for the water-wave problem]{
Strichartz estimates for \\
the water-wave problem with surface tension}

\author[Christianson]{Hans~Christianson}
\address{Massachusetts Institute of Technology, Department of Mathematics\\
77 Massachusetts Avenue, Cambridge, MA 02139-4307, USA}
\email{hans@\allowbreak math.\allowbreak mit.\allowbreak edu}

\author[Hur]{Vera~Mikyoung~Hur}
\email{verahur@\allowbreak math.\allowbreak mit.\allowbreak edu}

\author[Staffilani]{Gigliola~Staffilani}
\email{gigliola@\allowbreak math.\allowbreak mit.\allowbreak edu}

\date{\today}

\maketitle

\begin{abstract}
Strichartz-type estimates for one-dimensional surface water-waves under surface tension are studied, 
based on the formulation of the problem as a nonlinear dispersive equation. 
We establish a family of dispersion  estimates  on time scales depending on the size of the frequencies.
We infer that a solution $u$ of the dispersive equation we introduce satisfies 
local-in-time Strichartz estimates with loss in derivative:
 \[
\| u \|_{L^p([0,T]) W^{s-1/p,q}(\mathbb{R})} \leq C, 
\qquad  \frac{2}{p} + \frac{1}{q} = \frac{1}{2},
\]
where $C$ depends on $T$ and on the norms of the initial data in $H^s
\times H^{s-3/2}$.
The proof uses the frequency analysis and semiclassical Strichartz estimates 
for the linealized water-wave operator.
\end{abstract}

\setcounter{tocdepth}{1}

\tableofcontents

\section{Introduction}\label{S:intro}

The problem of {\em surface water waves}, in its simplest form, 
concerns the two-dimensional dynamics of an incompressible inviscid liquid of infinite depth and 
the wave motion on its one-dimensional surface layer,
under the influence of gravity and surface tension. 
The {\em moving} surface is given as a nonself-intersecting parametrized curve.
The liquid occupies the domain below the curve, 
where the liquid motion is described by the Euler equations under gravity.
The flow beneath the moving surface is required to be irrotational.
The {\em kinematic} and {\em dynamic} boundary conditions hold at the moving surface,
stating respectively that the normal component of velocity is continuous along the moving surface
and that the jump in pressure across the moving surface is proportional to its mean curvature.
The flow is assumed to be almost at rest at great depths, 
and the moving surface is taken to be asymptotically flat.

Provided with the initial surface profile and the initial state of fluid current,
the water-wave problem naturally poses as an initial value problem.
Early mathematical results for local well-posedness date back to 
\cite{Ovs, KaNi} and include \cite{Nal, Yos1, Yos2, Cra}.
Following the works by Sijue Wu \cite{Wu1, Wu2}
there has been considerable progress in the study of local well-posedness
for the water-wave problem as well as for a class of the Euler equations with free boundary.
We refer the interested reader to 
\cite{CL, AM1, AM2, Lan, Lin, CS1, SZ3}, and references therein. 
Recently, results for long-time existence \cite{Wu3, GNS} appeared 
for gravity water waves of infinite depth. 

Nonlinearity characteristic to the boundary conditions at the moving surface
significantly restricts the range of analytical tools available 
for the existence theory for the water-wave problem.
As a matter of fact, all results listed in the previous paragraph on local well-posedness
hinge upon obtaining high energy expressions and establishing their bounds,
namely the {\em energy method}.
While construction of such energy expressions is nontrivial 
and design of an iteration scheme is involved, nevertheless, 
results from the energy method do not provide any further information about solutions,
other  than that they remain as smooth as their initial states.
Better understanding of the dynamics of surface water waves
can be made with the help of a priori estimates other than energy estimates. 

On the other hand, the {\em dispersion relation} (see Remark \ref{SS:dispersion})
\begin{equation}\label{E:dispersion_relation}
c(k)=\left(\frac{S}{2}|k|+\frac{g}{|k|}\right)^{1/2} \frac{k}{|k|}
\end{equation}
of surface water waves provides a guiding principle of their {\em linear} dynamics. Here, 
$c(k)$ is the speed of the simple harmonic oscillation with the wave number $k$;
$S\geq 0$ is the coefficient of surface tension 
and $g\geq 0$ is the gravitational constant of acceleration. 
Under the influence of surface tension, i.e. $S>0$, 
the fact that the phase velocity $c(k)$ is asymptotically proportional to 
the square root of $k$ as $k \to \infty$ indicates 
a certain ``regularizing'' effect by the process of broadening out the surface profile.
In the gravity-wave setting, i.e. $S=0$ and $g>0$, in contrast, 
\eqref{E:dispersion_relation} does not induce such a regularizing effect\footnote{
Gravity waves may still be thought of as ``dispersive''
in the sense that wave components with different frequencies propagate
at different speeds; see \cite{Wu3}.}.

Dispersive properties have  paramount importance in the study of
nonlinear Schr\"odinger equations, the Korteweg-de Vries equation, 
nonlinear wave equations, and other nonlinear dispersive equations. In the recent work of
Alazard, Burq and Zuilly \cite{ABZ-water}, local smoothing effects are
obtained for water waves under surface tension (see also Appendix \ref{S:local_smoothing}).
Such a smoothing effect is a direct consequence 
of the dispersive property of surface water waves,
and it contrasts markedly with what energy estimates alone can tell. 
The present purpose is to investigate the dispersive property for
the water-wave problem with one-dimensional surface under surface tension
by establishing estimates of the solution under the mixed Sobolev norms, 
commonly referred to as {\em Strichartz estimates}.

\subsection{The main results}\label{SS:result}
The present treatment of the dispersive property for the water-wave problem
under surface tension is based on the formulation 
of the problem as a second-order in time nonlinear dispersive equation
\begin{equation}\label{E:main}
\partial_t^2 u-\frac{S}{2}H\partial_\alpha^3 u+gH\partial_\alpha u=
-2u \partial_t \partial_\alpha u -u^2 \partial_\alpha^2 u+R(u,\partial_t u),
\end{equation}
coupled with a transport-type equation \eqref{E:transp}. 
We shall derive it in Section \ref{S:formulation} and Section \ref{S:reformulation}.
Here, $u$ is related to the tangential velocity at the moving surface 
and it serves as the unknown; $t \in \mathbb{R}_+$ is the temporal variable and 
$\alpha \in \mathbb{R}$ is the (renormalized) arclength parametrization of the curve,
which serves as the spatial variable. 
The Hilbert transform, denoted by $H$, may be defined via the Fourier transform as 
$\widehat{Hf}(\xi)=-i\text{sgn}(\xi)\widehat{f}(\xi)$.
The remainder $R$ is of lower order 
compared to $2u \partial_t \partial_\alpha u$ and $u^2 \partial_\alpha^2 u$ in the sense that 
\[
\| R(u,\partial_t u)\|_{H^s} \leq C(\|u\|_{H^{s+1}}, \|\partial_t u\|_{H^s})
\]
for $s\geq 1$. Here and elsewhere, $H^s$ means the $L^2$-Sobolev space  of order $s$
in the variable $\alpha \in \mathbb{R}$.  


Our main results concern Strichartz estimates for the water-wave problem under surface tension
with loss in derivative.
In the course of the proof, its local well-posedness is proved. 

\begin{theorem}\label{Main1}
Let $S>0$ and $g\geq 0$ be held fixed. For $s> 2+1/2$ 
the initial value problem of \eqref{E:main}
prescribed with the initial conditions 
\[
u(0,\alpha)=u_0(\alpha) \quad \text{and}\quad \partial_t u(0,\alpha)=u_1(\alpha),
\]
where $(u_0, u_1) \in H^{s}(\mathbb{R}) \times H^{s-3/2}(\mathbb{R})$
is locally well-posed on a time interval $t \in [0, T]$ for some $T>0$, and the solution $u$  satisfies 
$(u(t), \partial_t u(t)) \in C([0,T]; H^s(\mathbb{R}) \times H^{s-3/2}(\mathbb{R}))$.

Moreover, if $s$ is sufficiently large, the solution $u$ satisfies the inequality
\begin{equation}\label{E:main-est}
\Big(\int_0^T\Big( \int_{-\infty}^\infty |
D^{s-1/p}_\alpha u(t,\alpha) |^qd\alpha\Big)^{p/q}dt\Big)^{1/q} \leq C,
\end{equation}
where $(p,q)$ satisfies the admissibility condition
\begin{equation}\label{E:p-q}
\frac{2}{p} + \frac{1}{q} = \frac{1}{2}, \qquad q < \infty,
\end{equation}
and $C>0$ depends on $s, q, p, T$ and $\| u_0 \|_{H^s(\mathbb{R})}, \|u_1 \|_{H^{s-3/2}(\mathbb{R})}$.
Here and in sequel, $D_\alpha=-i\partial_\alpha$.
\end{theorem}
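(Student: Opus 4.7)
The plan is to combine a quasilinear energy-method iteration for local well-posedness with a dyadic frequency analysis that reduces \eqref{E:main-est} to a family of semiclassical Strichartz estimates for the linearized water-wave operator on time intervals whose length is dictated by the dispersion relation $\tau^2\sim (S/2)|\xi|^3$. The choice of energy space $H^s\times H^{s-3/2}$ reflects the balance built into the dispersion: $\partial_t$ effectively carries weight $|D_\alpha|^{3/2}$ on the characteristic variety. Since a wave packet at frequency $|\xi|\sim 2^j$ travels at group velocity $\sim 2^{j/2}$, a WKB parametrix is valid only on the semiclassical time scale $\tau_j\sim 2^{-j/2}$, and this short time is the ultimate origin of the $1/p$ loss in derivative.

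First I would set up local well-posedness by linearizing \eqref{E:main} and the coupled transport equation \eqref{E:transp} around an iterate $v$, obtaining a linear equation of the form
\[
\partial_t^2 u-(S/2)H\partial_\alpha^3 u+gH\partial_\alpha u+2v\,\partial_t\partial_\alpha u+v^2\,\partial_\alpha^2 u=F(v,\partial_t v,u),
\]
and constructing an energy in $H^s\times H^{s-3/2}$. The quasilinear terms $2v\partial_t\partial_\alpha u$ and $v^2\partial_\alpha^2u$ are subprincipal relative to $H\partial_\alpha^3$ but have nonsmooth coefficients, so I would symmetrize them paradifferentially in the spirit of \cite{ABZ-water}. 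The embedding $H^{s-2}\hookrightarrow L^\infty$, valid for $s>5/2$, supplies the $L^\infty$ control on $v,\partial_\alpha v$ needed to close the energy estimate, and the remainder $R$ is strictly lower order than the displayed quasilinear terms by the hypothesis on its $H^s$ norm. A contraction argument in a weaker norm produces uniqueness and continuity in time.

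For the Strichartz bound I would decompose $u=\sum_{j\geq 0}u_j$ into Littlewood--Paley pieces with $u_j$ supported in $|\xi|\sim 2^j$, localize $[0,T]$ to subintervals $I$ of length $\tau_j\sim 2^{-j/2}$, and freeze the coefficients $u,u^2$ at their initial values on each $I$, absorbing the temporal variation as a source that is reclaimed by the energy estimate. The frozen linear operator is a smooth second-order-in-time, third-order-in-space pseudodifferential operator with convex characteristic variety; on $I$ a semiclassical Strichartz estimate of wave-packet / $TT^*$ type yields
\begin{equation*}
\|u_j\|_{L^p(I) L^q(\mathbb{R})} \leq C\,2^{-sj}\bigl(\|u_0\|_{H^s}+\|u_1\|_{H^{s-3/2}}\bigr)
\end{equation*}
for $(p,q)$ on the admissible line \eqref{E:p-q}. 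Summing the $L^p(I)$-norms over the $\sim T/\tau_j$ subintervals covering $[0,T]$ costs a factor $(T\cdot 2^{j/2})^{1/p}$, and applying $D_\alpha^{s-1/p}$ multiplies by $2^{(s-1/p)j}$; the net exponent $(s-1/p)j-sj+j/(2p)=-j/(2p)$ is summable in $j$, producing exactly the $1/p$ derivative loss announced in \eqref{E:main-est}. A Littlewood--Paley square-function argument then assembles the pieces $u_j$ into $u$ on the left-hand side.

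The main obstacle is the semiclassical step: the quasilinear coefficients $u,u^2$ interact with the nonlocal Hilbert transform $H$ inside the third-order term, so freezing coefficients on the scale $\tau_j=2^{-j/2}$ must be done carefully enough to preserve the self-adjoint principal structure, the convexity of the characteristic variety, and the dispersion estimate. Constructing an explicit WKB parametrix for the frozen operator in a form from which Strichartz estimates follow by stationary phase and $TT^*$, and controlling the errors generated both by the lower-order remainder $R$ and by the temporal drift of the coefficients inside $I$, is the analytical heart of the proof, and it is exactly here that the dispersion estimates advertised in the abstract are consumed.
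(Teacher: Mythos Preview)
Your overall architecture---dyadic decomposition, semiclassical time scale $\tau_j \sim 2^{-j/2}$, gluing $\sim 2^{j/2}$ intervals to incur the $1/p$ loss, and Littlewood--Paley reassembly---matches the paper's. Two points, however, need correction or comment.

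First, your semiclassical bookkeeping is off by $1/(2p)$ derivatives. Stationary phase on the phase $\alpha\xi + t|\xi|^{3/2}$ gives $|\partial_\xi^2\phi|^{-1/2} \sim t^{-1/2}|\xi|^{1/4}$, so the kernel at frequency $2^j$ obeys $|K| \lesssim 2^{j/4} t^{-1/2}$, not a bare $t^{-1/2}$. Fed into the abstract semiclassical Strichartz machinery with $\sigma=1/2$, $\mu=1/4$, $h=2^{-j}$, this yields $\|u_j\|_{L^p(I)L^q} \lesssim 2^{j/(2p)} \|u_{0,j}\|_{L^2}$, not the lossless bound you wrote. With the correct semiclassical loss, the net exponent after summing intervals and applying $D_\alpha^{s-1/p}$ is $0$, not $-j/(2p)$; the dyadic sum then closes in $\ell^2$ via the Littlewood--Paley square function (this is why $q<\infty$ is required), not by $\ell^1$ summability.

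Second, the paper does \emph{not} freeze coefficients. It constructs the parametrix for the fully variable-coefficient operator $D_t^2 - iHD_\alpha^3 + 2V D_t D_\alpha + V^2 D_\alpha^2$ by solving the Hamilton--Jacobi equation $(\phi_t + V\phi_\alpha)^2 = \phi_\alpha^3$ with $V(t,\alpha)$ kept live; the phase is $\alpha\xi + |\xi|^{3/2}(t + \vartheta)$ with $\vartheta = \mathcal{O}(t)$ on the interval of length $2^{-j/2}T$. Your freezing-plus-source strategy is a legitimate alternative in the spirit of Tataru or Burq--G\'erard--Tzvetkov, but on this time scale the temporal drift contributes a forcing of size $\sim 2^{-j/2}\partial_t\partial_\alpha u_j$, which is $\mathcal{O}(1)$ rather than small in the Duhamel norm $L^1(I)H^{1/(2p)-3/2}$; you would need a more careful argument that the energy estimate reclaims it without spoiling the Strichartz bound. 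The paper sidesteps this by building $V$ into the phase.

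For local well-posedness the paper also differs: rather than paradifferential symmetrization, it rewrites the equation as a first-order system in $(u,v)$ with $v = \partial_t u + u\partial_\alpha u$ the material derivative, which splits the dangerous term $u^2\partial_\alpha^2 u$ into two transport pieces and permits a direct energy estimate (the naive energy based on $\partial_t u$ fails for exactly this reason). The passage from linear to nonlinear Strichartz is then done by applying $\partial_\alpha^s$ to the equation and viewing $\partial_\alpha^s u$ as the unknown in a linear problem with coefficient $V=u$, to which the linear result applies verbatim.
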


If the solution is localized to dyadic frequency bands and semiclassical time scales, 
the estimate is better.

\begin{theorem}\label{Main2}
Let $\psi^j(D_\alpha)$ be a Fourier multiplier
supported in frequencies $2^{j-2} \leq | \xi | \leq 2^{j+2}$.
Under the hypothesis of Theorem \ref{Main1} with $s$ sufficiently large, 
the frequency-localized solution $\psi^j(D_\alpha) u$ satisfies
\begin{equation}
\label{E:main-est-sc}
\Big(\int_0^{2^{-j/2}T}\Big( \int_{-\infty}^\infty |
D^{s-1/2p}_\alpha \psi^j(D_\alpha) u(t,\alpha) |^qd\alpha\Big)^{p/q}dt\Big)^{1/q} \leq C,
\end{equation}
where $(p,q)$ satisfies \eqref{E:p-q} with $q \leq \infty$ 
and $C>0$ depends on $s, q, p, T$ and $\| u_0 \|_{H^s_\alpha}, \|u_1 \|_{H^{s-3/2}_\alpha}$.
\end{theorem}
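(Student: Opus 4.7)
The plan is a semiclassical $TT^*$ argument on each dyadic frequency band, applied to a paralinearized form of \eqref{E:main}. The key scaling observation is that the linear symbol $p(\xi) := (S/2)|\xi|^3 + g|\xi|$ has square root $\sqrt{p(\xi)}$ whose second $\xi$-derivative is of size $|\xi|^{-1/2}$, so on the band $|\xi|\sim 2^j$ non-degenerate stationary phase yields an $L^1\to L^\infty$ dispersive decay of order $2^{j/4}|t|^{-1/2}$, valid over the time window $|t|\leq 2^{-j/2}T$. The latter is precisely the largest scale on which this decay can be sustained under the quasilinear perturbation, and it drives the admissibility $2/p+1/q=1/2$ together with the loss $1/(2p)$ in \eqref{E:main-est-sc}.

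First, paralinearize \eqref{E:main} via Bony's calculus. The quasilinear terms $-2u\partial_t\partial_\alpha u$ and $-u^2\partial_\alpha^2 u$ are replaced by suitable para-products plus remainders controlled in $H^{s-1}$ by Theorem \ref{Main1}, after which a standard good-unknown reduction yields an equation of the form
\[
\partial_t^2 u + P(t,\alpha,D_\alpha)u = F[u,\partial_t u],
\]
where the principal symbol of $P$ equals $p(\xi) = (S/2)|\xi|^3 + g|\xi|$ up to lower-order $t$-dependent perturbations whose coefficients inherit the regularity of $u$. Since the characteristic roots $\pm\sqrt{p(t,\alpha,\xi)}$ are real and simple above a high-frequency threshold, the standard paradifferential factorization produces
\[
\bigl(\partial_t - i\Lambda(t)\bigr)\bigl(\partial_t + i\Lambda(t)\bigr) = \partial_t^2 + P + R,
\]
where $R$ is smoothing and $\Lambda(t)$ is a paradifferential operator whose principal symbol is $\sqrt{p(t,\alpha,\xi)}\,\sgn\xi$.

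Next, apply the Littlewood--Paley projector $\psi^j(D_\alpha)$ and build a semiclassical WKB parametrix $U_\pm^j(t,0)$ for each half-wave $\partial_t \mp i\Lambda(t)$, localized to $|\xi|\sim 2^j$ and valid on $t\in[0,2^{-j/2}T]$. Solving the eikonal and transport equations and invoking non-degenerate stationary phase (with Hessian of the phase of order $2^{-j/2}$ in $\xi$) yields the frequency-localized dispersive estimate
\[
\bigl\|U_\pm^j(t,0)\psi^j(D_\alpha)f\bigr\|_{L^\infty_\alpha} \leq C\, 2^{j/4}|t|^{-1/2}\bigl\|\psi^j(D_\alpha)f\bigr\|_{L^1_\alpha}, \qquad 0<|t|\leq 2^{-j/2}T.
\]
Combined with the trivial $L^2$-bound $\|U_\pm^j(t,0)\|_{L^2\to L^2}\lesssim 1$ furnished by paradifferential energy estimates, the Keel--Tao $TT^*$/Hardy--Littlewood--Sobolev machinery produces, for every admissible pair $2/p+1/q = 1/2$ with $q\leq\infty$,
\[
\bigl\|U_\pm^j(\cdot,0)\psi^j(D_\alpha)f\bigr\|_{L^p_t([0,2^{-j/2}T])L^q_\alpha} \leq C\, 2^{j/(2p)}\bigl\|\psi^j(D_\alpha)f\bigr\|_{L^2_\alpha},
\]
the factor $2^{j/(2p)}$ arising as the $2/p$-th power of the dispersive constant $2^{j/4}$. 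Writing $\psi^j(D_\alpha)u$ via Duhamel against $U_\pm^j$, applying this estimate to the initial data $(u_0, u_1)\in H^s\times H^{s-3/2}$ (using the identity $2^{j/(2p)}\cdot 2^{j(s - 1/(2p))} = 2^{js}$), and controlling the forcing $\psi^j F$ by Theorem \ref{Main1} delivers \eqref{E:main-est-sc}.

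The main obstacle is the WKB construction: producing a parametrix on the full semiclassical window $|t|\leq 2^{-j/2}T$ when the symbol of $\Lambda(t)$ depends on the solution $u$ and hence carries only the regularity guaranteed by Theorem \ref{Main1}. One must verify that the Hamilton flow of $\sqrt{p(t,\alpha,\xi)}\,\sgn\xi$ initiated on the fibre $|\xi|\sim 2^j$ remains caustic-free for $|t|\leq 2^{-j/2}T$, and that the WKB amplitudes remain solvable within tame bounds on that window; this is precisely why the time scale $2^{-j/2}$ (rather than a longer one) appears. Once the parametrix is in hand, the $TT^*$ and Duhamel steps are routine.
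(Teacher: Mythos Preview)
Your proposal is essentially correct and follows the same high-level strategy as the paper: build a dyadic-frequency parametrix on the semiclassical time scale $[0,2^{-j/2}T]$, prove the dispersion bound $2^{j/4}|t|^{-1/2}$ by stationary phase, and feed it into an abstract semiclassical Strichartz theorem (the paper quotes this as Theorem~\ref{T:general-sc-str}). The identification of the derivative loss $1/(2p)$ with the prefactor $2^{j/4}$ in the dispersion is exactly right.

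The technical route, however, differs from the paper's. You invoke Bony paralinearization, a good-unknown reduction, and a paradifferential half-wave factorization $(\partial_t-i\Lambda(t))(\partial_t+i\Lambda(t))$; the paper uses none of this machinery. Instead it simply freezes the solution $u$ as a variable coefficient $V(t,\alpha)$ in the second-order operator $P=D_t^2-iHD_\alpha^3+2VD_\alpha D_t+V^2D_\alpha^2$ and constructs the parametrix directly for $P$ via an oscillatory-integral ansatz $w=\sum_\pm\iint e^{i(\phi^{j,\pm}-\beta\xi)}f^{j,\pm}(\beta)\,d\beta\,d\xi$ with \emph{two} phase functions. The $\pm$ splitting you obtain from operator factorization appears in the paper only at the level of the eikonal equation, which is quadratic in $\phi_t$ and factors as $\phi_t^\pm=-V\phi_\alpha^\pm\pm(\phi_\alpha^\pm)^{3/2}$; this is solved by a Hamilton--Jacobi/generating-function argument (Lemma~\ref{HJ-lemma}) yielding phases of the form $\alpha\xi\pm|\xi|^{3/2}(t+\vartheta^{j,\pm})$ with $\vartheta^{j,\pm}=\O(t)$ on $|t|\leq2^{-j/2}T$. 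The dispersion estimate (Lemmas~\ref{disp-lemma}--\ref{disp-lemma-2}) and the linear Strichartz estimate (Theorem~\ref{T:general-str}) then proceed as you describe. The passage to the nonlinear problem (Section~\ref{S:strichartz_nonlinear}) is done not by Duhamel against a paralinearized forcing but by applying $\partial_\alpha^s$ to \eqref{E:u}, treating the result as the linear problem \eqref{E:U-eqn-1} with $V=u$ and inhomogeneity $\tilde R=\partial_\alpha^s R+[\partial_\alpha^s,2u\partial_\alpha\partial_t+u^2\partial_\alpha^2]u$, and controlling $\tilde R$ by the energy bounds from the local well-posedness (Theorem~\ref{T:local-wellposedness}), not by the Strichartz part of Theorem~\ref{Main1}. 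Your paradifferential route is closer in spirit to \cite{ABZ-water} and would also work, at the cost of importing more calculus; the paper's approach is more hands-on but avoids that overhead.
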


\subsection*{Notations}
\label{notation-sect}
Recorded here are the notations and conventions used in the sequel.

Let $0 \leq k , l\leq \infty$ and $1 \leq p , q \leq \infty$.
By $W^{k,q}_\alpha(\mathbb{R})$ we mean the $L^q$ Sobolev space 
on $\alpha \in \mathbb{R}$ of order $k$,
and by $W^{l,p}_t([0,T])$ we mean the $L^p$ Sobolev space 
on the interval $t \in [0,T]$ of order $l$. 
By $H^l_t([0,T])$ the $L^2$ Sobolev space on the interval $t \in [0,T]$ of order $l$ .  
We will also use the Sobolev spaces of negative order, $H^k_\alpha(\mathbb{R})$ with $k<0$.
For $0\leq p,q \leq \infty$ we recall the definitions for the mixed Sobolev spaces  
$L^q_\alpha(\reals) L^p_t ([0,T])$ and $L^p_t([0,T])L^q_\alpha(\reals)$ by the norms of these spaces
\begin{eqnarray*}
\|f\|_{L^q_\alpha(\reals) L^p_t ([0,T])}&=&\left(\int_{\reals}\left(\int_0^T
|f(t,\alpha)|^p\,dt\right)^{q/p}\,d\alpha\right)^{1/q},\\
\|f\|_{L^p_t([0,T])L^q_\alpha(\reals)}&=&\left(\int_0^T\left(\int_{\reals}
|f(t,\alpha)|^q\,d\alpha\right)^{p/q}\,dt\right)^{1/p}.
\end{eqnarray*}

We write $L^q_\alpha L^p_T$ for $L^q_\alpha(\reals) L^p_t ([0,T])$
and $L^p_TL^q_\alpha$ for $L^p_t([0,T])L^q_\alpha(\reals)$
when there is no ambiguity. 
We use the analogous  convention for 
$W^{k,q}_\alpha W^{l,p}_T$, $W^{l,p}_T W^{k,q}_\alpha$, and $H^l_T H^k_\alpha $.

\subsection{Perspectives}\label{SS:perspectives}
The derivative loss of $1/p$
in Theorem \ref{Main1} is likely not sharp, as the following heuristic arguments indicate.  

For any dispersive equation in one spatial dimension, the $W^{s,1}_\alpha \to L^\infty_\alpha$ decay rate is $t^{-1/2}$  (with loss of $s$ derivatives depending on the equation).  If we linearize about the zero solution (see \eqref{E:ww-oscillatory} below), we see the solution satisfies Strichartz estimates with the admissibility condition \eqref{E:p-q} and a $1/2p$ derivative loss (see, for example, \cite{CHS-oi}), which is an improvement of $1/2p$ derivatives compared to Theorem \ref{Main1}.
Moreover, this equation satisfies the scaling symmetry\footnote{In the absence of the effect of gravity, $g=0$, 
the nonlinear equation \eqref{E:main} also enjoys this scaling symmetry.   This  follows from the scaling symmetry of the Euler equations and the dynamic boundary condition that 
the jump of pressure across the moving surface is proportional to the mean curvature of the surface.}
\[
u(t, \alpha) \mapsto 
\lambda^{1/2} u(\lambda^{3/2} t, \lambda \alpha)
\]
for any dilation factor $\lambda>0$, and we readily verify that Strichartz estimates with the admissibility condition \eqref{E:p-q} and $1/2p$ derivative loss is invariant with respect to this scaling.  We thus expect Strichartz estimates with admissibility condition \eqref{E:p-q} and $1/2p$ derivative loss to be optimal.  That is, Theorem \ref{Main1} represents twice the loss in derivative of the optimal estimate.

However, this optimal estimate cannot be obtained by interpolation with known estimates, even in weighted form.  Indeed, to compare to the local smoothing estimate  (\cite{ABZ-water} or Appendix \ref{S:local_smoothing}),  if we use Sobolev embeddings, we have
\[
\|  \lll \alpha \rrr^{-\rho} D_\alpha^{s-1/2p} u \|_{L^p(0,T]) L^q_\alpha} \leq 
C \| \lll \alpha \rrr^{-\rho} D_t^{1/2 - 1/p} D_\alpha^{s-1/2p + 1/2 - 1/q} u \|_{L^2([0,T]) 
L^2_\alpha}
\]
and if we use that $D_t$ is comparable to $D_\alpha^{3/2}$ 
(at least for a solution linearized about $0$), in turn, we have
\[
\|  \lll \alpha \rrr^{-\rho} D_t^{1/2-1/p} D_\alpha^{s+1/2 - 1/q} u \|_{L^2([0,T]) L^2_\alpha} 
\leq C \|  \lll \alpha \rrr^{-\rho} D_\alpha^{s+5/4 - 1/q - 2/p} u \|_{L^2([0,T]) L^2_\alpha}.
\]
By the local smoothing effect gain of $1/4$ derivative,   
we then bound $\| \lll \alpha \rrr^{-\rho}  D_\alpha^{s-1/2p} u \|_{L^p(0,T]) L^q_\alpha}$, $\rho >1/2$, 
in terms of the  initial data in $H^s(\mathbb{R}) \times H^{s-3/2}(\mathbb{R})$, provided that
\[
\frac{2}{p} + \frac{1}{q} = 1.
\]
This is weaker than the optimal estimate.  On the other hand, if we use H\"older's inequality plus energy conservation, we get a loss of $1/2p$ derivatives provided
\[
\frac{1}{2p} + \frac{1}{q} = \frac{1}{2}
\]
(see Figure \ref{F:fig1a}).
\begin{figure}
\hfill
\centerline{\input{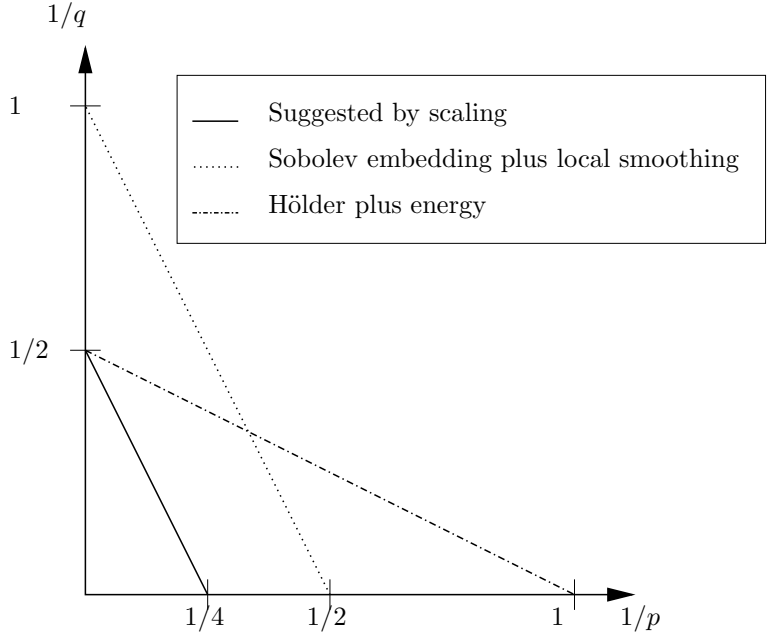}}
\caption{\label{F:fig1a} Fixed time scale, $1/2p$ derivative loss.  The $(p,q)$ relation suggested by scaling, from Sobolev embeddings plus
local smoothing effect, and from H\"older's inequality in time with Sobolev embeddings and energy conservation.}
\hfill
\end{figure}

To make a direct comparison of the estimate of Theorem \ref{Main1} with the optimal condition is not as clear, since we must use Sobolev embeddings somewhere.  If we do use an additional Sobolev embedding in the discussion above to make a comparison of $1/p$ derivative loss, the optimal admissibility condition becomes
\begin{equation}
\label{E:p-q-optimal-loss}
\frac{5}{2p} + \frac{1}{q} = \frac{1}{2},
\end{equation}
while that from smoothing is \eqref{E:p-q-optimal-loss} with the right hand side replaced by $1$, and that for energy estimates is 
\[
\frac{1}{p} + \frac{1}{q} = \frac{1}{2}
\]
(see FIgure \ref{F:fig1}).  Again we see that the estimate of Theorem \ref{Main1} cannot be obtained by interpolation between known estimates.

\begin{figure}
\hfill
\centerline{\input{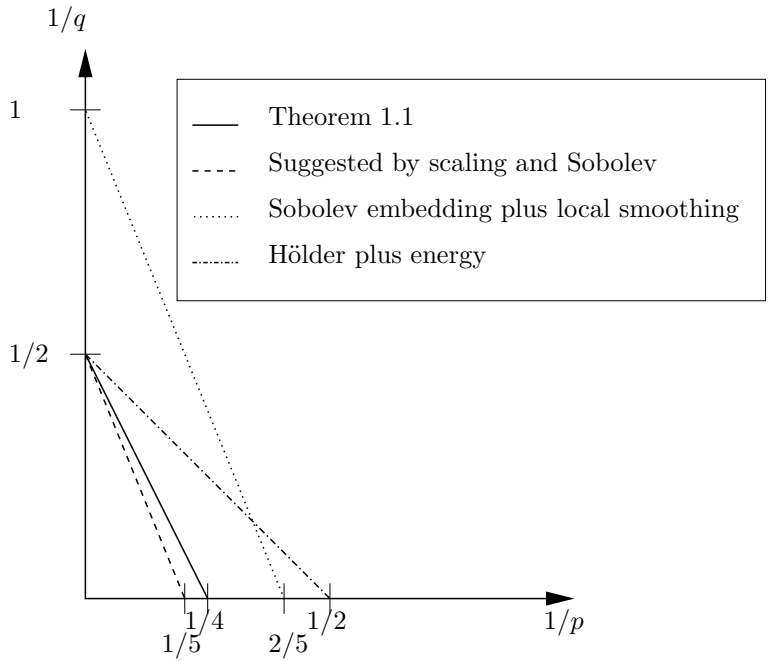}}
\caption{\label{F:fig1} Fixed time scale, $1/p$ derivative loss.  The $(p,q)$ relation given in Theorem \ref{Main1}, 
that suggested by scaling, from Sobolev embeddings plus
local smoothing effect, and from H\"older's inequality in time with Sobolev embeddings and energy conservation.}
\hfill
\end{figure}

On the semiclassical time scale $0 \leq t \leq 2^{-j/2}T$, 
our Strichartz estimate \eqref{E:main-est-sc} has a smaller loss in derivative, 
and the optimal scaling condition is the same as \eqref{E:p-q}.  Since the local smoothing cannot be improved 
on the semiclassical time scale, our estimate \eqref{E:main-est-sc} represents 
a larger gain over what Sobolev embeddings plus local smoothing could
tell us on the semiclassical time scale (see Figure \ref{F:fig3}).
\begin{figure}
\hfill
\centerline{\input{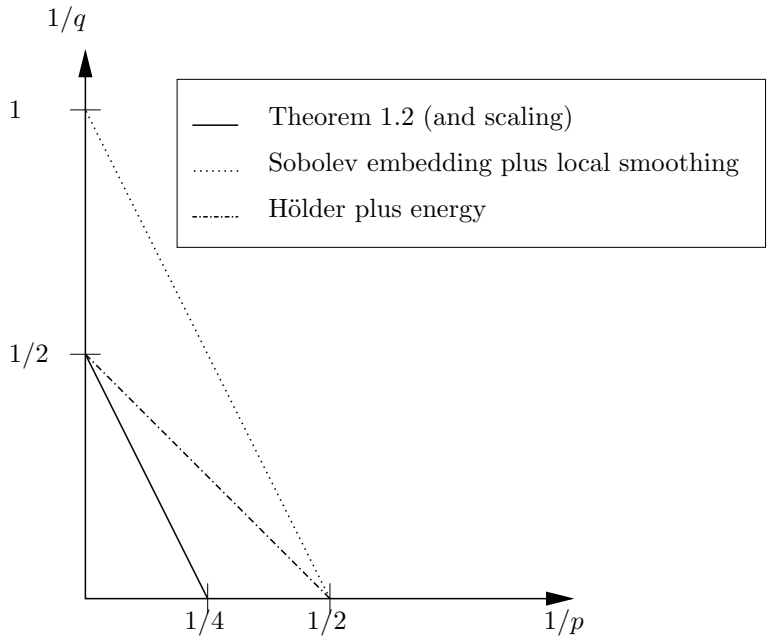}}
\caption{\label{F:fig3} Semiclassical  time scale, $1/2p$ derivative
  loss.  The $(p,q)$ relation given in Theorem
  \ref{Main2} (agrees with that suggested by scaling), 
Sobolev embeddings plus
local smoothing, and H\"older's inequality in time plus Sobolev
embeddings plus energy conservation.}
\hfill
\end{figure}


\subsection{Idea of the proofs}\label{SS:idea-str}
While \eqref{E:main} is dispersive, its nonlinearity is severe,
and as such in the study of its dispersive properties one must take its nonlinear effect into account. 
To better understand the strength of nonlinearity versus the weakness of dispersion 
we examine the local smoothing effect for \eqref{E:main}.
An application of Parseval's formula, together with a change of variables, shows that (\cite{KPV1} for instance) 
the solution of the linear homogeneous equation 
\begin{equation}\label{E:ww0}
\partial_t^2u-\frac{S}{2}H\partial_\alpha^3u=0, \qquad S>0
\end{equation}
gains $1/4$ derivative of smoothness over the initial data. 
An application of a $TT^*$ argument then shows that the solution 
of the corresponding inhomogeneous equation gains $2$ derivatives of smoothness 
over the inhomogeneity. But, this local smoothing effect is not enough 
to control nonlinear terms in \eqref{E:main} containing more than two spatial derivatives, 
e.g. $2u \partial_t \partial_\alpha u$.

To overcome this setback and to obtain Strichartz estimates for the solution of 
nonlinear equation \eqref{E:main}, we write it as  
\begin{equation}\label{E:main1}
\partial^2_tu -\frac{S}{2}H\partial_\alpha^3 u+gH\partial_\alpha u
+2u\partial_t\partial_\alpha u+u^2\partial_\alpha^2u=R(u, \partial_tu).\end{equation}
That is, we view $2u\partial_t\partial_\alpha u$ and $u^2\partial^2_\alpha u$
as ``linear'' components of the equation, but with variable coefficients 
which happen to depend on the solution itself.
In other words, we reduce the size of nonlinearity 
at the expense of making its linear part more complicated. 
We then make a serious effort to establish Strichartz estimates for the linear operator 
\begin{equation}\label{E:wwlinear}
\partial^2_t -\frac{S}{2}H\partial_\alpha^3 +gH\partial_\alpha 
+2V(t,\alpha)\partial_t\partial_\alpha +V^2(t,\alpha)\partial_\alpha^2,
\end{equation}
for a class of functions for the variable coefficient $V(t,\alpha)$.

The operator \eqref{E:wwlinear} may be thought of 
the operator $\partial_t^2-H\partial_\alpha^3$ perturbed by 
variable-coefficient but lower-order terms
$2V(t,\alpha)\partial_\alpha\partial_t+V^2(t,\alpha)\partial_\alpha^2$.
While the added terms are of lower order they are not constant,
and they bring a great deal of difficulty in the analysis of the paper,
which is the heart of the matter. 

In \cite{ABZ-water} and in Appendix \ref{S:local_smoothing}, 
in order to establish the local smoothing effect for the nonlinear equation \eqref{E:main},
similar approaches are employed. 


\subsubsection{Construction of the parametrix}\label{ss:parametrix}
Our approach to establishing microlocal Strichartz estimates for \eqref{E:wwlinear}
is based on the construction of its approximate solution. 

When $V(t,\alpha)=0$, the solution of the homogeneous equation \eqref{E:ww0} 
is given by the formula 
\begin{equation}\label{E:ww-oscillatory0}
\begin{split} 
u(t,\alpha)=\frac{1}{4\pi} \iint e^{i(\alpha-\beta)\xi} 
\Big((e^{it|\xi|^{3/2}}&+e^{-it|\xi|^{3/2}} ) u_0(\beta) \\
&+\frac{e^{it|\xi|^{3/2}}-e^{-it|\xi|^{3/2}} }{i|\xi|^{3/2}}u_1(\beta)\Big) d\beta d\xi,
\end{split}
\end{equation}
where $u_0$ and $u_1$ describe the initial data. 
Here, for the sake of exposition, we have assumed $S/2=1$ and $g=0$. 
Motivated by this, we make an oscillatory integral ansatz
\begin{equation*}
w(t,\alpha)  = \frac{1}{2 \pi} \iint e^{-i\beta \xi}( e^{i \varphi^+
(t,\alpha ,\xi) } f^+(\beta) + e^{i \varphi^-(t,\alpha ,\xi) } f^-(\beta) ) \, d \beta d \xi
\end{equation*}
to solve the problem associated to \eqref{E:wwlinear}.
The {\it phase functions} $\varphi^\pm$ is chosen to satisfy 
$\varphi^\pm(0,\alpha,\xi) = \alpha \xi$, and as such 
the recovery of the initial conditions entails solving for $f^\pm$ 
a system of elliptic pseudodifferential equations.

Applying the linear operator \eqref{E:wwlinear} to our ansatz, we consider 
the worst terms, produced when first-order derivatives fall on the phase functions. 
They make a first-order nonlinear equation \eqref{phi-hj-eqn} for $\varphi^\pm$, 
commonly referred to as the {\it eikonal} or {\it Hamilton-Jacobi} equation.
The usual approach to solving the Hamilton-Jacobi equation is 
through the technique of generating functions for the associated Hamiltonian. 
The equation \eqref{phi-hj-eqn} is, however, neither homogeneous nor polyhomogeneous (in
$\varphi_t^\pm$ and $\varphi^\pm_\alpha$),
and as such solutions are found on a time scale comparable to $|\xi|^{-1/2}$.
See Lemma \ref{HJ-lemma} for details.
We thus construct phase functions for each dyadic frequency band $|\xi| \sim 2^j$ on a
frequency-dependent time scale $t \sim 2^{-j/2}$.
The construction of the {\em leading-order parametrix} $w$ is detailed in Section \ref{S:parametrix}.

\subsubsection{Semiclassical Strichartz estimates}
We explain our strategy to establish Strichartz estimates 
for the linearized water-wave operator \eqref{E:wwlinear} under surface tension. 

Let us first discuss basic ideas for  Strichartz estimates 
for the one-dimensional free Schr\"odinger equation 
\begin{equation}\label{E:Schrodinger} 
i \partial_t u+\partial_\alpha^2 u=0, \qquad t, \alpha \in \reals 
\end{equation}
since we will use similar ideas. 
Prescribed with the initial condition $u(0,\alpha)=u_0(\alpha)$,
the solution of \eqref{E:Schrodinger} can be written via the Fourier transform as 
\[
u(t, \alpha) = \iint e^{i\xi(\alpha - \beta)} e^{it \xi^2} u_0(\beta) d \beta d \xi.
\]
We write this as a convolution with an integral kernel as
\[
u(t,\alpha) = \int K(t, \alpha, \beta) u_0 (\beta) d \beta, \qquad \text{where}\quad
K(t,\alpha, \beta) = \int  e^{i\xi(\alpha - \beta)} e^{it \xi^2} d \xi.
\]

The phase function $\varphi(\xi; t,\alpha, \beta) = \xi(\alpha - \beta) + t\xi^2$ has a critical point at 
\[
\partial_\xi \varphi(\xi_c) = \alpha - \beta + 2t \xi_c = 0, \quad \text{or} \quad \xi_c = (\beta - \alpha)/2t.
\]
Since $\partial_\xi^2 \varphi (\xi_c)= 2t$, moreover, the phase is nondegenerate for $t>0$. 
Then, by the standard method of stationary phase\footnote{
Estimate \eqref{E:dispersion-Schrodinger} is usually derived 
from the explicit formula for the kernel $K$, but here we want to stress a method 
that can be generalized for variable coefficient dispersive
differential operators.}, at least for $u_0$ localized in frequency, 
 we obtain $K = K_1 + \text{(smoothing)}$ with
\begin{equation}\label{E:dispersion-Schrodinger}
| K_1( t, \alpha, \beta) | \leq C t^{-1/2},
\end{equation}
where $C>0$ is independent of $t$, $\alpha$ and $\beta$. 

Next, we recall an abstract result which follows from the work of Ginibre-Velo \cite{GiVe-sm,GiVe-str} and recorded in the paper of Keel and Tao \cite{KT-str},
stating that a dispersion estimate leads to Strichartz estimates under the $L^p_tL^q_\alpha$-norm
for a range of $(p,q)$ depending on the strength of the dispersion 
(the power of $t$ in the dispersion estimate).

\begin{theorem}
\label{T:KT}
Let $(X, dx)$ be a measure space, let $H$ be a Hilbert space, and let
$U(t): H \to L^2(X)$ be a linear operator satisfying
\begin{align*}
& \text{\em  (i) }\; \| U(t) f \|_{L^2_x} \leq C_1 \| f \|_{H}, \text{ and}\\
& \text{\em  (ii) } \|U(t') U^*(t) g \|_{L^\infty_x} \leq C_1 |t-t'|^{- \sigma} \| g \|_{L^1_x}
\end{align*}
for some $\sigma >0$.  Then for every pair $(p,q)$ satisfying
\[
\frac{1}{p} + \frac{\sigma}{q} = \frac{ \sigma }{2},
\]
the estimate
\[
\| U(t) f \|_{L^p_t L^q_x} \leq C_2 \| f \|_H
\]
holds true, where $C_2>0$ depends only on $C_1$, $\sigma$, $p$ and $q$.
\end{theorem}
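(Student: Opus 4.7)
The plan is to reduce the estimate to a bound on $TT^\ast$ and then unpack that bound using the two hypotheses. Concretely, by a standard duality argument, the Strichartz bound $\|U(t)f\|_{L^p_t L^q_x} \leq C_2 \|f\|_H$ is equivalent to the adjoint bound $\|\int U^\ast(s) g(s)\,ds\|_H \leq C_2\|g\|_{L^{p'}_t L^{q'}_x}$, which in turn follows (with constant $C_2^2$) from the bilinear estimate
\begin{equation*}
\Big|\iint \langle U(t) U^\ast(s) g(s), h(t)\rangle_{L^2_x}\, ds\, dt\Big|
\leq C_2^2 \|g\|_{L^{p'}_t L^{q'}_x} \|h\|_{L^{p'}_t L^{q'}_x}.
\end{equation*}
So I would aim to prove this bilinear form bound for the operator with kernel $U(t)U^\ast(s)$ acting from $L^{p'}_t L^{q'}_x$ to $L^p_t L^q_x$.

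The key input is a spatially uniform bound on $U(t)U^\ast(s)$. From hypothesis (i) applied to $U^\ast(s)$ (which has the same operator norm as $U(s)$ by duality) and composed with $U(t)$, we get $\|U(t)U^\ast(s)g\|_{L^2_x} \leq C_1^2\|g\|_{L^2_x}$. Hypothesis (ii) gives the dispersive bound $\|U(t)U^\ast(s)g\|_{L^\infty_x} \leq C_1|t-s|^{-\sigma}\|g\|_{L^1_x}$. Riesz–Thorin interpolation between these two endpoints yields, for every $q\in[2,\infty]$,
\begin{equation*}
\|U(t)U^\ast(s) g\|_{L^q_x} \leq C\, |t-s|^{-\sigma(1-2/q)} \|g\|_{L^{q'}_x}.
\end{equation*}

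Next I would integrate this pointwise-in-$(t,s)$ estimate in time. By Minkowski's inequality in the form $\|\int F(t,s)\,ds\|_{L^p_t L^q_x} \leq \|\int \|F(t,s)\|_{L^q_x} ds\|_{L^p_t}$, the bilinear form is controlled by
\begin{equation*}
\iint \frac{\|g(s)\|_{L^{q'}_x} \|h(t)\|_{L^{q'}_x}}{|t-s|^{\sigma(1-2/q)}}\, ds\, dt,
\end{equation*}
and this is precisely the setting of the one-dimensional Hardy–Littlewood–Sobolev inequality in the scalar functions $\|g(s)\|_{L^{q'}_x}$ and $\|h(t)\|_{L^{q'}_x}$. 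The HLS inequality bounds this by $\|g\|_{L^{p'}_t L^{q'}_x}\|h\|_{L^{p'}_t L^{q'}_x}$ provided the exponent of $|t-s|$ matches the scaling
\begin{equation*}
\sigma\Big(1 - \frac{2}{q}\Big) = 1 - \frac{2}{p'} = \frac{2}{p},
\end{equation*}
which rearranges to the stated admissibility condition $\frac{1}{p} + \frac{\sigma}{q} = \frac{\sigma}{2}$. The HLS applies as long as $0 < \sigma(1-2/q) < 1$ and $1 < p' < \infty$, which corresponds to the non-endpoint case $p > 2$ (equivalently $q$ strictly between the critical values).

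The main obstacle is the endpoint, when $\sigma(1-2/q) = 1$, since the classical Hardy–Littlewood–Sobolev inequality fails at that exponent and one must replace the scalar HLS step by a more delicate dyadic/bilinear real interpolation of the operators $(t,s) \mapsto U(t)U^\ast(s)$ restricted to dyadic ranges of $|t-s|$ — this is the content of the Keel–Tao argument proper. For the applications in this paper the non-endpoint case is what will be used (indeed the admissibility condition \eqref{E:p-q} has $q < \infty$), so the route above through $TT^\ast$, Riesz–Thorin and scalar HLS suffices, with the endpoint case invoked only as a citation to \cite{KT-str}.
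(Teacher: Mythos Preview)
Your proof is correct and is precisely the classical $TT^\ast$ argument of Ginibre--Velo: interpolate the $L^2\to L^2$ and $L^1\to L^\infty$ bounds on $U(t)U^\ast(s)$ to get an $L^{q'}\to L^q$ bound with decay $|t-s|^{-\sigma(1-2/q)}$, then apply Hardy--Littlewood--Sobolev in time under the scaling condition $\sigma(1-2/q)=2/p$, which is exactly the stated admissibility relation. The paper does not actually prove this theorem; it merely records it as a known result, citing Ginibre--Velo and Keel--Tao, so there is nothing further to compare --- your argument is the standard one being invoked, and your remark that the endpoint requires the finer Keel--Tao machinery is accurate but, as you note, not needed here.
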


The semiclassical dispersion estimate we prove in this paper depends
also on the semiclassical parameter $2^{-j}$.  A rescaling in time and
application of Theorem \ref{T:KT} gives the following semiclassical
Strichartz estimate theorem (see, for example, \cite[Theorem B.10]{EvZw}).

\begin{theorem}[Semiclassical Strichartz estimates]
\label{T:general-sc-str}
Let $(X, dx)$ be a measure space, let $h_0>0$ fixed, let $H$ be a Hilbert space, and let
$U(t): H \to L^2(X)$ be a linear operator satisfying
\begin{align*}
& \text{\em  (i) }\; \| U(t) f \|_{L^2_x} \leq C_1 \| f \|_{H}, \text{ and}\\
& \text{\em  (ii) } \|U(t') U^*(t) g \|_{L^\infty_x} \leq C_2 h^{-\mu}|t-t'|^{- \sigma} \| g \|_{L^1_x}
\end{align*}
for some $\sigma >0$ and all $0 < h \leq h_0$.  Then for every pair $(p,q)$ satisfying
\[
\frac{1}{p} + \frac{\sigma}{q} = \frac{ \sigma }{2},
\]
the estimate
\[
\| U(t) f \|_{L^p_t L^q_x} \leq C_3 h^{-\frac{\mu}{p \sigma}} \| f \|_H
\]
holds true, where $C_3>0$ depends only on $C_1$, $C_2$, $\sigma$, $\mu$, $p$ and $q$.
\end{theorem}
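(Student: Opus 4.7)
\medskip

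\noindent\textbf{Proof proposal.} The plan is to reduce Theorem \ref{T:general-sc-str} to Theorem \ref{T:KT} by a time rescaling that absorbs the semiclassical weight $h^{-\mu}$ in hypothesis (ii). Concretely, set $\lambda = h^{-\mu/\sigma}$ and define the rescaled family
\[
\tilde U(s) := U(\lambda s) = U(h^{-\mu/\sigma} s).
\]
The choice of $\lambda$ is dictated by the scaling $|t-t'|^{-\sigma} = \lambda^{-\sigma}|s-s'|^{-\sigma}$, so that $h^{-\mu}\lambda^{-\sigma} = 1$.

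First I would verify the two hypotheses of Theorem \ref{T:KT} for $\tilde U$. The energy estimate is immediate: $\|\tilde U(s)f\|_{L^2_x} = \|U(\lambda s)f\|_{L^2_x} \leq C_1\|f\|_H$. For the dispersive estimate, a direct computation gives
\[
\|\tilde U(s')\tilde U^*(s) g\|_{L^\infty_x}
= \|U(\lambda s')U^*(\lambda s) g\|_{L^\infty_x}
\leq C_2\, h^{-\mu}\lambda^{-\sigma}|s-s'|^{-\sigma}\|g\|_{L^1_x}
= C_2|s-s'|^{-\sigma}\|g\|_{L^1_x},
\]
with constants independent of $h \in (0,h_0]$. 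Thus $\tilde U$ satisfies the hypotheses of Theorem \ref{T:KT} with the same exponent $\sigma$ and with constants $C_1,C_2$ independent of $h$.

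Applying Theorem \ref{T:KT} to $\tilde U$ yields, for every admissible pair $(p,q)$ with $1/p + \sigma/q = \sigma/2$,
\[
\|\tilde U(s)f\|_{L^p_s L^q_x} \leq C_3'\|f\|_H,
\]
with $C_3'$ depending only on $C_1, C_2, \sigma, p, q$. To finish, I would undo the rescaling: changing variable $t = \lambda s$ with $ds = \lambda^{-1}\,dt = h^{\mu/\sigma}\,dt$ gives
\[
\|\tilde U(s)f\|_{L^p_s L^q_x}^p = \int \|U(t)f\|_{L^q_x}^p\, h^{\mu/\sigma}\,dt
= h^{\mu/\sigma}\|U(t)f\|_{L^p_t L^q_x}^p,
\]
so that $\|U(t)f\|_{L^p_t L^q_x} = h^{-\mu/(p\sigma)}\|\tilde U(s)f\|_{L^p_s L^q_x} \leq C_3\, h^{-\mu/(p\sigma)}\|f\|_H$, with $C_3 = C_3'$. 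This is precisely the claimed inequality and exhibits the expected power of $h$.

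There is no real obstacle here; the only thing to get right is the exponent bookkeeping, namely that the dispersive weight $h^{-\mu}$ must be matched by the Jacobian factor $\lambda^{-\sigma}$ and that the inverse Jacobian of the final change of variables then produces exactly $h^{-\mu/(p\sigma)}$. The admissibility relation $1/p + \sigma/q = \sigma/2$ is scale-invariant under this time dilation, so it is unaffected by the reduction and is inherited directly from Theorem \ref{T:KT}.
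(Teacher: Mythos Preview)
Your proof is correct and takes essentially the same approach as the paper, which merely states that the result follows from ``a rescaling in time and application of Theorem \ref{T:KT}'' with a reference to \cite[Theorem B.10]{EvZw}. Your choice of dilation factor $\lambda = h^{-\mu/\sigma}$ and the subsequent bookkeeping of the Jacobian factor are exactly what that one-line remark unpacks to.
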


In light of the above theorem, \eqref{E:dispersion-Schrodinger} gives that 
a solution of \eqref{E:Schrodinger} satisfies the estimate
\[ \|u\|_{L^p_tL^q_\alpha} \leq C\|u_0\|_{L^2}\]
where $(p,q)$ satisfies
\[
\frac{2}{p} + \frac{1}{q} = \frac{1}{2}.
\]
Furthermore, a scaling argument assures that the estimate is sharp.

\

Returning to our setting, we consider the linear problem
\begin{equation}\label{E:linear1}
\begin{cases}
\partial_t^2 U- H\partial_\alpha^3 U +  2V(t,\alpha) \partial_\alpha \partial_t U
+ V^2(t,\alpha)\partial_\alpha^2 U = R(t,\alpha)  \\
U(0,\alpha)= U_0(\alpha)\quad \text{and} \quad \partial_t U(0,\alpha) = U_1(\alpha),
\end{cases}
\end{equation}
on the time scale $[0, 2^{-j/2} T]$,\footnote{Here and elsewhere in the paper, the interval $[0, 2^{-j/2}T]$ can be substituted for any interval $I$ of length $2^{-j/2}T$ contained in the domain of $V$, by shifting $t$ to the beginning of the interval $I$.}
where $U, R, U_0$ and $U_1$ are localized to the dyadic frequency band 
$2^{j-2} \leq |\xi| \leq 2^{j+2}$. Here, for simplicity we take $S/2=1$ and $g=0$.
We write the oscillatory integrals 
\begin{equation}\label{E:ww-oscillatory}
\iint e^{\pm i t | \xi |^{3/2}} e^{i \xi (\alpha - \beta)} (U_0(\beta)
\mp i | \xi |^{-3/2} U_1(\beta)) d \beta d \xi 
\end{equation}
associated to the solution of the zero-coefficient equation, $V(t,\alpha)=0$.

Considering the corresponding phase for $\xi$ large and positive\footnote{
To avoid the singularity in the phase at $\xi = 0$, we assume our
initial data are localized to high frequencies.}, $t>0$, and with the $+$ sign,
let $\varphi(\xi; t, \alpha, \beta)=\xi(\alpha-\beta)+t\xi^{3/2}$. 
Its critical point $\xi_c$ is at
\begin{equation}\label{E:phixi-lin}
\partial_\xi \varphi(\xi_c) = \frac{3}{2} t \xi_c^{1/2} + \alpha - \beta = 0, \quad 
\text{or}\quad \xi_c=\frac49\left( \frac{\beta-\alpha}{t}\right)^2.
\end{equation}
Since
\begin{equation*}\label{E:phixixi-lin}
\partial_\xi^2 \varphi(\xi_c) = \frac{9}{8} \left( \frac{t^2}{ \beta - \alpha } \right),
\end{equation*}
the critical point is nondegenerate for $t>0$
and we are ready to use the method of stationary phase.
However, plugging these results into the stationary phase argument
does not yield a bound on the kernel uniform in $\alpha$ or $\beta$.

To remedy this,  we use propagation of singularities 
to estimate $(\alpha - \beta)/t$ in terms of derivatives in $\beta$ 
to obtain a dispersion rate of $t^{-1/2}$ with loss in derivative.
But, our parametrix is for the operator \eqref{E:wwlinear} with variable coefficients, 
which is considerably more complicated than \eqref{E:ww-oscillatory}. 
Moreover, the parametrix exists only for times $t \sim \xi^{-1/2}$, so we can only obtain this estimate on semiclassical time scales.
This approach is taken in \cite{BaCh-str, BGT-comp, Chr-disp-1, StTa, Tat-str1,Tat-str2,Tat-str3}
and many others, for the wave equations and the Schr\"odinger equations. 

In the proof of the microlocal dispersion estimate 
Lemma \ref{disp-lemma}, for the range of times $t \sim \xi^{-1/2}$, 
where $\xi$ is localized in a dyadic band $\xi \sim 2^j$, 
the relation \eqref{E:phixi-lin} implies that $(\alpha - \beta)/t$ is
bounded by $2^{j/2}$, and as a consequence,
the kernel corresponding to \eqref{E:ww-oscillatory} decays like $2^{j/4}t^{-1/2}$
on such a time scale. This decay rate explains the admissibility condition \eqref{E:p-q} in
the main result.  The loss in derivative comes from taking $\mu =
1/4$, $\sigma = 1/2$, and $h = 2^{-j}$ in Theorem \ref{T:general-sc-str}.

In Theorem \ref{T:general-str}, we use Theorem \ref{T:general-sc-str} to deduce 
the semiclassical Strichartz estimates for \eqref{E:linear1} as 
\begin{equation}\label{E:short-str1}
\| U \|_{L^p([0, 2^{-j/2}T]) L^{q}_\alpha} \leq
C (\| U_0 \|_{H^{1/2p}_\alpha} + \| U_1 \|_{H^{ 1/2p- 3/2}_\alpha}
 + \| R \|_{L^1([0, 2^{-j/2}T]) {H^{1/2p-3/2}_\alpha}}),
\end{equation}
where $(p,q)$ satisfies \eqref{E:p-q} and 
$C>0$ depend on $p,q$ and the Sobolev norms of $V$.



\subsubsection{Adding up the dyadic blocks and from linear to nonlinear}
We give  a brief outline of the argument that  allows us to move from  Theorem \ref{T:general-str} 
to Theorem \ref{Main1} and Theorem \ref{Main2}. 

Let us divide the interval $[0,T]$ into $2^{j/2}$ small intervals of the size $2^{-j/2}T$.
We apply \eqref{E:short-str1} on each short interval of size $2^{-j/2}T$
and we simply sum up $2^{j/2}$ many small time-scale estimates. 
In doing so we introduce an additional loss of $1/2p$ derivative. 
Then by appealing to Littlewood-Paley theory we sum up dyadic frequencies
to assert Corollary \ref{C:homog-str}.
We only pause here to remark that the parametrix is constructed only for high frequencies;
low frequencies can be estimated via energy estimates. 

In order to prove the estimate for the nonlinear equation \eqref{E:main1},
we employ the energy method to establish local existence and uniqueness of 
the solution of \eqref{E:main1} in the Sobolev classes. It is detailed in Section \ref{S:LWP}. 
Applying $\partial_\alpha^s$ to \eqref{E:main1}, we arrive at the linear equation 
\[
\partial^2_t \partial_\alpha^s u -\frac{S}{2}H\partial_\alpha^3\partial_\alpha^s u
+gH\partial_\alpha \partial_\alpha^s u
+2u\partial_t\partial_\alpha \partial_\alpha^s u+u^2\partial_\alpha^2 \partial_\alpha^s u
=\tilde R(u, \partial_tu)
\]
for $\partial_\alpha^su$, where $\tilde{R}$ is a collection of lower-order terms. 
By setting $u=V(t,\alpha)$  and 
$\tilde R(u,\partial_t u)=R(t,\alpha)$, and applying the above result, 
we assert that $\partial_\alpha^s u$ satisfies the estimates of Corollary \ref{C:homog-str}. 
As a consequence of uniqueness then 
$u$ satisfies Strichartz estimates as in the Theorem \ref{Main1}.

Theorem \ref{Main2} is obtained by repeating the argument above about how to move from 
linear to nonlinear problem via energy method to the Strichartz estimate \eqref{E:short-str1} 
for the dyadic-frequency localization. 

We finally remark that Theorem \ref{Main2} does not imply Theorem \ref{Main1} 
since frequency localization of the initial data is lost due to the presence of the nonlinearity.


\subsection{Organization}
The article consists of three main parts. 

The first part is to formulate the hydrodynamic problem of water waves under surface tension
as a nonlinear dispersive equation.
In Section \ref{S:formulation} we recall the formulation in \cite{AM1} of the water-wave problem. 
In Section \ref{S:reformulation} the system is further formulated as a second-order in time nonlinear
dispersive equation weakly coupled to a transport-type equation.

The second part concerns the semiclassical Strichartz estimates 
for the linearized water-wave equation under surface tension.
In Section \ref{S:parametrix} we construct a high-frequency parametrix
for each dyadic frequency band and on the frequency-dependent time scale.
In Section \ref{SS:energy} we prove that the parametrix possesses semiclassical Strichartz estimates.

The third part concerns results for the nonlinear problem. In Section \ref{S:LWP},
the local-in-time existence and uniqueness is established via the energy method. 
Finally, Section \ref{S:strichartz_nonlinear} presents the proof of
the Strichartz estimates for the nonlinear problem.

Appendix \ref{S:local_smoothing} contains a proof of the local smoothing
effect for \eqref{E:main} via the method of positive commutators, 
suggested to us by T. Alazard, N. Burq, and C. Zuily.  Appendices \ref{A:proofs} -\ref{A:energy} 
collect miscellaneous calculations in the course of the paper 
and linear energy estimates.

\newcommand{\U}{U^\bot}
\newcommand{\V}{U^\parallel}

\section{The hydrodynamic problem of surface water-waves}\label{S:formulation}

Recorded here is the approach taken in \cite{AM1} of the formulation of the water-wave problem 
when surface tension is acted on.
The idea is to employ a favorable parametrization of the moving surface
and choose convenient dependent variables. 

Throughout the paper, partial differentiation is represented 
either by the symbol $\partial$ or by subscript.
The complex plane $\mathbb{C}$ is identified with the real two-dimensional space $\mathbb{R}^2$,
whenever it is convenient to do so, 
via the mapping $\Phi: \mathbb{R}^2 \to \mathbb{C}$, $\Phi(x,y)=x+iy$. 
The conjugate of a complex number $z$ is denoted by $\bar{z}$. 

\subsection{The evolution of the moving surface and the vorticity strength}\label{SS:formulation}
The equation of the moving surface is written as $(x(t,\alpha),y(t,\alpha))$, where
$\alpha \in \mathbb{R}$ is the parametrization of the curve, and 
$\Phi(x(t,\alpha),y(t,\alpha))=z(t,\alpha)$. Let 
\[
s_\alpha^2=x_\alpha^2+y_\alpha^2 \quad \text{and}\quad 
\theta=\arctan (y_\alpha/x_\alpha)
\]
denote, respectively, the square of the arc length and 
the tangent angle that the curve forms with the horizontal direction.
The unit tangent and normal vectors of the curve are 
$\hat{\mathbf{t}}=(\cos \theta, \sin \theta)$ and $\hat{\mathbf{n}}=(-\sin \theta, \cos \theta)$,
respectively. 

The evolution equations of the moving surface are written 
\[
\partial_t (x,y)=\V\hat{\mathbf{t}} + \U\hat{\mathbf{n}}.
\]
In other words, $\V$ is the tangential velocity and $\U$ is the normal velocity of the moving surface.
Accordingly, 
\begin{align*}
\partial_t s_\alpha  = \partial_\alpha \V -\U \partial_\alpha \theta,  \qquad
\partial_t \theta =\frac{1}{s_\alpha} \partial_\alpha \U + \frac{\V}{s_\alpha} \partial_\alpha \theta,
\end{align*}
respectively. By insisting\footnote{
The normal velocity $\U$ is determined by the equations of motion,
while the tangential velocity $\V$ only serves to reparametrize the moving surface.
Adding an arbitrary tangential velocity does not change the shape of the surface, and thus
one may choose the tangential velocity to satisfy a certain condition.}
$\partial_t s_\alpha =0$, and furthermore, $s_\alpha=1$ 
for each $t \in \mathbb{R}_+$ and $\alpha \in \mathbb{R}$, 
we regard the evolution equation of the moving surface as 
\begin{equation}\label{E:theta}
\partial_t\theta =\partial_\alpha \U + \V\partial_\alpha \theta,
\end{equation}
where $\V$ is determined by solving $\partial_\alpha \V = \U\partial_\alpha \theta$.
Such a (renormalized) {arclength parametrization} is assumed initially, 
and the choice of tangential velocity will guarantee that the parametrization is maintained at later time.

\

Describing the dynamics on the moving surface, 
we employ the idea of vortex sheets in the two-fluid system, and we suppose that 
the interface separating the vacuum from the fluid
moves with different velocities along the tangential direction of the interface.

Let $\phi^{\pm}$ represent the velocity potentials of the upper and the lower fluids, respectively, 
and let $\rho^\pm$ be the densities of the upper and the lower fluids
and $p^\pm$ be the corresponding pressures. 
The Euler equations in the vacuum and the fluid region take the form 
\begin{equation}\label{E:bernoulli}
\partial_t \phi^\pm +\frac{1}{2}|\nabla \phi^\pm|^2+\frac{p^\pm}{\rho^\pm}=0,
\end{equation}
and the boundary conditions at the interface are written as
\[
[\nabla \phi^\pm]\cdot \hat{\mathbf{n}}=0 \quad \text{and} \quad [p]=S\partial_\alpha \theta,
\]
where $[\cdot]$ represents the jump of the quantity across the interface. 
We note that the arclength parametrization of the interface offers 
a particularly succinct  expression of the mean curvature.

Let $\gamma$ denote the {\em vortex sheet strength}\footnote{
The flow is irrotational. The vorticity, however, has a singular distribution supported on the interface.
The vortex sheet strength then measures concentration of vorticity along the interface.}. 
Introducing the Birkhoff-Rott integral\footnote{
In the recovery of the velocity from the vorticity distribution, 
we employ the Biot-Savart law to derive an integral expression, 
the limit of which at the interface is the Birkhoff-Rott integral.}
\begin{equation}\label{E:W}
\overline{\Phi}(\mathbf{W})(\alpha)=\frac{1}{2\pi i}\text{PV} \int^\infty_{-\infty} 
\frac{\gamma(\alpha')}{z(\alpha)-z(\alpha')} d\alpha',
\end{equation}
we express the limiting value of velocity at the interface as
\[
\nabla \phi^\pm (t, \Phi^{-1}(z)(t,\alpha))=
\mathbf{W}(t,\alpha) \pm \frac{1}{2}\gamma(t,\alpha) \hat{\mathbf{t}}.
\]
On the other hand, 
$\partial_t (x,y)=\mathbf{W}+(\V-\mathbf{W}\cdot \hat{\mathbf{t}})\hat{\mathbf{t}}$ and  
$\U=\mathbf{W}\cdot \hat{\mathbf{n}}$.

By combining Bernoulli's equation \eqref{E:bernoulli} with the boundary conditions
at the interface and by using the above notations, we derive the evolution equation of $\gamma$ 
\begin{equation}\label{E:gamma}
\partial_t \gamma=S\partial_\alpha^2 \theta
+\partial_\alpha((\V-\mathbf{W}\cdot \hat{\mathbf{t}})\gamma)
-2\mathbf{W}_t \cdot \hat{\mathbf{t}}-\frac{1}{2}\gamma\partial_\alpha\gamma
+2(\V-\mathbf{W}\cdot \hat{\mathbf{t}}) \mathbf{W}_\alpha \cdot \hat{\mathbf{t}}.
\end{equation}
The development is detailed in \cite[Appendix B]{AM1}.

In summary, the water-wave problem consists of \eqref{E:theta} and \eqref{E:gamma}.
A useful feature of the formulation is that surface tension enters the equation in the linear fashion.

\subsection{The system for the tangent angle and the modified tangent velocity}
\label{SS:system}
The choice of tangential velocity $\V$ produces in \eqref{E:gamma} 
nonlinear terms involving $\V-\mathbf{W}\cdot \hat{\mathbf{t}}$.
In order to express these terms in a more convenient way, 
we introduce the {\em modified tangential velocity}
\begin{equation}\label{D:u}
u=\frac{1}{2}\gamma-(\V-\mathbf{W}\cdot \hat{\mathbf{t}}),
\end{equation}
and we rewrite the system \eqref{E:theta} and \eqref{E:gamma}
in terms of $\theta$ and $u$, instead of $\gamma$.
Physically interpreted, $u$ measures the difference between 
the Lagrangian tangential velocity $\mathbf{W}\cdot \hat{\mathbf{t}} +\frac{1}{2}\gamma$ 
and tangential velocity $\V$ which guarantees arclength parametrization.
Once $(x(t,\alpha), y(t,\alpha))$ is given, the mapping $\gamma \mapsto u$ is one-to-one.

The first step is to approximate $\mathbf{W}$ in terms of the Hilbert transform. 
By expanding $\overline{\Phi}(\mathbf{W})$ in the Taylor fashion, one obtains
\begin{align*}
\overline{\Phi}(\mathbf{W})(\alpha)=& 
\frac{1}{2\pi i} \text{PV} \int^\infty_{-\infty}
\frac{\gamma(\alpha')}{z_\alpha(\alpha')(\alpha-\alpha')}d\alpha' \\
&+\frac{1}{2\pi i} \int^\infty_{-\infty} \gamma(\alpha')\left( \frac{1}{z(\alpha)-z(\alpha')}
-\frac{1}{z_\alpha(\alpha')(\alpha-\alpha')}\right)d\alpha'\\
:=&\frac{1}{2i}H\left(\frac{\gamma}{z_\alpha}\right)+\mathcal{K}[z]\gamma.
\end{align*}
Note that $\mathcal{K}[z]\gamma$ is not singular as the singularities 
in the expression of $\mathcal{K}[z]$ cancel.
Moreover, $\mathcal{K}[z]$ has the ``smoothing'' property
\begin{equation}\label{Es:K[z]}
\|\mathcal{K}[z]f\|_{H^s} \leq C(\|\theta\|_{H^{s+1-n}})\|f\|_{H^n} \qquad 
\text{for $s\geq 1$ and $n= 0,1$.}\end{equation}
The proof is very similar to that of \cite[Lemma 3.5]{Am}, and hence it is omitted.
The commutator operator
\[
[H, h]f(\alpha)=\frac{1}{\pi}\int^\infty_{-\infty} 
f(\alpha') \frac{h(\alpha')-h(\alpha)}{\alpha-\alpha'}d\alpha',
\]
has a similar smoothing property 
\begin{equation}\label{Es:H}
\|[H,h]f\|_{H^s}\leq C\|h\|_{H^{s+s'}}\|f\|_{H^{r-s'}}, \qquad
\text{for $s,s'\geq 0$ and $r>1/2$.}\end{equation}
The proof is found, for instance, in \cite[Lemma 2.14]{Yos1}.

The next step is to represent $\mathbf{W}_\alpha$ as
\begin{equation}\label{Es:W_alpha}
\mathbf{W}_\alpha \cdot\hat{\mathbf{n}} = \frac12H(\gamma_\alpha) +\mathbf{m}\cdot \hat{\mathbf{n}}
\quad\text{and}\quad
\mathbf{W}_\alpha \cdot\hat{\mathbf{t}} = 
-\frac12H(\gamma\theta_\alpha) +\mathbf{m}\cdot \hat{\mathbf{t}},
\end{equation}
where
\begin{equation}\label{D:m}
\overline{\Phi}(\mathbf{m})=z_\alpha \mathcal{K}[z] \left( 
\frac{\gamma_\alpha}{z_\alpha}-\frac{\gamma z_{\alpha\alpha}}{z_\alpha^2}\right)
+\frac{z_\alpha}{2i} \left[H, \frac{1}{z_\alpha^2}\right] 
\left(\gamma_\alpha-\frac{\gamma z_{\alpha\alpha}} {z_\alpha}\right).
\end{equation}
Indeed, by differentiating 
$\mathbf{W}=\frac{1}{2}H(\gamma \hat{\mathbf{n}}) +\text{(smooth remainder)}$
and using $\hat{\mathbf{n}}_\alpha=-\theta_\alpha \hat{\mathbf{t}}$ one obtains
\[
\mathbf{W}_\alpha = \frac{1}{2} H(\gamma_\alpha)\hat{\mathbf{n}}
-\frac{1}{2}H(\gamma \theta_\alpha)\hat{\mathbf{t}} +\text{(smooth remainder)}.
\]
The detailed calculation is found in \cite[Section 2.2]{Am}.

Using the results above, finally,
\eqref{E:theta} and \eqref{E:gamma} are written as 
\begin{subequations}\label{E:system}
\begin{align}
\partial_t u&=\frac{S}{2}\partial_\alpha^2 \theta -g\theta -u\partial_\alpha u 
+\partial_\alpha^{-1}(-r_2(t, \alpha)\partial_\alpha \theta
+(H\partial_\alpha u+r_1(t,\alpha))^2), \\
\partial_t \theta&=-u\partial_\alpha \theta+H\partial_\alpha u+r_1(t,\alpha),
\end{align}\end{subequations}
where
\begin{align}
r_1&(t,\alpha) = -H(\mathbf{m}\cdot \hat{\mathbf{t}} ) +\mathbf{m}\cdot \hat{\mathbf{n}}, \label{D:r_1} \\
r_2&(t,\alpha)=\mathbf{W}_t\cdot \hat{\mathbf{n}} 
+u\mathbf{W}_\alpha \cdot \hat{\mathbf{n}} +\frac12 \gamma \theta_t+\frac12\gamma u\theta_\alpha.
\label{D:r_2}
\end{align}
The detailed derivation is found in the proof of \cite[Proposition 2.1]{AM1}.

\subsection{Estimates for $r_1$ and $r_2$}
This subsection concerns the estimates of  the remainder terms in the system \eqref{E:system}.
We state the main result.

\begin{proposition}\label{P:remainders}
The remainders $r_1$ and $r_2$ in \eqref{D:r_1} and in \eqref{D:r_2}, respectively, satisfy
\begin{align}
\|&r_1\|_{H^s} \leq C(\|\theta\|_{H^2}, \|\theta\|_{H^{s+n}})(1+\|u\|_{H^{2-n}}) 
& & \text{for $s\geq 1$ and $n=0,1$}, \label{Es:r_1}\\
\|&r_2\|_{H^s} \leq C(\|\theta\|_{H^{s+2}})(1+\|u\|_{H^{s+1}})^2
& & \text{for $s\geq 1$.}\label{Es:r_2}\\
\intertext{Moreover, $r_2$ may be written as
$r_2=H\partial_t u +r_3,$ where}
\|&r_3\|_{H^s}  \leq C(\|\theta\|_{H^{s+1}})(1+\|u\|_{H^{s+1}})^2  & &\text{for $s\geq 1$}\label{Es:r_3}.
\end{align}
\end{proposition}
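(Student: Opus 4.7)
The plan is to reduce everything to estimates on the auxiliary vector field $\mathbf{m}$, the Birkhoff-Rott integral $\mathbf{W}$, and its derivatives, and then invoke the smoothing properties \eqref{Es:K[z]} and \eqref{Es:H} together with the Sobolev algebra property for $H^s$ with $s>1/2$. Throughout, since the parametrization is by arclength, $z_\alpha=e^{i\theta}$ and $z_{\alpha\alpha}=i\theta_\alpha e^{i\theta}$, so every quantity depending on $z$ is controlled by norms of $\theta$. The transformation \eqref{D:u} together with $\mathbf{W}\cdot\hat{\mathbf{t}} = -\tfrac12 H\gamma + (\text{smooth})$ allows us to pass back and forth between estimates in $\gamma$ and estimates in $u$, with the price always being at the level of lower-order smoothing corrections.

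For \eqref{Es:r_1}, since the Hilbert transform is bounded on $H^s$, it suffices to bound $\mathbf{m}$ in $H^s$. The two pieces of $\mathbf{m}$ in \eqref{D:m} are exactly of the form to which \eqref{Es:K[z]} and \eqref{Es:H} apply: the first is $z_\alpha\mathcal{K}[z](\cdots)$, and the second is $\tfrac{z_\alpha}{2i}[H,z_\alpha^{-2}](\cdots)$. The arguments in parentheses are linear combinations of $\gamma_\alpha$, $\gamma z_{\alpha\alpha}/z_\alpha$ and $\gamma z_{\alpha\alpha}/z_\alpha^2$, which via Sobolev algebra sit in $H^{n-1}$ (losing one derivative off $u$) or $H^n$ depending on whether one uses $n=0$ or $n=1$ in \eqref{Es:K[z]}. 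Choosing the allocation of derivatives in \eqref{Es:K[z]} and \eqref{Es:H} between $\theta$ and $u$ gives the two-index dependence on $n\in\{0,1\}$ in \eqref{Es:r_1}.

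For \eqref{Es:r_2}, the terms $\tfrac12\gamma\theta_t$ and $\tfrac12\gamma u\theta_\alpha$ are handled directly by substituting \eqref{E:theta} for $\theta_t$ and using the Sobolev algebra plus $\gamma = 2u + 2(\V-\mathbf{W}\cdot\hat{\mathbf{t}})$, with $\mathbf{W}\cdot\hat{\mathbf{t}}$ and $\V$ controlled by \eqref{Es:W_alpha} (integrated) and $\partial_\alpha\V = \U\partial_\alpha\theta$. The term $u\mathbf{W}_\alpha\cdot\hat{\mathbf{n}}$ uses \eqref{Es:W_alpha} directly, costing one derivative of $\gamma$ (hence of $u$) and bounded in $H^s$ by $C(\|\theta\|_{H^{s+1}})(1+\|u\|_{H^{s+1}})^2$. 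The main work is on $\mathbf{W}_t\cdot\hat{\mathbf{n}}$: differentiating the expansion $\overline{\Phi}(\mathbf{W}) = \tfrac{1}{2i}H(\gamma/z_\alpha) + \mathcal{K}[z]\gamma$ in $t$, the highest-order contribution is $\tfrac12 H\gamma_t$, and the remainders involve $\partial_t z_\alpha = \partial_\alpha(\partial_t z)$ (controlled by $\theta$ and $u$) acting on $\mathcal{K}[z]$ and on the commutator $[H,\cdot]$; these are all bounded via \eqref{Es:K[z]} and \eqref{Es:H}, costing up to two derivatives on $\theta$, which explains the $H^{s+2}$ norm appearing in \eqref{Es:r_2}.

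The decomposition $r_2=H\partial_t u + r_3$ and the improved bound \eqref{Es:r_3} are the delicate part and are what I expect to be the main obstacle. The idea is that the leading-order piece $\tfrac12 H\gamma_t$ of $\mathbf{W}_t\cdot\hat{\mathbf{n}}$ combines with $\gamma_t = 2\partial_t u + 2\partial_t(\V - \mathbf{W}\cdot\hat{\mathbf{t}})$ to produce exactly $H\partial_t u$ plus terms of the form $H\partial_t(\V - \mathbf{W}\cdot\hat{\mathbf{t}})$. One must then show that this last quantity, together with the contributions from the smoothing remainder $\mathcal{K}[z]\gamma$, from $\hat{\mathbf{n}}_t = -\theta_t\hat{\mathbf{t}}$, and from the $u\mathbf{W}_\alpha\cdot\hat{\mathbf{n}}$, $\tfrac12\gamma\theta_t$, $\tfrac12\gamma u\theta_\alpha$ terms, is bounded in $H^s$ by $C(\|\theta\|_{H^{s+1}})(1+\|u\|_{H^{s+1}})^2$; that is, that all contributions involving two derivatives of $\theta$ have cancelled. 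Concretely this requires expressing $\partial_t(\V - \mathbf{W}\cdot\hat{\mathbf{t}})$ and the time-differentiated $\mathcal{K}[z]$ remainders using \eqref{E:theta}--\eqref{E:gamma} to substitute $\theta_t$, $\gamma_t$, and verifying by Sobolev algebra that after this substitution each term carries at most one extra derivative on $\theta$ relative to $s$. This is the kind of bookkeeping that is done in \cite[Section 2]{AM1} and can be modeled on the argument there, with estimates \eqref{Es:K[z]}, \eqref{Es:H}, \eqref{Es:W_alpha} providing the sole analytic input.
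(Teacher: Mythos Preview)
Your proposal is correct and follows essentially the same route as the paper: bound $\mathbf{m}$ via the smoothing estimates \eqref{Es:K[z]}--\eqref{Es:H} for $r_1$, decompose $\mathbf{W}_t\cdot\hat{\mathbf{n}} = \tfrac12 H\gamma_t + (\text{lower order})$ for $r_2$, and then extract $H\partial_t u$ from $\tfrac12 H\gamma_t$ via $\gamma = 2u - 2(\V-\mathbf{W}\cdot\hat{\mathbf{t}})$ to isolate $r_3$. Two small points of comparison: first, the paper differentiates the Birkhoff--Rott integral \eqref{E:W} directly rather than the decomposed form $\tfrac{1}{2i}H(\gamma/z_\alpha)+\mathcal{K}[z]\gamma$, which produces four explicit remainders $R_1,\dots,R_4$ and makes the bookkeeping cleaner; second, the $\|\theta\|_{H^{s+2}}$ dependence in \eqref{Es:r_2} actually enters through $\tfrac12 H\gamma_t$ itself (via the $S\partial_\alpha^2\theta$ term in \eqref{E:gamma}), not through the smoothing remainders as you suggest---and obtaining the needed bound on $\gamma_t$ is not a direct substitution, since $\gamma_t$ appears implicitly on both sides of \eqref{E:gamma} through $\mathbf{W}_t\cdot\hat{\mathbf{t}}$, so one must invert the operator $1+2\mathcal{J}[z]$ (done in the paper's appendix, following \cite{Am}). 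This is the one step your outline genuinely skips, though your deferral to \cite{AM1} is appropriate.
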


Our result is related to that in \cite{AM1}, but with the important difference that 
here $S>0$ is held fixed whereas in \cite{AM1} the estimates are uniform as $S \to 0$. 

\

The remainder term $r_1$ involves ``smoothing'' operators 
$\mathcal{K}[z]$ and $[H, \frac{1}{z_\alpha^2}]$.
On account of \eqref{Es:K[z]} and \eqref{Es:H}, it follows that 
\begin{align}
\|\mathbf{m}\|_{H^s} &\leq C(\|\theta\|_{H^{s+n}}) \|\gamma\|_{H^{2-n}} & & 
\text{for $s\geq 1$ and $n=0,1$}.
\label{Es:m} \\
\intertext{Then, it is immediate that}
\|r_1\|_{H^s} &\leq C(\|\theta\|_{H^2}, \|\theta\|_{H^{s+n}}) \|\gamma\|_{H^{2-n}} & & 
\text{for $s\geq 1$ and $n=0.1$}.
\label{Es:r_1a}
\end{align}

We further estimate $r_1$ in terms of $u$ (instead of $\gamma$) and $\theta$. 
Below is the basic regularity property of $\gamma$.

\begin{lemma}\label{L:regularity_gamma}
Let $S>0$ be held fixed. 
For $s\geq 1$, if $\theta \in H^{s+1/2}$, $u \in H^s$ and $\gamma \in H^{s-1}$ 
then $\gamma \in H^s$ and 
\[
\|\gamma\|_{H^s} \leq C\|u\|_{H^s} + C(\|\theta\|_{H^{s+1/2}}).
\]
\end{lemma}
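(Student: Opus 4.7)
The strategy is to solve for $\gamma$ from the defining relation \eqref{D:u}, which rearranges to $\gamma = 2u + 2(\V - \mathbf{W}\cdot\hat{\mathbf{t}})$, and to argue that the auxiliary quantity $\V - \mathbf{W}\cdot\hat{\mathbf{t}}$ is one derivative smoother in $\alpha$ than $\gamma$ itself. Once this gain is established, the displayed formula provides elliptic regularity for $\gamma$: control of $u$ in $H^s$ promotes $\gamma$ from $H^{s-1}$ to $H^s$.

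The first step is to show that $\mathbf{W}\cdot\hat{\mathbf{t}}$ is one derivative smoother than $\gamma$. Identifying $\hat{\mathbf{t}}$ with $z_\alpha = e^{i\theta}$ under $\Phi$, one has $\mathbf{W}\cdot\hat{\mathbf{t}} = \Re\bigl(\overline{\Phi}(\mathbf{W})\, z_\alpha\bigr)$. Substituting the decomposition $\overline{\Phi}(\mathbf{W}) = \frac{1}{2i}H(\gamma/z_\alpha) + \mathcal{K}[z]\gamma$ from Section \ref{SS:system} and using the commutator identity $z_\alpha H(\gamma/z_\alpha) = H\gamma + [z_\alpha, H](\gamma/z_\alpha)$, the would-be leading contribution $H\gamma/(2i)$ is purely imaginary (since $H\gamma$ is real) and drops out upon taking the real part. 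What remains, $\mathbf{W}\cdot\hat{\mathbf{t}} = \Re\bigl(\tfrac{1}{2i}[z_\alpha, H](\gamma/z_\alpha)\bigr) + \Re(z_\alpha \mathcal{K}[z]\gamma)$, is controlled in $H^s$ by the smoothing estimates \eqref{Es:H} and \eqref{Es:K[z]}, yielding $\|\mathbf{W}\cdot\hat{\mathbf{t}}\|_{H^s} \leq C(\|\theta\|_{H^{s+1/2}})\|\gamma\|_{H^{s-1}}$.

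Next I would estimate $\V$. A parallel computation using the complex identification $\hat{\mathbf{n}} \leftrightarrow i z_\alpha$ gives $\U = \mathbf{W}\cdot\hat{\mathbf{n}} = \tfrac{1}{2}H\gamma + (\text{one-derivative smoother})$. Since $\partial_\alpha \V = \U\,\partial_\alpha\theta$, a Kato-Ponce-type product estimate with multiplier $\theta_\alpha \in H^{s-1/2} \hookrightarrow L^\infty$ (for $s > 1$) produces $\|\U\,\theta_\alpha\|_{H^{s-1}} \leq C(\|\theta\|_{H^{s+1/2}})\|\gamma\|_{H^{s-1}}$, and integrating (with the constant of integration fixed by the asymptotic behavior at $\alpha = \pm \infty$ inherited from the arclength gauge) gives $\|\V\|_{H^s} \leq C(\|\theta\|_{H^{s+1/2}})\|\gamma\|_{H^{s-1}}$. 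Substituting back into $\gamma = 2u + 2(\V - \mathbf{W}\cdot\hat{\mathbf{t}})$ yields $\|\gamma\|_{H^s} \leq 2\|u\|_{H^s} + C(\|\theta\|_{H^{s+1/2}})\|\gamma\|_{H^{s-1}}$. The stated inequality then follows by induction on $s$ starting from a low-regularity base case that bounds $\|\gamma\|_{L^2}$ in terms of $\theta$ and $u$, or equivalently by absorbing the $\|\gamma\|_{H^{s-1}}$ factor into the $\theta$-dependent constant using such a base case.

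The main obstacle is the careful bookkeeping of Sobolev indices at the threshold $\theta \in H^{s+1/2}$: verifying that the one-derivative gain promised by the cancellation of the leading $H\gamma$ term actually survives when the parameters $s'$ and $r$ in \eqref{Es:H} are chosen to match the precise regularity of $z_\alpha = e^{i\theta}$. A secondary difficulty is the endpoint $s = 1$, where $\theta_\alpha \in H^{1/2}$ only barely fails to be in $L^\infty$ and the product estimate becomes borderline; this likely requires a tame commutator workaround or a slightly separate argument for the base case of the induction.
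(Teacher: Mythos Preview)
Your approach is correct in spirit and close to the paper's, but the paper takes a shorter path that sidesteps the two difficulties you flag at the end. Rather than estimating $\V$ and $\mathbf{W}\cdot\hat{\mathbf{t}}$ separately, the paper differentiates the defining relation once and uses the Frenet identity $\hat{\mathbf{t}}_\alpha = \theta_\alpha\hat{\mathbf{n}}$ to obtain the exact cancellation
\[
\partial_\alpha(\V - \mathbf{W}\cdot\hat{\mathbf{t}}) \;=\; \U\,\theta_\alpha - \mathbf{W}_\alpha\cdot\hat{\mathbf{t}} - (\mathbf{W}\cdot\hat{\mathbf{n}})\,\theta_\alpha \;=\; -\,\mathbf{W}_\alpha\cdot\hat{\mathbf{t}},
\]
so that, via \eqref{Es:W_alpha}, one has the identity $\partial_\alpha\gamma = 2\partial_\alpha u + H(\gamma\,\theta_\alpha) - 2\,\mathbf{m}\cdot\hat{\mathbf{t}}$, and the estimate follows directly from \eqref{Es:m}. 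This buys two things over your argument: (i) the term $\U\,\theta_\alpha$ that forced you to anti-differentiate disappears, so there is no $\partial_\alpha^{-1}$ step and no question of whether $\V$ itself lies in $L^2$; (ii) the commutator cancellation you found for $\mathbf{W}\cdot\hat{\mathbf{t}}$ is replaced by the ready-made decomposition \eqref{Es:W_alpha} and the smoothing bound \eqref{Es:m} on $\mathbf{m}$.

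Your route can be made to work, but the step ``integrating gives $\|\V\|_{H^s} \leq C(\|\theta\|_{H^{s+1/2}})\|\gamma\|_{H^{s-1}}$'' is the genuine weak point: $\partial_\alpha\V \in H^{s-1}$ does not by itself place $\V$ in $L^2$, and you would need a separate argument (decay of $\U\theta_\alpha$ at infinity, or working modulo constants) to control the antiderivative. Differentiating first, as the paper does, avoids this entirely and also makes the borderline $s=1$ bookkeeping no worse than estimating the product $\gamma\,\theta_\alpha$ in $L^2$.
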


Indeed, the definition of $u$ and \eqref{Es:W_alpha} yield that
$\partial_\alpha \gamma = 2\partial_\alpha u+H(\gamma \partial_\alpha \theta) 
-2\mathbf{m}\cdot \hat{\mathbf{t}}.$
The assertion then follows from \eqref{Es:m}.

The estimate \eqref{Es:r_1} finally follows by 
combining \eqref{Es:r_1a} with Lemma \ref{L:regularity_gamma}. 

A consequence of \eqref{Es:m} is that  $u=\frac12 \gamma+\text{(lower order terms)}$,
which is useful in the future consideration.

\begin{corollary}\label{C:regularity_V-Wt}
For $s\geq 1$, if $\theta \in H^{s+1/2}$, $u \in H^s$ and $\gamma \in H^{s-1}$ 
then $\V-\mathbf{W}\cdot \hat{\mathbf{t}} \in H^s$ and 
\[
\|\V-\mathbf{W}\cdot \hat{\mathbf{t}}\|_{H^s} \leq C(\|\gamma\|_{H^1}) \|u\|_{H^{s-1}}
+C(\|\theta\|_{H^s}).
\]
\end{corollary}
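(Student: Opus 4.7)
By the definition \eqref{D:u}, one has the algebraic identity $\V - \mathbf{W}\cdot\hat{\mathbf{t}} = \tfrac{1}{2}\gamma - u$. The naive triangle inequality combined with Lemma \ref{L:regularity_gamma} would yield only $C\|u\|_{H^s} + C(\|\theta\|_{H^{s+1/2}})$, one derivative short of the claim. To recover the missing derivative I would exploit a cancellation at the level of the $\alpha$-derivative: the identity $\partial_\alpha \gamma = 2\partial_\alpha u + H(\gamma \partial_\alpha \theta) - 2\mathbf{m}\cdot\hat{\mathbf{t}}$ that underlies the proof of Lemma \ref{L:regularity_gamma} produces, after the $\partial_\alpha u$ contributions cancel,
\[
\partial_\alpha(\V - \mathbf{W}\cdot\hat{\mathbf{t}}) = \tfrac{1}{2}H(\gamma \partial_\alpha \theta) - \mathbf{m}\cdot\hat{\mathbf{t}}.
\]
Here $u$ enters only through $\gamma$, and only linearly, so smoothing-type bounds become available.

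I would then split $\|\V - \mathbf{W}\cdot\hat{\mathbf{t}}\|_{H^s}$ into an $L^2$ part, dominated crudely by $\|\gamma\|_{L^2} + \|u\|_{L^2} \leq \|\gamma\|_{H^1} + \|u\|_{H^{s-1}}$ (using $s\geq 1$), and the $H^{s-1}$-norm of the displayed right-hand side. For the Hilbert-transform piece, the $H^{s-1}$-boundedness of $H$ reduces matters to bounding $\|\gamma \partial_\alpha \theta\|_{H^{s-1}}$. A Moser/Kato--Ponce product inequality allocates the derivatives as
\[
\|\gamma \partial_\alpha \theta\|_{H^{s-1}} \leq C\|\gamma\|_{L^\infty}\|\partial_\alpha \theta\|_{H^{s-1}} + C\|\partial_\alpha \theta\|_{L^\infty}\|\gamma\|_{H^{s-1}};
\]
I would control $\|\gamma\|_{L^\infty} \leq C\|\gamma\|_{H^1}$ and $\|\partial_\alpha\theta\|_{L^\infty} \leq C(\|\theta\|_{H^s})$ by the 1D Sobolev embedding, and then trade the remaining factor $\|\gamma\|_{H^{s-1}}$ for $C\|u\|_{H^{s-1}} + C(\|\theta\|_{H^s})$ via Lemma \ref{L:regularity_gamma} applied at order $s-1$. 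For the remainder $\mathbf{m}\cdot\hat{\mathbf{t}}$, the smoothing estimate \eqref{Es:m} with $n=1$ gives $\|\mathbf{m}\|_{H^{s-1}} \leq C(\|\theta\|_{H^s})\|\gamma\|_{H^1}$; the unit vector $\hat{\mathbf{t}}=(\cos\theta,\sin\theta)$ is a smooth function of $\theta$ that is absorbed into $C(\|\theta\|_{H^s})$.

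The main obstacle is the low-regularity endpoint $s$ close to $1$, where $H^{s-1}$ is not an algebra and Lemma \ref{L:regularity_gamma} cannot be invoked at order $s-1$ directly. To close this range I would instead rely on the one-dimensional fractional Leibniz rule (valid for all $s-1\geq 0$) and bound $\mathbf{m}$ by returning to its kernel representation \eqref{D:m} and applying the tame estimates \eqref{Es:K[z]} and \eqref{Es:H} directly, bypassing \eqref{Es:m} at these small Sobolev orders.
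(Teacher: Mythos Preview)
Your approach is essentially the same as the paper's. The paper's one-line proof ``The assertion follows at once from $\partial_\alpha (\V-\mathbf{W}\cdot \hat{\mathbf{t}})=-\mathbf{W}_\alpha \cdot \hat{\mathbf{t}}$'' lands, via \eqref{Es:W_alpha}, on exactly the identity you derive, $\partial_\alpha(\V-\mathbf{W}\cdot\hat{\mathbf{t}})=\tfrac12 H(\gamma\theta_\alpha)-\mathbf{m}\cdot\hat{\mathbf{t}}$; you reach it instead by differentiating $\V-\mathbf{W}\cdot\hat{\mathbf{t}}=\tfrac12\gamma-u$ and invoking the formula for $\partial_\alpha\gamma$ from the proof of Lemma~\ref{L:regularity_gamma}, which is an equivalent route. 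Your subsequent product and smoothing estimates, and your remark on the care needed near $s=1$, simply make explicit what the paper leaves implicit.
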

The assertion follows at once from $\partial_\alpha (\V-\mathbf{W}\cdot \hat{\mathbf{t}})=
-\mathbf{W}_\alpha \cdot \hat{\mathbf{t}}$.

\

The estimates for $r_2$ are more involved. 
Using \eqref{Es:W_alpha} and (\ref{E:system}b) we write
\begin{equation}\label{E:r_2_again} 
\begin{split}
r_2(t,\alpha)=\mathbf{W}_t \cdot\hat{\mathbf{n}} 
&+u\left(\frac12H(\gamma_\alpha)+\mathbf{m}\cdot \hat{\mathbf{n}} \right) \\
&+\frac12 \gamma(-u\theta_\alpha+Hu_\alpha+r_1(t,\alpha))+\frac12\gamma u\partial_\alpha \theta.
\end{split}\end{equation}
Much of our effort to estimate $r_2$ goes to show that 
the principal part of $\mathbf{W}_t \cdot \hat{\mathbf{n}}$,
and subsequently, the principal part of $r_2$ is $H\partial_t u$.

\begin{lemma}[Calculation of $\mathbf{W}_t \cdot \hat{\mathbf{n}}$]\label{L:W_t}
For $s\geq 1$, we have
\begin{align}
\|&\mathbf{W}_t \cdot \hat{\mathbf{n}}\|_{H^s} \leq C(\|\theta\|_{H^{s+2}}) +C(1+\|u\|_{H^{s+1}})^2
\label{Es:W_t}, \\
\Big\|&\mathbf{W}_t \cdot \hat{\mathbf{n}} -\frac12H(\gamma_t)\Big\|_{H^s}
\leq C(\|\theta\|_{H^{s+1}})+C(1+\|u\|_{H^{s+1}})^2. \label{Es:W_t_res}
\end{align}
\end{lemma}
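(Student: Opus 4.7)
The plan is to start from the decomposition used earlier in Section \ref{SS:system},
\[
\overline{\Phi}(\mathbf{W}) = \frac{1}{2i}H\!\left(\frac{\gamma}{z_\alpha}\right) + \mathcal{K}[z]\gamma,
\]
which is equivalent to $\mathbf{W} = \tfrac12 H(\gamma\hat{\mathbf{n}}) + (\text{smooth remainder})$, and to differentiate it in $t$. Using $\hat{\mathbf{n}}_t = \theta_t \hat{\mathbf{t}}$ we get
\[
\mathbf{W}_t = \tfrac12 H(\gamma_t \hat{\mathbf{n}}) + \tfrac12 H(\gamma\,\theta_t \hat{\mathbf{t}}) + \partial_t\bigl(\text{smooth remainder}\bigr).
\]
Taking the normal component and using $\hat{\mathbf{n}}\cdot\hat{\mathbf{n}}=1$, $\hat{\mathbf{t}}\cdot\hat{\mathbf{n}}=0$, together with \eqref{Es:H} to pull the scalar components of $\hat{\mathbf{n}}$ and $\hat{\mathbf{t}}$ past $H$, identifies the principal part
\[
\mathbf{W}_t\cdot\hat{\mathbf{n}} = \tfrac12 H(\gamma_t) + (\text{strictly smoother remainder}),
\]
which is the structural content of \eqref{Es:W_t_res}.

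To estimate the remainder in $H^s$, I would bound each commutator piece by \eqref{Es:H}, each $\mathcal{K}[z]$-piece by \eqref{Es:K[z]}, and each $\mathbf{m}$-type term by \eqref{Es:m}. The time derivative of $\mathcal{K}[z]$ produces factors of $z_t$ and $z_{t\alpha}$; since $z_t = \V\hat{\mathbf{t}} + \U\hat{\mathbf{n}}$ with $\U = \mathbf{W}\cdot\hat{\mathbf{n}}$ and $\partial_\alpha \V = \U\partial_\alpha\theta$, Corollary \ref{C:regularity_V-Wt} together with \eqref{Es:W_alpha} yield $\|z_t\|_{H^{s+1}} \leq C(\|\theta\|_{H^{s+1}})(1+\|u\|_{H^{s+1}})$, and similarly for $\theta_t$ using equation (\ref{E:system}b). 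Because every commutator and every application of $\mathcal{K}[z]$ gains one spatial derivative, all of these remainder terms can be absorbed into $C(\|\theta\|_{H^{s+1}})+C(1+\|u\|_{H^{s+1}})^2$, establishing \eqref{Es:W_t_res}.

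For the stronger estimate \eqref{Es:W_t}, I would add back $\tfrac12 H(\gamma_t)$ and bound it directly. Differentiating the defining identity $u = \tfrac12\gamma - (\V - \mathbf{W}\cdot\hat{\mathbf{t}})$ in $t$ gives $\gamma_t = 2u_t + 2\partial_t(\V - \mathbf{W}\cdot\hat{\mathbf{t}})$, and substituting (\ref{E:system}a) for $u_t$ produces
\[
\gamma_t = S\,\partial_\alpha^2 \theta - 2g\theta + (\text{lower-order polynomial in }u,\theta,\gamma\text{ and their derivatives}).
\]
Applying Lemma \ref{L:regularity_gamma} and Corollary \ref{C:regularity_V-Wt} to the lower-order piece bounds $\|\gamma_t\|_{H^s}$ by $C(\|\theta\|_{H^{s+2}})+C(1+\|u\|_{H^{s+1}})^2$, and combining with \eqref{Es:W_t_res} yields \eqref{Es:W_t}.

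The main obstacle is the bookkeeping of the remainder: one must verify that every piece produced by differentiating $\mathcal{K}[z]\gamma$ in $t$ and by projecting $H(\gamma\hat{\mathbf{n}})_t$ onto $\hat{\mathbf{n}}$ contains either a commutator $[H,h]$ or a factor of $\mathcal{K}[z]$, thereby gaining precisely the one spatial derivative required to push all $\theta$-norms down to $H^{s+1}$ in the refined estimate. The algebraic observation that makes this gain consistent is that $\partial_t$ applied to any quantity built from $(z,\theta)$ generates at most one $\partial_\alpha$ of $\theta$ via (\ref{E:system}b), so the $z$-differentiation never competes with the two $\partial_\alpha$'s of $\theta$ hidden inside $\gamma_t$ through (\ref{E:system}a).
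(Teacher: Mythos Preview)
Your decomposition strategy for \eqref{Es:W_t_res} is the same as the paper's in spirit: extract $\tfrac12 H(\gamma_t)$ as the principal part and show everything else is one derivative smoother. The paper organizes this differently---it writes $\mathbf{W}_t\cdot\hat{\mathbf{n}} = \mathrm{Re}(iz_\alpha\overline{\Phi}(\mathbf{W}_t))$ and differentiates the Birkhoff--Rott principal-value integral directly, producing four explicit remainders $R_1,\dots,R_4$ (commutator-type, $\mathcal{K}[z]$-type, and two kernel-differentiated terms involving $z_t$ and $z_{\alpha t}$)---but your real-vector route via $\mathbf{W}=\tfrac12 H(\gamma\hat{\mathbf{n}})+\Phi^{-1}(\overline{\mathcal{K}[z]\gamma})$ produces equivalent pieces. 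Note, though, that even for \eqref{Es:W_t_res} the commutator terms you generate when projecting $H(\gamma_t\hat{\mathbf{n}})$ onto $\hat{\mathbf{n}}$, e.g.\ $[H,\sin\theta](\gamma_t\sin\theta)$, and the term $\mathcal{K}[z]\gamma_t$ inside $\partial_t(\mathcal{K}[z]\gamma)$, still contain $\gamma_t$; the smoothing properties \eqref{Es:H} and \eqref{Es:K[z]} reduce this to needing only $\|\gamma_t\|_{L^2}$, but you do need that bound.

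The genuine gap is your route to $\|\gamma_t\|_{H^s}$. Differentiating $u=\tfrac12\gamma-(\V-\mathbf{W}\cdot\hat{\mathbf{t}})$ and substituting (\ref{E:system}a) is circular: the right side of (\ref{E:system}a) contains $r_2$, which by \eqref{D:r_2} contains $\mathbf{W}_t\cdot\hat{\mathbf{n}}$ itself, and $\partial_t(\V-\mathbf{W}\cdot\hat{\mathbf{t}})$ contains $\mathbf{W}_t\cdot\hat{\mathbf{t}}$. Lemma~\ref{L:regularity_gamma} and Corollary~\ref{C:regularity_V-Wt} do not apply to these time-differentiated quantities. The paper avoids the circularity by going back to the \emph{original} evolution equation \eqref{E:gamma} for $\gamma$ (before the change of variable to $u$), observing that the only implicit $\gamma_t$-dependence there enters through $\mathbf{W}_t\cdot\hat{\mathbf{t}}=\mathcal{J}[z]\gamma_t+R_5$ with $\mathcal{J}[z]$ a smoothing operator, and then inverting the bounded operator $1+2\mathcal{J}[z]$ on $L^2$ to obtain \eqref{Es:gamma_t}. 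Your approach can be rescued by the same mechanism (the reappearing $\mathbf{W}_t$ terms do come with a regularity gain), but not by the lemmas you cite.
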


\begin{proof}
By writing $\mathbf{W}_t$ in terms of $\gamma$ and $\mathcal{K}[z]$, we have 
\begin{alignat*}{2}
\mathbf{W}_t \cdot \hat{\mathbf{n}} =& \text{Re}(iz_\alpha \overline{\Phi}&&(\mathbf{W}_t)) \\
=& \text{Re} \Big( \frac{1}{2\pi}   && \hspace{-.05in} z_\alpha (\alpha) \text{PV} 
\int^\infty_{-\infty} \frac{\gamma_t (\alpha')}{z(\alpha)-z(\alpha')} d\alpha' 
 \Big) \\
& &&  \hspace{-.05in}- \text{Re} \left( \frac{1}{2\pi}z_\alpha(\alpha) \text{PV} 
\int^\infty_{-\infty} \gamma(\alpha') \frac{z_t(\alpha)-z_t(\alpha')}{(z(\alpha)-z(\alpha'))^2} d\alpha' \right) \\
=& \frac12 H(\gamma_t) &&  \hspace{-.05in}-\text{Re}\left(\frac{1}{2\pi}\text{PV}
\int^\infty_{-\infty} \frac{\gamma_t(\alpha')}{z_\alpha(\alpha')} 
\frac{z_\alpha(\alpha)-z_\alpha(\alpha')}{\alpha-\alpha'} d\alpha'\right) \\
& && \hspace{-.05in} +\text{Re}(iz_\alpha(\alpha) \mathcal{K}[z]\gamma_t) \\
& && \hspace{-.05in}+\text{Re}\left(\frac{1}{2\pi} z_\alpha(\alpha) \text{PV}
\int^\infty_{-\infty} \frac{\gamma(\alpha')}{z_\alpha(\alpha')} z_{\alpha t}(\alpha') 
\frac{1}{z(\alpha)-z(\alpha')} d\alpha'\right) \\
& && \hspace{-.05in}-\text{Re} \left( \frac{1}{2\pi} z_\alpha(\alpha) \text{PV} 
\int^\infty_{-\infty} \partial_{\alpha'} \left( \frac{\gamma(\alpha')}{z_\alpha(\alpha')}\right) 
\frac{z_t(\alpha)-z_t(\alpha')}{z(\alpha)-z(\alpha')} d\alpha'\right)\\
:=&  \frac12 H(\gamma_t) && \hspace{-.05in} +R_1+R_2+R_3+R_4.
\end{alignat*}
In the proof, we examine each $R_j$, $j=1,2, 3, 4$, separately.

In order to estimate $R_1$, we simplify the expression by introducing 
\[
q_3(\alpha, \alpha')=\frac{z_\alpha(\alpha)-z_\alpha(\alpha')}{\alpha-\alpha'} 
=\int^1_0 z_{\alpha\alpha}(\tau \alpha+(1-\tau)\alpha') d\tau.
\]
It is standard (see \cite{BHL}, for instance) that 
$$\|q_3\|_{H^{s-1}_\alpha}, \|q_3\|_{H^{s-1}_{\alpha'}}\leq C(\|\theta\|_{H^s}).$$
The Minkowski inequality and the Fubini theorem then apply to yield that
\begin{align*}
\int^\infty_{-\infty} |\partial_\alpha^s R_1(\alpha)|^2 d\alpha
\leq& C\int^\infty_{-\infty} \int^\infty_{-\infty} \left|\frac{\gamma_t(\alpha')}{z_\alpha(\alpha')}\right|^2
| \partial_\alpha^s q_3(\alpha, \alpha')|^2 d\alpha d\alpha' \\ 
\leq & C(\|\theta\|_{H^{s+1}}) \|\gamma_t\|_{L^2}^2,
\end{align*}
whence 
\[
\|R_1\|_{H^s} \leq C(\|\theta\|_{H^{s+1}}) \|\gamma_t\|_{L^2}.
\]

The smoothing property of $\mathcal{K}[z]$ in \eqref{Es:K[z]} implies to yield a similar estimate
\[
\|R_2\|_{H^s} \leq C(\|\theta\|_{H^{s+1}})\|\gamma_t \|_{L^2}.
\]
Further, in Appendix \ref{A:proofs} it is shown that
\begin{equation}\label{Es:gamma_t}
\|\gamma_t\|_{H^s}\leq C(\|\theta\|_{H^{s+2}})+C\|u\|_{H^{s+1}}\quad \text{for $s\geq 0$.}
\end{equation} 
By the above estimate for $s=0$, then, it follows that  
\[\| R_1\|_{H_s}, \|R_2\|_{H^s} \leq C(\|\theta\|_{H^{s+1}})+C\|u\|_{H^{s+1}} 
\qquad \text{for $s \geq 1$}. \] 

Next, upon writing $\mathbf{W}$ in terms of the Hilbert transform and $\mathcal{K}[z]$, we have 
\[
R_3=-\text{Re}\left( \frac12 z_\alpha(\alpha)H\left( \frac{\gamma}{z_\alpha^2} z_{\alpha t}\right)
+iz_\alpha(\alpha) \mathcal{K}[z]\left(\frac{\gamma}{z_\alpha^2}z_{\alpha t}\right) \right),
\]
whence for $s \geq 1$ the following inequalities hold:
\begin{align*}
\|R_3\|_{H^s} & \leq C(\|\theta\|_{H^s})\left(\left\| \frac{\gamma}{z_\alpha^2} z_{\alpha t}\right\|_{H^s}
+C(\|\theta\|_{H^{s+1}}) \left\| \frac{\gamma}{z_\alpha^2}z_{\alpha t}\right\|_{L^2}\right) \\
&\leq C(\|\theta\|_{H^{s+1}}) (\|\gamma\|_{H^s} \|\theta_t \|_{H^s}
+\|\gamma\|_{H^1}\|\theta_t\|_{L^2}) \\
&\leq C(\|\theta\|_{H^{s+1}}) (1+\|u\|_{H^{s+1}})^2.
\end{align*}
The last inequality uses (\ref{E:system}b). Indeed, 
\[
\|\theta_t\|_{H^s} \leq C\|u\|_{H^{s+1}}+C(\|\theta\|_{H^{s+1}}) \qquad \text{for $s\geq 1$}.
\]

In order to estimate $R_4$, similarly, we write 
\begin{equation*}
\begin{split}
R_4=\text{Re}\Big( \frac12 z_\alpha[H, z_t]& \Big(\frac{1}{z_\alpha} \partial_\alpha
\Big(\frac{\gamma}{z_\alpha}\Big)\Big) \\
&+iz_\alpha z_t \mathcal{K}[z]\left(\partial_\alpha\left( \frac{\gamma}{z_\alpha}\right)\right) 
-z_\alpha \mathcal{K}[z]\left( z_t \partial_\alpha \left(\frac{\gamma}{z_\alpha}\right)\right) \Big).
 \end{split}
 \end{equation*}
We claim that 
\begin{equation}\label{Es:z_t}
\|z_t\|_{H^s} \leq C\|u\|_{H^s} +C(\|\theta\|_{H^{s+1}}) \quad \text{for $s \geq 1$}.
\end{equation}
To see this, we write 
\[
z_t
=\Phi\left((\mathbf{W}\cdot \hat{\mathbf{n}}) \hat{\mathbf{n}}
+(\mathbf{W}\cdot \hat{\mathbf{t}}) \hat{\mathbf{t}} 
+(\V-\mathbf{W}\cdot \hat{\mathbf{t}}) \hat{\mathbf{t}}\right).
\]
By \eqref{Es:K[z]} and the result of Corollary \ref{C:regularity_V-Wt} then follows
\begin{align*}
\|& \mathbf{W}\|_{H^s} \leq C\|\gamma\|_{H^s} +C(\|\theta\|_{H^{s+1}})\|\gamma\|_{H^1}, \\
\|&\V-\mathbf{W}\cdot \hat{\mathbf{t}}\|_{H^s} \leq 
C(\|\gamma\|_{H^1})\|u\|_{H^{s-1}}+C(\|\theta\|_{H^s})
\end{align*}
for $s \geq 1$, which proves the claim.
With \eqref{Es:z_t} immediately follows that
\begin{align*}
\|R_4\|_{H^s} &\leq C(\|\theta\|_{H^{s+1}})\left( 
\|z_t\|_{H^s} \left\| \partial_\alpha \left(\frac{\gamma}{z_\alpha}\right) \right\|_{H^1} 
+ \left\|z_t \partial_\alpha \left(\frac{\gamma}{z_\alpha}\right)\right\|_{L^2}\right)\\
&\leq C(\|\theta\|_{H^{s+1}})(1+\|u\|_{H^2}+\|u\|_{H^s})^2.
\end{align*}

Finally, combining estimates for $R_1$ through $R_4$ yields that
\[
\|R_1+R_2+R_3+R_4\|_{H^s} \leq C(\|\theta\|_{H^{s+1}})+C(1+\|u\|_{H^{s+1}})^2.
\]
This together with \eqref{Es:gamma_t}  asserts \eqref{Es:W_t} and \eqref{Es:W_t_res}.
\end{proof}

Returning to the estimate of $r_2$, we estimate
terms in \eqref{E:r_2_again} other than $\mathbf{W}_t \cdot \hat{\mathbf{n}}$
in the usual way by using \eqref{Es:m}, and therefore \eqref{Es:r_2} follows.
To establish \eqref{Es:r_3}, we write $r_2=H\partial_tu +r_3$, where 
\[
r_3=\partial_t(\V-\mathbf{W}\cdot \hat{\mathbf{t}})+R_1+R_2+R_3+R_4.
\]
Since 
\[\partial_t \partial_\alpha (\V-\mathbf{W}\cdot \hat{\mathbf{t}}) =
-\frac12H(\gamma_t\theta_\alpha)-\frac12H(\gamma\theta_{\alpha t})
+\partial_t(\mathbf{m}\cdot \hat{\mathbf{t}}),
\]
it follows \eqref{Es:r_3}. This completes the proof of Proposition \ref{P:remainders}.

We end this subsection with estimates of the time derivatives of $\mathcal{K}[z]f$ and $[H,h]f$,
which will be useful in the following section.


\begin{corollary}\label{C:(K[z]f)_t}
For $s \geq 1$ we have
\begin{align}
\|\partial_t(\mathcal{K}[z]f)\|_{H^s} \leq &C(\|\theta\|_{H^{s+1}})(1+\|u\|_{H^{s+1}}
+\|f\|_{H^s}+\|\partial_t f\|_{L^2}), \label{Es:K_t} \\
\|\partial_t[H,h]f\|_{H^s} \leq & \|\partial_t h\|_{H^s}\|f\|_{H^1}+\|h\|_{H^{s+1}}\|\partial_t f\|_{L^2} 
\label{Es:H_t}
\end{align}
for $s \geq 1$. 
\end{corollary}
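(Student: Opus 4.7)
The second estimate for $\partial_t[H,h]f$ follows directly from the bilinearity of $[H,h]f$ in $h$ and $f$. Indeed,
\[
\partial_t [H,h]f = [H,\partial_t h]f + [H,h]\partial_t f,
\]
and applying \eqref{Es:H} with $s' = 0$, $r = 1$ to the first term, and with $s' = 1$, $r = 1$ to the second, gives exactly the claimed bound (up to a constant that can be absorbed).

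For the first estimate, I would start from the integral representation
\[
\mathcal{K}[z]f(\alpha) = \frac{1}{2\pi i}\int_{-\infty}^\infty f(\alpha')\left(\frac{1}{z(\alpha)-z(\alpha')}-\frac{1}{z_\alpha(\alpha')(\alpha-\alpha')}\right) d\alpha',
\]
and differentiate in $t$. The product rule produces one piece of the form $\mathcal{K}[z]\partial_t f$, which is controlled directly by \eqref{Es:K[z]} with $n=0$ to yield $C(\|\theta\|_{H^{s+1}})\|\partial_t f\|_{L^2}$, and a remaining piece in which $\partial_t$ falls on the kernel, producing
\[
-\frac{z_t(\alpha)-z_t(\alpha')}{(z(\alpha)-z(\alpha'))^2} + \frac{z_{\alpha t}(\alpha')}{z_\alpha^2(\alpha')(\alpha-\alpha')}.
\]

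The main technical point is that, viewed as an integral operator against $f$, this expression has the same singularity-cancellation structure as the original kernel of $\mathcal{K}[z]$: Taylor expanding $z_t(\alpha)-z_t(\alpha') \approx z_{\alpha t}(\alpha')(\alpha-\alpha')$ and $z(\alpha)-z(\alpha') \approx z_\alpha(\alpha')(\alpha-\alpha')$ shows the two leading poles cancel. Concretely, I would add and subtract $\frac{z_{\alpha t}(\alpha')(\alpha-\alpha')}{(z(\alpha)-z(\alpha'))^2}$ to rewrite the kernel as a sum of operators of the type $\mathcal{K}[z]$ and $[H,\cdot]$ already encountered in the proof of Lemma \ref{L:W_t}, with coefficients involving $z_t$ and $z_{\alpha t}$. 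Those pieces can then be estimated by repeating the argument used for $\|R_1\|_{H^s}$, $\|R_3\|_{H^s}$ and $\|R_4\|_{H^s}$, together with the Minkowski-Fubini trick based on the divided-difference kernel $q_3$, yielding bounds of the shape $C(\|\theta\|_{H^{s+1}})(1+\|z_t\|_{H^s})\|f\|_{H^s}$.

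Finally, inserting $\|z_t\|_{H^s} \leq C\|u\|_{H^s} + C(\|\theta\|_{H^{s+1}})$ from \eqref{Es:z_t} produces the stated bound. The main obstacle I anticipate is purely bookkeeping: organizing the differentiated kernel so that the cancellation is preserved and no extra derivative is forced onto $f$, exactly as in the argument originally used to justify \eqref{Es:K[z]} in \cite{Am, BHL}. Once that rearrangement is done, the estimates reduce to ones already established in Section~\ref{SS:system} and in Lemma~\ref{L:W_t}.
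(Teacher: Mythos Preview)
Your approach is essentially the paper's. For \eqref{Es:H_t} the paper simply invokes the product rule, exactly as you do. For \eqref{Es:K_t} the paper writes
\[
\partial_t(\mathcal{K}[z]f)=\mathcal{K}[z](\partial_t f)+\frac{1}{2i}H\!\left(\frac{f}{z_\alpha^2}z_{\alpha t}\right)-\frac{1}{2\pi i}\int f(\alpha')\,\frac{z_t(\alpha)-z_t(\alpha')}{(z(\alpha)-z(\alpha'))^2}\,d\alpha',
\]
which is precisely your decomposition once you observe that your second kernel piece $\dfrac{z_{\alpha t}(\alpha')}{z_\alpha^2(\alpha')(\alpha-\alpha')}$ integrated against $f$ \emph{is} the Hilbert transform term; no add-and-subtract is needed. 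The remaining integral is then handled by referring back to the $R_3$, $R_4$ estimates in Lemma~\ref{L:W_t}, as you anticipated, and \eqref{Es:z_t} closes the bound.
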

The proofs are in Appendix \ref{A:proofs}.

\begin{remark}[The dispersion relation]\label{SS:dispersion}\rm
We linearize \eqref{E:system} about a flat equilibrium $u=0$ and $\theta=0$ to obtain 
\[
\begin{cases}
\partial_t u=\frac{S}{2}\partial_\alpha^2 \theta -g\theta, \\
\partial_t \theta=H\partial_\alpha u.
\end{cases}
\]
By considering the plane-wave solution $u=\exp ik(\alpha-c(k)t)$, 
we arrive at the {\em dispersion relation}
\begin{equation*}\label{E:dispersion-r}
c(k)=\left(\frac{S}{2}|k|+\frac{g}{|k|}\right)^{1/2} \frac{k}{|k|},
\end{equation*}
where $c(k)$ is the phase velocity corresponding to the wave number $k$;
$\omega=c(k)k$ is the frequency.

Colloquially, when $S>0$, waves of high frequencies (short waves) propagate faster 
than waves of low frequencies (long waves). 
Broadening out the wave profile, it in consequence induces a certain ``smoothing effect''. 
When $S=0$, on the other hand, such a smoothing effect is not expected. 
Alternatively put, the above dispersion relation indicates that 
the linear system of surface water waves exhibits a regularizing effect
when the effects of surface tension are accounted for.

The present purpose is to quantitatively analyze such a smoothing effect 
in terms of integrability under the mixed Sobolev norms. 
\end{remark}

\section{Reformulation: the water-wave problem as a dispersive equation}\label{S:reformulation}

The formulation of the water-wave problem under surface tension ultimately takes 
the form of a second-order in time nonlinear dispersive equation, 
coupled with a transport-type equation. 

When both the effects of surface tension and gravity are present, $S>0$ and $g>0$, in \eqref{E:system}
the surface-tension term $\frac{S}{2}\partial_\alpha^2 \theta$ is of higher order
compared to the gravity term $g\theta$ and it dominates the linear dynamics.
For simplicity of exposition, thus, the effects of gravity are neglected and  
further the coefficient of the surface tension term is normalized so that 
\[g=0\quad \text{and} \quad \frac{S}{2}=1.\]

\subsection{Reduction to the dispersive equation}\label{SS:formulation-2}  
By differentiating (\ref{E:system}a) in the $t$-variable 
we obtain the {\em second-order} in time nonlinear dispersive equation
\begin{multline*}
 \partial_t^2u-H\partial_\alpha^3u=-2u\partial_\alpha \partial_t u-u^2\partial_\alpha^2 u
 -\partial_\alpha \theta \partial_\alpha^2 u \\
-3\partial_\alpha u\partial_t u - 3u(\partial_\alpha u)^2
+\partial_\alpha^2 r_1 +2u\partial_\alpha r_4+ur_4+r_5.
\end{multline*}
Here, $r_1$ is defined in \eqref{D:r_1}, and 
\begin{align}
r_4=&\partial_\alpha^{-1} (r_2(t,\alpha)\partial_\alpha \theta+
(H\partial_\alpha u+r_1(t,\alpha))^2),\label{D:r_4} \\
r_5=&\partial_\alpha^{-1}\partial_t(-H(\partial_t u)\partial_\alpha \theta
+r_3(t,\alpha) \partial_\alpha \theta+(H\partial_\alpha u+r_1(t,\alpha))^2). \label{D:r_5}
\end{align}
By \eqref{Es:r_1} and \eqref{Es:r_2} it follows that 
\begin{align}
\|r_4&\|_{H^s} \leq C(\|\theta\|_{H^{s+1}})(1+\|u\|_{H^s})^2 \qquad \text{for $s\geq 1$.}
\label{Es:r_4}
\end{align}

Further, the leading term of $\partial_\alpha^{-1}\partial_t(-H(\partial_t u) \partial_\alpha \theta)$ 
cancels -$\partial_\alpha \theta \partial_\alpha^2 u$ so that 
the highest-order nonlinear terms in the above equation do not involve $\theta$ explicitly.
Indeed, successive substitutions of $\partial_t u$ and $\partial_t \theta$ by \eqref{E:system} result in that
\begin{align*}
r_5=&-\partial_\alpha \theta H\partial_t(\partial_\alpha^2\theta -u\partial_\alpha u+r_4)за
-\partial_t\partial_\alpha \theta H(\partial_t u) \\ 
&+\partial_t r_3\partial_\alpha \theta
+r_3 \partial_t\partial_\alpha \theta +2(H\partial_\alpha u+r_1)
(H\partial_t \partial_\alpha u+\partial_t r_1) \\
=& \partial_\alpha \theta \partial_\alpha^2 u
-\partial_\alpha^{-1}(\partial_\alpha^2\theta \partial_\alpha^2 u) \\
&-\partial_\alpha^{-1}\Big((\partial_\alpha \theta) H(-2u\partial_\alpha \partial_t u
-u^2\partial_\alpha^2 u -\partial_\alpha\theta \partial_\alpha^2u \\
&\qquad \qquad \qquad \qquad -3\partial_\alpha u \partial_t u-3u\partial_\alpha^2 u
+\partial_t r_4 -u\partial_\alpha r_4-2r_4 \partial_\alpha u)\Big) \\
&+\partial_\alpha^{-1}(r_3-H\partial_t u)
(H\partial_\alpha^2 u-\partial_\alpha \theta\partial_\alpha u-u\partial_t u
-u^2\partial_\alpha u+\partial_\alpha r_1+ur_4 ) \\
&+\partial_\alpha^{-1}(\partial_\alpha \theta \partial_t r_3 )
+2\partial_\alpha^{-1}(H\partial_\alpha u+r_1)(H\partial_\alpha \partial_t u+\partial_t r_1).
\end{align*}
Therefore, we arrive at the following equation 
\begin{equation*}
\partial_t^2u-H\partial_\alpha^3u=-2u\partial_\alpha \partial_t u-u^2\partial_\alpha^2 u,
+R(u,\partial_tu, \theta).
\end{equation*}
The remainder is given as
\begin{equation}\label{D:R}
R(u, \partial_t u, \theta)=\partial_\alpha^2 r_1+u\partial_\alpha r_4+2ur_4
-\partial_\alpha^{-1}(\partial_\alpha^2u(\partial_tu+u\partial_\alpha u-r_4))+r_6,
\end{equation}
where 
\begin{equation}\label{D:r_6}
\begin{split}
\partial_\alpha r_6=&-(\partial_\alpha \theta) H(-2u\partial_\alpha \partial_t u
-u^2\partial_\alpha^2 u -\partial_\alpha\theta \partial_\alpha^2u-3\partial_\alpha u \partial_t u \\ 
&\qquad \qquad \qquad \qquad \qquad-3u\partial_\alpha^2 u
+\partial_t r_4 -u\partial_\alpha r_4-2r_4 \partial_\alpha u) \\
&+(r_3-H\partial_t u)(H\partial_\alpha^2 u-\partial_\alpha \theta\partial_\alpha u-u\partial_t u
-u^2\partial_\alpha u+\partial_\alpha r_1+ur_4 ) \\
&+\partial_\alpha \theta \partial_t r_3 
+2(H\partial_\alpha u+r_1)(H\partial_\alpha \partial_t u+\partial_t r_1).
\end{split}\end{equation}

The remainders $r_1$, $r_4$ and $r_6$ involve $\theta$, 
which incidentally is determined by solving (\ref{E:system}b) when $u$ is prescribed. 
As such, $R$ may be thought of depending $u$ and $\partial_t u$ only. 
In this sense, we write it as $R(u,\partial_t u)$.
The remainder term $R(u,\partial_t u)$ is of lower order compared to 
$u\partial_\alpha \partial_t u$ and $u^2\partial_\alpha^2 u$.
More precisely, in the following subsection we will show that
\begin{align}
\|R(u,\partial_t u&)\|_{H^s} \leq C(\|u\|_{H^{s+1}}, \|\partial_t u\|_{H^s})
\label{Es:R} 
\end{align}
for $s\geq 1$.

The water waves under surface tension is finally viewed as the nonlinear dispersive equation
\begin{equation}\label{E:u}
\partial_t^2u-H\partial_\alpha^3u=-2u\partial_\alpha \partial_t u-u^2\partial_\alpha^2 u+R(u,\partial_tu),
\end{equation}
where $R(u, \partial_t u)$, defined in \eqref{D:R}, is determined 
with the help of the transport-type equation 
\begin{equation}\label{E:transp}
\partial_t \theta=-u\partial_\alpha \theta+H\partial_\alpha u+r_1(t,\alpha).
\end{equation}

Nothing is lost in deriving \eqref{E:u} (coupled with \eqref{E:transp}) 
from \eqref{E:system}. To see this, we write \eqref{E:u} as the first-order in time system as
\[
\begin{cases}
\partial_t u +u\partial_\alpha u=v, \\
\partial_t v+ u\partial_\alpha v=H\partial_\alpha^3 u+R(u,\partial_t u)
-\partial_t u \partial_\alpha u-u\partial_\alpha^2u.\end{cases}
\]
Comparing the first equation of the above system with \eqref{E:system} dictates that 
the first equation of the above is equivalent to (\ref{E:system}a)
if we set $v=\partial_\alpha^2\theta +r_4$.
It is then straightforward to see that the second equation of the above system 
is equivalent to (\ref{E:system}b) up to constants of integration,
which are zero under the assumption that the wave profile and its derivatives vanish at infinity.

\begin{proposition}\label{P:equivalence}
The equation \eqref{E:u}, where $R(u, \partial_t u)$ is defined by \eqref{D:R}, 
is equivalent to \eqref{E:system}.
\end{proposition}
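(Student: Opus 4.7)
The plan is to verify the equivalence in two directions, using the hint that \eqref{E:u} can be recast as a first-order in time system in an auxiliary variable which one then identifies with the right-hand side of (\ref{E:system}a).

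For the forward direction, \eqref{E:system}$\Rightarrow$\eqref{E:u}, I would observe that this is essentially built into the derivation carried out earlier in Section \ref{SS:formulation-2}: differentiating (\ref{E:system}a) in $t$, substituting $\partial_t\theta$ from (\ref{E:transp}), and using Lemma \ref{L:W_t} together with the splitting $r_2 = H\partial_t u + r_3$ from Proposition \ref{P:remainders} produces exactly \eqref{E:u} with remainder $R(u,\partial_t u)$ as defined in \eqref{D:R}, \eqref{D:r_6}. So it suffices to check that every algebraic manipulation in that derivation is reversible.

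For the converse, given a solution $u$ of \eqref{E:u}, I would introduce the auxiliary variable $v:=\partial_t u + u\partial_\alpha u$, so that \eqref{E:u} becomes the first-order system
\begin{equation*}
\begin{cases}
\partial_t u + u\partial_\alpha u = v,\\
\partial_t v + u\partial_\alpha v = H\partial_\alpha^3 u + R(u,\partial_t u) - \partial_t u\,\partial_\alpha u - u\partial_\alpha^2 u.
\end{cases}
\end{equation*}
Next, define $\theta$ as the unique solution of the transport-type equation \eqref{E:transp} with prescribed initial datum and vanishing at spatial infinity; existence along the characteristics of $\partial_t + u\partial_\alpha$ is standard. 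This fixes $\theta$ (and hence $r_1,r_3,r_4$, which are determined by $u$ and $\theta$) and reduces the claim to showing the identity
\begin{equation*}
v = \partial_\alpha^2\theta + r_4,
\end{equation*}
which is precisely the content of (\ref{E:system}a) with $g=0$.

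To establish this identity I would show that both sides satisfy the same transport equation with the same initial data. The initial-data match is built into the prescription of initial conditions for $(u,\partial_t u)$ and $\theta$. For the evolution, I would compute $(\partial_t + u\partial_\alpha)(\partial_\alpha^2\theta + r_4)$ directly, using \eqref{E:transp} for $\partial_t\theta$ and the explicit expression \eqref{D:r_4} for $r_4$, substituting the time derivatives of $r_1$, $r_2$, $r_3$ via Proposition \ref{P:remainders} and Corollary \ref{C:(K[z]f)_t}, and then comparing term by term with the second equation of the first-order system above. The design of $R$ in \eqref{D:R}--\eqref{D:r_6} is exactly tailored so that this comparison produces an identity; in particular the cancellation of the top-order term $\partial_\alpha\theta\,\partial_\alpha^2 u$ noted after \eqref{D:r_5} is crucial for matching. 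The main obstacle is precisely this bookkeeping: the expressions for $R$ and for $\partial_t r_4$ each contain many terms, and demonstrating the pointwise equality requires carefully tracking every contribution arising from differentiating the Birkhoff--Rott integral \eqref{E:W}, the smoothing operator $\mathcal{K}[z]$, and the commutator $[H,\cdot]$. Finally, the ``constants of integration'' alluded to in the excerpt appear because the $v$-equation is obtained by $\partial_\alpha^{-1}$ of certain identities; these constants are forced to vanish under the standing hypothesis that $u,\theta$ and their derivatives decay at $|\alpha|\to\infty$, closing the argument.
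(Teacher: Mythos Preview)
Your proposal is correct and follows essentially the same route as the paper. The paper's argument (given in the paragraph immediately preceding the proposition rather than in a formal proof environment) is terser: it writes \eqref{E:u} as the first-order system in $(u,v)$ with $v=\partial_t u+u\partial_\alpha u$, observes that the first equation is equivalent to (\ref{E:system}a) under the identification $v=\partial_\alpha^2\theta+r_4$, and then asserts that substituting this into the second equation recovers (\ref{E:system}b) up to constants of integration, which vanish by the decay hypotheses---precisely your outline, with the bookkeeping you flag left implicit.
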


Similarly, the initial value problem of \eqref{E:u} prescribed with the initial conditions 
$u(0,\alpha)=u_0(\alpha)$ and $\partial_t u(0,\alpha)=u_1(\alpha)$ 
is equivalent to the initial value problem of \eqref{E:system} 
with the initial conditions $u(0,\alpha)=u_0(0)$ and $\theta(0,\alpha)=\theta_0(\alpha)$ 
provided that the compatibility condition
\begin{equation*}
u_1=\partial_\alpha^2 \theta_0-u_0\partial_\alpha u_0+
\partial_\alpha^{-1}(-r_2(0,\alpha)\partial_\alpha \theta_0+
(H\partial_\alpha u_0+r_1(0,\alpha))^2)
\end{equation*}
holds true.

The transport-type equation \eqref{E:transp} is to help to determine 
certain terms in the expression of $R$ in \eqref{E:u} in terms of $u$ and $\partial_t u$ only, 
and we only need \eqref{Es:R} in the forthcoming analysis.

As explained in Section 1, a  useful feature of the formulation \eqref{E:u} is that its dispersive character is visible in the linear part. 
Moreover, the highest-order nonlinear terms in \eqref{E:u} do not involve $\theta$ explicitly.

Another useful feature of \eqref{E:u} is that 
it suggests a natural expression for high energy of the nonlinear problem. See Section \ref{S:LWP}.

While \eqref{E:u} is dispersive, its nonlinearity is rather severe, 
and as such in the study of the dispersive property for \eqref{E:u}
we must take its nonlinearity into account. Indeed, we view \eqref{E:u} as
\[
\partial_t^2u-H\partial_\alpha^3u+2u\partial_\alpha \partial_t u+u^2\partial_\alpha^2 u=R(u,\partial_tu).
\]
That is, we view $2u\partial_\alpha \partial_t u$ and $u^2\partial_\alpha^2 u$
as ``linear'' components of the equation, but with variable coefficients
which happen to depend on the solution itself.
Then, we make efforts to establish the dispersive property for the linear operator
\[
\partial_t^2-H\partial_\alpha^3+2V(t,\alpha)\partial_t \partial_\alpha +V^2(t,\alpha)\partial_\alpha^2
\]
for a general class of functions for $V(t,\alpha)$.

\subsection{Estimates for the remainder}\label{SS:R}
The proof of \eqref{Es:R} involves estimates of various remainder terms in \eqref{D:R} 
in terms of $u$ and $\partial_t u$ only (instead of $\theta$). 
To this end, we estimate $\theta$ in terms of $u$ and $\partial_t u$ once and for good. 

\begin{lemma}\label{L:theta_estimate}
For $s\geq 0$ it follows that 
\begin{equation}\label{Es:theta}
\|\theta\|_{H^{s+2}} \leq C(\|u\|_{H^2})(1+\|\partial_t u\|_{H^s} +\|u\|_{H^{s+1}})^2.
\end{equation}
\end{lemma}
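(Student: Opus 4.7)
The plan is to solve \eqref{E:system}(a) with $g=0$ for the top-order term in $\theta$, which gives
\[
\partial_\alpha^2 \theta = \partial_t u + u\,\partial_\alpha u + \partial_\alpha^{-1}\bigl(r_2\,\partial_\alpha\theta - (H\partial_\alpha u + r_1)^2\bigr),
\]
and then to control the $H^s$-norm of the right-hand side, which yields $\|\partial_\alpha^2\theta\|_{H^s}$ and hence $\|\theta\|_{H^{s+2}}$ (the missing low-frequency piece being controlled by the asymptotic flatness of the surface, or equivalently by integrating the transport equation \eqref{E:system}(b) in $\alpha$).

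The first and most important move is to substitute $r_2 = H\partial_t u + r_3$ from Proposition \ref{P:remainders} into the expression above. Applying \eqref{Es:r_2} directly would bring in a factor $C(\|\theta\|_{H^{s+2}})$, the very quantity one is trying to bound, producing a circular inequality. The bound \eqref{Es:r_3} on $r_3$, by contrast, involves only $\|\theta\|_{H^{s+1}}$, one derivative below the target, and is suitable for an inductive argument.

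From there I proceed by strong induction on $s\geq 0$. For the base case $s=0$, I take the $L^2$-norm of the identity: $\partial_t u$ contributes $\|\partial_t u\|_{L^2}$ directly; $u\,\partial_\alpha u$ is controlled by $\|u\|_{L^\infty}\|\partial_\alpha u\|_{L^2}\leq C\|u\|_{H^2}^2$ via Sobolev embedding; the term $\partial_\alpha^{-1}(H\partial_t u\cdot \partial_\alpha\theta)$ is handled by an integration by parts inside $\partial_\alpha^{-1}$, reducing it to products of $\partial_t u$ and $\theta$; $\partial_\alpha^{-1}(r_3\,\partial_\alpha\theta)$ is bounded via \eqref{Es:r_3} at $s=0$; and the quadratic piece $(H\partial_\alpha u + r_1)^2$ via \eqref{Es:r_1} (with $n=0,1$) together with $H^1\hookrightarrow L^\infty$. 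Any remaining dependence of the coefficient functions on $\|\theta\|_{H^2}$ itself is absorbed to the left by a standard bootstrap/continuity argument. For the inductive step, I apply $\partial_\alpha^s$ to the identity and estimate in $L^2$ using Moser/Kato--Ponce product inequalities; the inductive hypothesis controls $\|\theta\|_{H^{s+1}}$ by $C(\|u\|_{H^2})(1+\|\partial_t u\|_{H^{s-1}}+\|u\|_{H^s})^2$, which in turn dominates the $\theta$-dependent coefficient functions in \eqref{Es:r_1} and \eqref{Es:r_3}. The factor $(1+\cdots)^2$ on the right of \eqref{Es:theta} is produced by the quadratic term $(H\partial_\alpha u+r_1)^2$; every other contribution is at worst linear in $\partial_t u$ or $u$ and fits inside this bound.

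The main obstacle is the circular dependence through $r_2$; the substitution $r_2 = H\partial_t u + r_3$ is exactly the device that breaks it, after which the remainder is a routine Moser-type induction with careful bookkeeping of the $(1+\cdots)^2$ factor.
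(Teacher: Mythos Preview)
Your main idea --- invert (\ref{E:system}a) for $\partial_\alpha^2\theta$, replace $r_2$ by $H\partial_t u+r_3$ so that the $\theta$-dependence drops from $H^{s+2}$ to $H^{s+1}$, and induct on $s$ --- is correct and is precisely what the paper does for the inductive step. The substitution $r_2=H\partial_t u+r_3$ is indeed the device that breaks the circularity.

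The gap is in your base case. First, the integration by parts you propose \emph{inside} $\partial_\alpha^{-1}(H\partial_t u\cdot\partial_\alpha\theta)$ throws a derivative onto $\partial_t u$, and at $s=0$ you only have $\partial_t u\in L^2$; the resulting term $\partial_\alpha^{-1}(H\partial_\alpha\partial_t u\cdot\theta)$ is not controlled. The paper avoids this by pairing (\ref{E:system}a) against $\partial_\alpha^2\theta$ rather than taking the $L^2$-norm of the right side directly: then
\[
\int \partial_\alpha^2\theta\cdot\partial_\alpha^{-1}\bigl(H\partial_t u\,\partial_\alpha\theta\bigr)
=-\int H\partial_t u\,(\partial_\alpha\theta)^2,
\]
which costs no derivative on $\partial_t u$. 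Second, and more seriously, the ``bootstrap/continuity'' you invoke cannot absorb the factors $C(\|\theta\|_{H^2})$ arising through \eqref{Es:r_1} and \eqref{Es:r_3}: these are unspecified increasing functions of $\|\theta\|$, not small linear multiples that a continuity argument can swallow. Nor does $\|\partial_\alpha^2\theta\|_{L^2}$ alone recover $\|\theta\|_{H^2}$; the low-frequency piece is genuinely missing, and neither ``asymptotic flatness'' nor ``integrating (\ref{E:system}b) in $\alpha$'' supplies it. The paper's remedy is to first run a standard energy/Gronwall argument \emph{in time} on the transport equation (\ref{E:system}b), which bounds $\|\theta(t)\|_{H^1}$ in terms of $\|\theta_0\|_{H^1}$ and $\sup_{[0,t]}\|u\|_{H^2}$; this history and initial-data dependence is what the constant $C(\|u\|_{H^2})$ in \eqref{Es:theta} tacitly carries. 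With that low-order bound secured, the elliptic step through (\ref{E:system}a) closes at $s=0$, and from there your induction proceeds exactly as you describe.
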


The proof is given in Appendix \ref{A:proofs}.
With the use of \eqref{Es:theta}, we obtain the estimates for the various remainders 
in terms of $u$ and $\partial_t u$ as
\begin{align*}
\|r_1\|_{H^s} & \leq C(\|u\|_{H^2}, \|u\|_{H^{s-1}}, \|\partial_t u\|_{H^{s-2}}), \\
\|r_2\|_{H^s} & \leq C(\|u\|_{H^{s+1}}, \|\partial_t u\|_{H^s}), \\
\|r_3\|_{H^s} & \leq C(\|u\|_{H^{s+1}}, \|\partial_t u\|_{H^{s-1}}), \\
\|r_4\|_{H^s} & \leq C(\|u\|_{H^2}, \|u\|_{H^s}, \|\partial_t u\|_{H^{s-1}}),
\end{align*}
where $s\geq 1$.
We also estimate for $\partial_t r_1$, $\partial_t r_2$, (and in turn, $\partial_t r_3$).

\begin{lemma}\label{L:r_t}
For $s\geq 1$, 
\begin{align}
\|& \partial_t r_1\|_{H^s} \leq C(\|\partial_t u\|_{H^1}, \|u\|_{H^{s+1}}, \|\partial_t u\|_{H^{s-1}}),
\label{Es:r_1t} \\
\|& \partial_t r_2\|_{H^s} \leq C(\|u\|_{H^{s+2}}, \|\partial_t u\|_{H^{s+1}}),\\
\label{Es:r_2t}
\| & \partial_t r_3\|_{H^s} \leq C( \|u\|_{H^{s+2}}, \|\partial_t u\|_{H^{s+1}}),\\
\| & \partial_t r_4\|_{H^s} \leq C( \|u\|_{H^{s+1}}, \|\partial_t u\|_{H^{s}}).
\end{align}
\end{lemma}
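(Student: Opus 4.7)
The general strategy is uniform across the four estimates: each $r_i$ is built from the kinematic quantities $\gamma$, $z$, $\theta$ and from the smoothing operators $\mathcal{K}[z]$ and $[H,h]$, so we differentiate once in $t$, trade every isolated first-order time derivative $\gamma_t$, $z_t$, $\theta_t$ for a spatial expression via \eqref{E:system}, \eqref{Es:gamma_t}, \eqref{Es:z_t}, absorb time derivatives of the operators $\mathcal{K}[z]$ and $[H,h]$ through Corollary \ref{C:(K[z]f)_t}, and finally eliminate every $\theta$-norm with Lemma \ref{L:theta_estimate}. The first estimate \eqref{Es:r_1t} is the most direct: differentiating \eqref{D:r_1} and \eqref{D:m} produces terms in which $\mathcal{K}[z]$ and $[H,1/z_\alpha^2]$ act on at most one derivative of $\gamma$, $\gamma_t$, $z_{\alpha\alpha}$ or $z_{\alpha t}$; the operator part is handled by \eqref{Es:K_t}--\eqref{Es:H_t} at the cost of $\|\theta_t\|_{H^{s+1}}$, controlled by (\ref{E:system}b) in terms of $\|u\|_{H^{s+1}}$ plus remainders.

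For $\partial_t r_2$ and $\partial_t r_3$, rather than differentiating the Birkhoff-Rott integral \eqref{E:W} twice in time from scratch, we exploit Proposition \ref{P:remainders}'s identity $r_2 = H\partial_t u + r_3$, so that
\[
\partial_t r_2 = H\partial_t^2 u + \partial_t r_3,
\]
and we reduce $\partial_t^2 u$ via \eqref{E:u}: its $H^{s-1}$ norm is bounded by $\|u\|_{H^{s+2}}$, $\|\partial_t u\|_{H^{s+1}}$ and $\|R(u,\partial_t u)\|_{H^{s-1}}$, with the latter controlled through \eqref{Es:R}. To bound $\partial_t r_3$ in isolation, we recall the decomposition $r_3 = \partial_t(\V - \mathbf{W}\cdot \hat{\mathbf{t}}) + R_1+R_2+R_3+R_4$ from the proof of Proposition \ref{P:remainders} and differentiate it term by term. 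The new ingredients that appear are $\gamma_{tt}$ and $z_{tt}$; we obtain them by differentiating \eqref{E:gamma} and the expression for $z_t$ used in \eqref{Es:z_t} one more time and substituting \eqref{E:system}, after which no isolated $\partial_t^2$ remains and only $\gamma_t$, $z_t$, $\theta_t$ --- all already estimated --- together with spatial derivatives of $u$, $\theta$ survive.

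For $\partial_t r_4$ the product rule applied to \eqref{D:r_4} yields
\[
\partial_t r_4 = \partial_\alpha^{-1}\bigl(\partial_t r_2\,\partial_\alpha\theta + r_2\,\partial_\alpha\partial_t\theta + 2(H\partial_\alpha u + r_1)(H\partial_\alpha\partial_t u + \partial_t r_1)\bigr),
\]
and the $\partial_\alpha^{-1}$ prefactor gains one derivative, so we only need $\partial_t r_2$ and $\partial_t r_1$ in $H^{s-1}$, which lowers the regularity threshold to $\|u\|_{H^{s+1}}$, $\|\partial_t u\|_{H^s}$ as stated. The hard step is the $\partial_t^2$ contribution hidden in $\partial_t r_2 = H\partial_t^2 u + \partial_t r_3$: naively one would fear a loss of three spatial derivatives when converting $\partial_t^2\gamma$ through \eqref{E:gamma}, but every such derivative either lands on a factor already controlled in high norm (using Lemma \ref{L:theta_estimate}) or enters through $\mathcal{K}[z]$ or $[H, 1/z_\alpha^2]$, whose smoothing via \eqref{Es:K[z]}, \eqref{Es:H} and the time-differentiated versions \eqref{Es:K_t}, \eqref{Es:H_t} exactly compensates it. This bookkeeping, rather than any single analytic difficulty, is the main obstacle; once \eqref{Es:r_2t} is in place the remaining estimates follow as indicated.
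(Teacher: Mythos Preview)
Your overall scheme for $\partial_t r_1$, $\partial_t r_3$, and $\partial_t r_4$ is essentially the paper's, but there is a genuine circularity in your treatment of $\partial_t r_2$. You write $\partial_t r_2 = H\partial_t^2 u + \partial_t r_3$ and propose to control $\partial_t^2 u$ by substituting the nonlinear equation \eqref{E:u} and invoking \eqref{Es:R} for the remainder $R(u,\partial_t u)$. But \eqref{Es:R} is \emph{not} available at this point: it is established in Section~\ref{SS:R} only \emph{after} Lemma~\ref{L:r_t}, precisely because $R$ contains $r_6$, and $r_6$ (see \eqref{D:r_6}) is built from $\partial_t r_1$, $\partial_t r_3$, and $\partial_t r_4$. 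So you are using the conclusion of the lemma to prove the lemma. An induction-in-$s$ repair is not automatic either, since the base case would still need $\|R\|_{H^{s_0}}$ from somewhere other than \eqref{Es:R}.

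The paper avoids this by never passing through \eqref{E:u}. It observes (via \eqref{E:r_2_again} and the decomposition of $\mathbf{W}_t\cdot\hat{\mathbf{n}}$ in Lemma~\ref{L:W_t}) that the principal part of $\partial_t r_2$ is $\tfrac12 H\gamma_{tt}$, and then obtains $\gamma_{tt}$ by differentiating the \emph{integral} identity \eqref{E:gamma_t}, which yields an equation of the form $(1+2\mathcal{J}[z])\gamma_{tt}=\cdots$ with a right-hand side expressed purely in terms of $\theta$, $\gamma$, $z$ and their first time derivatives; inverting $1+2\mathcal{J}[z]$ on $L^2$ gives $\|\gamma_{tt}\|_{H^s}\le C(\|u\|_{H^{s+2}},\|\partial_t u\|_{H^{s+1}})$ without any appeal to $R$. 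You in fact already set up exactly this $\gamma_{tt}$ computation for $\partial_t r_3$; the clean fix is to use it again for $\partial_t r_2$ (equivalently, replace $\partial_t^2 u$ by $\tfrac12\gamma_{tt}-\partial_t^2(\V-\mathbf{W}\cdot\hat{\mathbf{t}})$ via the definition \eqref{D:u}) rather than routing through \eqref{E:u}. Note also that when you differentiate \eqref{E:gamma} directly, the term $-2\partial_t(\mathbf{W}_t\cdot\hat{\mathbf{t}})$ reintroduces $\gamma_{tt}$ inside the Birkhoff--Rott integral, so the equation for $\gamma_{tt}$ is implicit; this is exactly what the operator $1+2\mathcal{J}[z]$ encodes, and its $L^2$-invertibility is what closes the argument.
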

The proofs of \eqref{Es:r_1t} and \eqref{Es:r_2t} are given in Appendix \ref{A:proofs}. Therefore,
\[
\| r_6\|_{H^s} \leq C(\|u\|_{H^{s+1}}, \|\partial_t u\|_{H^s}),
\]
and \eqref{Es:R} follows.

\def\phi{\varphi}

\section{Construction of the dyadic frequency parametrix}\label{S:parametrix}

This section contains the detailed construction of semiclassical parametrices 
for the linearized water-wave equation.

\def\tu{u^j}
\def\ttu{\tilde{u}}
\subsection{The oscillatory-integral ansatz}
Let us invoke the standard notations $D_t = -i \partial_t$ and $D_\alpha=-i\partial_\alpha$
and let us denote
\begin{equation}\label{D:P}
P = D_t^2 -   iHD_\alpha^3+2V(t,\alpha)D_\alpha D_t + V^2(t,\alpha) D_\alpha^2,
\end{equation}
where the coefficient function $V \in H^l([0,T])H^k(\reals)$ is given 
for some $T>0$ fixed and for $l,k>0$ sufficiently large. 
We assume that $V$ is real-valued, and we tacitly identify 
$V$ with an $H^{l}_tH^{k}_\alpha$ extension supported in a slightly larger set in $t$.

The operator $P$ is obtained by replacing the nonlinear coefficient $u$ in 
\[\partial_t^2 -H\partial_\alpha^3 +2u \partial_\alpha \partial_t +u^2\partial_\alpha^2,\]
which defines the nonlinear equation \eqref{E:u}, by a variable coefficient $V(t,\alpha)$,
and thus it is related to the linearized equation of \eqref{E:u}.

Let the frequency cut-off function $\psi^0\in \Ci( \reals )$ satisfy 
$\psi^0(\xi) \equiv 1$ for $| \xi | \geq M+1$ and 
$\psi^0 (\xi) \equiv 0$ for $| \xi | \leq M$ for some $M>0$ large to be fixed later.  
Let the dyadic frequency cut-off function $\psi\in \Ci( \reals )$ 
satisfy $\psi(\xi) \equiv 1$ on $\xi \in [2^{-1/4}, 2^{1/4}]$,
be supported on $[2^{-3/4}, 2^{3/4}]$, and
\begin{equation*}
 \sum_{j \geq j_0} \psi(2^{-j} |\xi| ) \equiv 1 \quad \text{for } | \xi| \geq M+1, 
\end{equation*}
where $2^{j_0} \geq M$, and let
\begin{equation*}
\psi^j( \xi ) = \psi(2^{-j} | \xi | ).
\end{equation*}
That is, $\psi^j(\xi)=1$ on $\xi \in [2^{j-1/4}, 2^{j+1/4}]$ and it is supported on $[2^{j-3/4}, 2^{j+3/4}]$.
A function $f$ is said to satisfy the {\em dyadic-frequency localization} if
\begin{equation}\label{dy-loc}
\psi^j( D_\alpha) f = f.
\end{equation}

Let $j \geq j_0$ be held fixed throughout this section, where $2^{j_0}\geq M$. 
Let $\tu_0 \in L^2(\reals)$ and $\tu_1 \in H^{-3/2}(\reals)$ 
satisfy the dyadic frequency localization \eqref{dy-loc}. 
Our goal is to construct a {\em dyadic-frequency parametrix} to $Pu=0$
with the frequency-localized initial data $\tu_n$'s, $n=0,1$,
for frequencies comparable to $2^j$ and on a time scale comparable to
$2^{-j/2}$.
More precisely, we shall find a function $w^j$ approximately solving 
\[
\begin{cases}
Pu=0 \quad \text{ in } [0,2^{-j/2}T]_t \times \reals_\alpha, \\ 
u(0,\alpha)=\tu_0(\alpha) \quad \text{and}\quad \partial_t u(0,\alpha)=\tu_1(\alpha)
\end{cases}
\]
(with errors bounded in Sobolev spaces)
for the frequency interval $2^{j-2} \leq |\xi| \leq 2^{j+2}$. 
Here, we only consider the time interval $t \in [0,2^{-j/2}T]$, 
although the results apply on any semiclassical time interval of length $2^{-j/2} T$.  
The over all dependence on the coefficient function $V$ is in the fixed time interval $t \in [0,T]$. 
For simplicity of exposition, we often write $|\xi| \sim 2^j$ 
to mean the dyadic frequency band $2^{j-2} \leq |\xi| \leq 2^{j+2}$.

Motivated by the oscillatory integral representation in \eqref{E:ww-oscillatory0} 
for the zero-coefficient case, we make the ansatz 
\begin{equation}\label{w0-ansatz}
w^j(t,\alpha) = \frac{1}{2 \pi} \iint e^{-i\beta \xi }( e^{i\phi^{j,+}(t,\alpha,\xi)} f^{j,+}(\beta) 
+ e^{i \phi^{j,-}(t,\alpha,\xi)} f^{j,-}(\beta))   d\beta\, d \xi
\end{equation}
for the {\em (leading-order) parametrix}. 
Here, $\phi^{j,\pm}$ and $f^{j,\pm}$ satisfy the dyadic frequency localization \eqref{dy-loc}.
In order to satisfy the initial conditions, we insist that 
\[
\phi^{j,\pm}(0,\alpha, \xi) = \alpha \xi.
\]
The {\em phase functions} $\phi^{j,\pm}$ are taken in the class of rough symbols,
which is described below.


\subsection*{Symbol classes} 
For $k \geq 0$ and $m \in \reals$, 
denoted by $\s_{k}^{m}$ the {\em class of rough symbols} is defined to be the set
\[\s_k^m = \{ a(\alpha, \xi) \in  \lll \xi \rrr^{m} W^{k,\infty}_\alpha( \reals ) \Ci_\xi(\reals):
  | \partial_\alpha^{k'} \partial_\xi^{m'} a | \leq C_{k',m'} \lll \xi  \rrr^{m  - m'} \text{ for } k'  \leq k \},
\]
where $\lll \xi \rrr=(1+\xi^2)^{1/2}$.
Symbols in $\s_k^m$ are not necessarily smooth in the $\alpha$-variable
(as opposed to classical symbols), but
they share in common with classical symbols decay properties in the $\xi$-variable.
We write $\s_k$ for $\s_k^0$ when there is no ambiguity.  

The {\em quantization} of a symbol $a$ in $\s_{k}^{m}$ is the usual (left) quantization. 
It is initially defined as an operator on Schwartz functions $f$ as
\[
\Op (a)(\alpha,D) f(\alpha) = \frac{1}{2 \pi} \iint  a(\alpha, \xi) e^{i(\alpha-\beta) \xi } f(\beta)
\, d\beta \, d \xi,
\]
and then extended in the distributional sense.
We write $\Psi_k^m$ for the corresponding space of quantized operators.

The main property of the symbol classes $\s_k^m$ is the $L^2$ boundedness.
\begin{lemma}\label{ex-L2-lemma}
If $a \in \s_{k}^{0}$ for $k>0$ sufficiently large, then
$\Op(a)$ extends to a bounded linear operator 
\be
\Op (a) : L^2( \reals ) \to L^2 ( \reals )
\ee
with the operator norm depending on at most $k$ derivatives of $a(\alpha, \xi)$ 
measured in the $L^\infty$ norm.

\end{lemma}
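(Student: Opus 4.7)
The plan is to prove this as a version of the Calder\'on--Vaillancourt $L^2$ boundedness theorem adapted to the rough symbol class $\s_k^0$, via a joint Littlewood--Paley decomposition in the phase space $(\alpha, \xi)$ followed by a Cotlar--Stein almost-orthogonality argument.

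First I would decompose the symbol into phase-space localized pieces. Choose a dyadic partition of unity $1 = \sum_{j \geq 0} \chi_j(\xi)$ with $\chi_j$ supported in a band $|\xi| \sim 2^j$ for $j \geq 1$ and $\chi_0$ supported in $|\xi| \leq 2$, together with a unit-scale spatial partition $1 = \sum_{n \in \ZZ} \phi_n(\alpha)$, where $\phi_n(\alpha) = \phi(\alpha - n)$ for a compactly supported bump $\phi$. Write $a = \sum_{n, j} a_{n, j}$ with $a_{n, j}(\alpha, \xi) := \phi_n(\alpha) \chi_j(\xi) a(\alpha, \xi)$; the $\s_k^0$ bounds descend to each piece, giving $|\partial_\alpha^{k'} \partial_\xi^{m'} a_{n, j}| \leq C_{k', m'} 2^{-jm'}$ for $k' \leq k$ and $m'$ arbitrary. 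The Schwartz kernel of $\Op(a_{n, j})$ is
\[
K_{n, j}(\alpha, \beta) = \phi_n(\alpha) \cdot \frac{1}{2\pi} \int \chi_j(\xi) a(\alpha, \xi) e^{i(\alpha - \beta)\xi} d\xi;
\]
repeated integration by parts in $\xi$, using the arbitrary-order $\xi$-smoothness of $a$, yields $|K_{n, j}(\alpha, \beta)| \leq C_N \phi_n(\alpha) \cdot 2^j (1 + 2^j |\alpha - \beta|)^{-N}$ for any $N$, so Schur's test gives $\|\Op(a_{n, j})\|_{L^2 \to L^2} \leq C$ uniformly in $n, j$.

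Next I would apply the Cotlar--Stein lemma to sum the pieces. The $\phi_n$-localization immediately gives $\Op(a_{n, j})^* \Op(a_{n', j'}) = 0$ whenever $|n - n'| \geq C_0$, since the composition kernel $\int \overline{K_{n, j}(\gamma, \alpha)} K_{n', j'}(\gamma, \beta) d\gamma$ contains the vanishing factor $\phi_n(\gamma) \phi_{n'}(\gamma)$. For dyadic $\xi$-separation, this composition kernel can be written as an oscillatory integral in $(\xi, \eta)$ with amplitude
\[
G(\xi, \eta) := \int \phi_n(\gamma) \phi_{n'}(\gamma) \overline{a(\gamma, \xi)} a(\gamma, \eta) e^{i\gamma(\eta - \xi)} d\gamma,
\]
a genuine function (the $\gamma$-support being compact) obeying $|G(\xi, \eta)| \leq C_N (1 + |\eta - \xi|)^{-N}$ for $N \leq k$, by $N$-fold integration by parts in $\gamma$ using the $\alpha$-regularity of $a$. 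For $|j - j'| \geq 2$ one has $|\eta - \xi| \geq c \cdot 2^{\max(j, j')}$ on the joint support of $\chi_j(\xi) \chi_{j'}(\eta)$, yielding operator-norm decay $\|\Op(a_{n, j})^* \Op(a_{n', j'})\|_{L^2 \to L^2} \leq C_N 2^{-(N-1)|j - j'|}$ by a further Schur estimate on the resulting composition kernel; this is summable in $j'$ (and the $n'$-sum is finite), closing the Cotlar--Stein hypothesis. A symmetric argument handles $\Op(a_{n, j}) \Op(a_{n', j'})^*$.

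The main obstacle is this last step: because adjoints of rough pseudodifferential operators do not admit a clean symbolic calculus, all the almost-orthogonality estimates must be carried out at the Schwartz-kernel level rather than symbolically, which is what forces the detour through the amplitude $G(\xi, \eta)$. The hypothesis ``$k$ sufficiently large'' enters precisely through the $N$-fold integration by parts in $\gamma$ producing the decay of $G$, and this also determines the precise number of $L^\infty$-derivatives of $a$ controlling the final operator norm.
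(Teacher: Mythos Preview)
Your proposal is correct and is in fact considerably more detailed than what the paper provides: the paper does not give a proof at all, merely stating that ``the proof is exactly the same as in the setting of smooth symbols, keeping track of how many derivatives are used.'' Your Cotlar--Stein argument with a joint $(\alpha,\xi)$ phase-space decomposition is precisely one of the standard smooth-symbol proofs, carried through while tracking the finite $\alpha$-regularity, so you are faithfully executing what the paper only gestures at.

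One small point worth tightening: in the $T^*T$ step, the phrase ``a further Schur estimate on the resulting composition kernel'' is a bit loose, since the composition kernel $K_{\mathrm{comp}}(\alpha,\beta)=\iint e^{i\alpha\xi-i\beta\eta}\chi_j(\xi)\chi_{j'}(\eta)G(\xi,\eta)\,d\xi\,d\eta$ is oscillatory and a direct $L^1_\beta$ bound is awkward. The cleanest route is to bound the Hilbert--Schmidt norm instead: by Plancherel in $(\alpha,\beta)$ one has $\|K_{\mathrm{comp}}\|_{L^2_{\alpha,\beta}}=\|\chi_j\chi_{j'}G\|_{L^2_{\xi,\eta}}\leq C_N\,2^{-N\max(j,j')}\,2^{(j+j')/2}$, which is more than summable in $j'$ once $N\geq 2$. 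For the $TT^*$ side your ``symmetric argument'' actually simplifies: the $\gamma$-integral produces $\delta(\xi-\eta)$, so $\Op(a_{n,j})\Op(a_{n',j'})^*=0$ whenever $|j-j'|\geq 2$, and only the spatial almost-orthogonality (handled by integration by parts in $\xi$, using the unrestricted $\xi$-smoothness) is needed there. With these adjustments the derivative count comes out to roughly $k\geq 2$, consistent with the lemma's hypothesis.
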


The proof is exactly the same as in the setting of smooth symbols, 
keeping track of how many derivatives are used.

Finally, by $W^{l,\infty}_T \s_k^m$ we denote the space $W^{l,\infty}_t([0,T]) \s_k^m$, 
the space of $W^{l,\infty}([0,T])$ functions taking values in rough symbols in $\s_k^m$.

\

The main result of this section is the existence of the dyadic-frequency parametrix 
of the form in \eqref{w0-ansatz} with certain properties of the phase functions $\phi^\pm$. 

\begin{proposition}[Existence of the leading-order parametrix]
\label{parametrix-prop-2}Let $V \in H^{l}([0,T]) H^{k}(\reals)$ 
for some $T>0$ and $l,k \gg 1$ sufficiently large
and let $j\geq j_0\geq 1$, where $2^{j_0}\geq M$ is sufficiently large.

If $(\tu_0, \tu_1) \in L^2(\reals) \times H^{ - 3/2}(\reals)$ satisfy
the dyadic-frequency localization \eqref{dy-loc}, then
there exist the phase functions
$$\phi^{j,\pm}(t,\alpha, \xi)=  \alpha \xi \pm |\xi|^{3/2}(t+\vartheta^{j,\pm}(t,\alpha,\xi))$$ 
for $0 \leq t\leq 2^{-j/2}T$ and $2^{j-2} \leq | \xi | \leq 2^{j+2}$ 
with $\vartheta^\pm \in W^{l, \infty}_{2^{-j/2}T}\s_{k}^{0}$,
and $f^\pm$ which satisfy the dyadic-frequency localization \eqref{dy-loc} and the estimate
\begin{equation}\label{Es:f^pm}
\| f^\pm \|_{L^2_\alpha} \leq C_1(\| \tu_0 \|_{L^2_\alpha} +\|\tu_1\|_{H^{-3/2}_\alpha}),
\end{equation}
so that $w^j$ defined by \eqref{w0-ansatz} satisfies 
\begin{equation*}
\begin{cases} P w = E \quad \text{ in }  [0,{2^{-j/2}T}]_t \times \reals_\alpha, \\
w(0,\alpha) = \tu_0(\alpha)  \quad \text{and} \quad \partial_t w(0,\alpha) = \tu_1(\alpha)  
\end{cases}
\end{equation*}
with the pointwise error estimate 
\begin{equation}\label{error-2}
\|  E (t) \|_{L^2_\alpha } 
\leq C_2 ( \| \tu_0 \|_{H^{1}} +\|\tu_1 \|_{H^{-1/2}}), \qquad 0 \leq t
\leq 2^{-j/2}T .
\end{equation}
Here, $C_1,C_2 >0$ are polynomials in
$ \| V \|_{H^{l'}([0,T])H^{k'}(\reals)}$ for some values of $l'$ and $k'$ 
in the ranges $0 \leq l' \leq l$, $0 \leq k' \leq k$.
\end{proposition}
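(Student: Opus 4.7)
The plan is to construct the parametrix in four steps: derive the eikonal equation, solve it in the rough-symbol class on the semiclassical time scale, match initial data by inverting an elliptic pseudodifferential system, and estimate the pointwise error.

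First, I would substitute the ansatz \eqref{w0-ansatz} into $Pw = D_t^2 w - iHD_\alpha^3 w + 2VD_\alpha D_t w + V^2 D_\alpha^2 w$ and collect the top-order terms, namely those in which every derivative lands on the oscillating factor $e^{i\phi^{j,\pm}}$. Since the symbol of $-iHD_\alpha^3$ applied to an oscillatory factor is $-\mathrm{sgn}(\phi_\alpha^\pm)(\phi_\alpha^\pm)^3 = -|\phi_\alpha^\pm|^3$, the principal-symbol cancellation requires
\[
(\phi_t^\pm)^2 + 2V\phi_\alpha^\pm \phi_t^\pm + V^2 (\phi_\alpha^\pm)^2 - |\phi_\alpha^\pm|^3 = 0,
\]
which factors as $(\phi_t^\pm + V\phi_\alpha^\pm)^2 = |\phi_\alpha^\pm|^3$, and so yields the two first-order Hamilton-Jacobi equations
\[
\phi_t^\pm + V(t,\alpha)\phi_\alpha^\pm \mp |\phi_\alpha^\pm|^{3/2} = 0, \qquad \phi^\pm(0,\alpha,\xi)=\alpha\xi.
\]
Writing $\phi^\pm = \alpha\xi \pm|\xi|^{3/2}(t+\vartheta^\pm)$ reduces this to a nonlinear transport equation for $\vartheta^\pm$ whose characteristic speed is of order $|\xi|^{1/2}$.

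Second, I would invoke the Hamilton-Jacobi lemma referenced as Lemma \ref{HJ-lemma} to solve for $\vartheta^\pm$ by the method of characteristics. Because the principal part of the equation is only of order $|\xi|^{1/2}$ rather than being homogeneous of integer degree, the flow can be followed only over time $t \lesssim |\xi|^{-1/2} \sim 2^{-j/2}$; on this interval the standard contraction or Picard iteration in the $\alpha$-variable produces $\vartheta^\pm \in W^{l,\infty}_{2^{-j/2}T}\s_k^0$, with symbol bounds depending polynomially on $\|V\|_{H^{l'}_tH^{k'}_\alpha}$ after a controlled loss of derivatives from $V$ to $\vartheta^\pm$.

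Third, with $\phi^\pm$ in hand I would match the two initial conditions $w(0,\alpha)=\tu_0$, $\partial_t w(0,\alpha)=\tu_1$. Since $\phi^\pm(0,\alpha,\xi)=\alpha\xi$ and $\phi_t^\pm(0,\alpha,\xi) = \pm|\xi|^{3/2} - V(0,\alpha)\xi$, this yields a $2\times 2$ system of pseudodifferential equations for $(f^{j,+}, f^{j,-})$ whose symbol matrix has determinant $\phi_t^-(0)-\phi_t^+(0) = -2|\xi|^{3/2}$ modulo lower-order terms. On the dyadic band $|\xi|\sim 2^j$ this is elliptic in $\s_k^{3/2}$, and inverting the system using Lemma \ref{ex-L2-lemma} gives $f^{j,\pm}$ with the dyadic localization \eqref{dy-loc} inherited from that of $\tu_0, \tu_1$, and with the bound \eqref{Es:f^pm} because the inverse loses $3/2$ derivatives exactly against $\tu_1 \in H^{-3/2}$.

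Fourth, since the eikonal kills the top-order symbol of $Pw$, the error $E = Pw$ comes entirely from subprincipal contributions: derivatives of the symbol-valued amplitudes produced by applying $D_t, D_\alpha$, and $H$ to $e^{i\phi^\pm}$ only partially, plus the commutator $[H, \cdot]$ arising because $H$ is not a local operator and must be evaluated against the rough phase. Each such correction gains at least $|\xi|^{-1/2}$ relative to the top power $|\xi|^3$, which, combined with the dyadic localization $|\xi|\sim 2^j$ and \eqref{Es:f^pm}, yields $\|E(t)\|_{L^2_\alpha} \leq C(\|\tu_0\|_{H^1} + \|\tu_1\|_{H^{-1/2}})$ uniformly on $0 \leq t \leq 2^{-j/2}T$. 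I expect the main obstacle to be the second step: solving a Hamilton-Jacobi equation that is neither homogeneous nor polyhomogeneous in $(\phi_t,\phi_\alpha)$, in the rough-symbol class $\s_k^0$ rather than among smooth symbols, while carefully tracking how many derivatives of $V$ are consumed, so that the resulting $\vartheta^\pm$ is regular enough for the pseudodifferential calculus used in the error step and for the $L^2$-continuity from Lemma \ref{ex-L2-lemma} to apply uniformly in $j$.
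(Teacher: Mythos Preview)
Your proposal is correct and follows essentially the same four-step architecture as the paper: derive and factor the eikonal equation (the paper's \eqref{phi-hj-eqn} and \eqref{phi-pm-root-eqn}), solve it on the semiclassical time scale via Lemma~\ref{HJ-lemma}, recover $f^{j,\pm}$ by inverting the elliptic $2\times 2$ system built from $\phi_t^{j,\pm}(0,\alpha,\xi)$ (the paper's Section~\ref{SS:parametrix}), and bound the subprincipal error. Your identification of the Hamilton--Jacobi step as the main obstacle is accurate; the paper handles it not by a direct Picard iteration but by the generating-function construction for the associated Hamiltonian flow \eqref{Ham-system-1}, though either route works.

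One quantitative inconsistency to fix in Step~4: a gain of only $|\xi|^{-1/2}$ over the top amplitude $|\xi|^3$ would leave the error symbol at order $|\xi|^{5/2}$, which after applying \eqref{Es:f^pm} and the $L^2$-boundedness of the Fourier integral operator gives $\|E\|_{L^2}\lesssim \|u_0^j\|_{H^{5/2}}+\|u_1^j\|_{H^1}$, not the claimed $H^1\times H^{-1/2}$ bound. To land on \eqref{error-2} you need the error amplitude to be $O(|\xi|)$, i.e.\ a gain of $|\xi|^{-2}$. This does follow once you use the sharper symbol bounds on $\vartheta^\pm$ coming from the rescaled variable in the proof of Lemma~\ref{HJ-lemma} (each $\alpha$-derivative of $\vartheta$ gains a factor $2^{-j/2}$, so $\phi_{\alpha\alpha}$, $\phi_{t\alpha}$, $\phi_{tt}$ are all at most $O(|\xi|)$ on the semiclassical time scale), but as written your order-counting does not close.
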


\begin{remark}\rm
As we will see later, on the short time scales, the oscillatory
integral defined by $\phi^{j,\pm}$ preserves dyadic localization (see Lemma \ref{freq-lemma}).  
In particular, multiplying the integral by
$2^{js}$, any estimate we prove on $w^{j,\pm}$ in $H^{s'}_\alpha$ 
has an immediate analogue in $H^{s'+s}_\alpha$.
\end{remark}
 
At first glance, the error $E$ looks bad, for it requires in \eqref{error-2}
one more derivative of the initial data.  However, the error is in the
inhomogeneity, and as such it only appears when we try to measure the
difference between the actual solution and the parametrix.  The 
{\em energy estimates} \eqref{Es:linear_energy1} associated to the
linear inhomogeneous problem with zero initial data, on the other hand, 
control $3/2$ more derivatives of the solution as compared to the inhomogeneity. 
Hence, $E$ is bad but controllable.

\begin{remark}[Remark on the ansatz]\label{R:ansatz}\rm
We explain why we take \eqref{w0-ansatz} as our parametrix. 

In order to solve $Pu=0$ with the initial data localized to high frequencies,
one would try a fine parametrix with {\em amplitudes} $A^\pm$ as 
\begin{align*}
w (t,\alpha)= \frac{1}{2 \pi} \iint e^{-i\beta \xi }( e^{i \phi^{+}(t,\alpha,\xi)}& 
A^{+}(t,\alpha,\xi)f^{+}(\beta) \\
& + e^{i \phi^{-}(t,\alpha,\xi)}A^{-}(t,\alpha,\xi) f^{-}(\beta) )\,   d\beta \,d \xi.
\end{align*}
We require that $\phi^\pm$ satisfy $\varphi^\pm(0,\alpha,\xi) = \alpha \xi$
and that $A^\pm(0,\alpha,\xi)$ and $A^\pm_t(0,\alpha, \xi)$ are elliptic, and as such 
the recovery of the initial conditions entails solving for $f^\pm$ 
a system of elliptic pseudodifferential equations.

Applying $Pu=0$ to our ansatz, we group terms according to their orders in $\xi$. 
The worst terms, produced when derivatives fall on the phase functions, 
make a first-order nonlinear equation for $\varphi^\pm$, 
commonly referred to as the {\it eikonal} or {\it Hamilton-Jacobi} equation.
The other terms form a linear equation, commonly referred to as the {\it transport} equation, 
for $A^\pm$ with coefficients depending on $\varphi^\pm$ and its derivatives.

The usual approach to solving the Hamilton-Jacobi equation is 
through the technique of generating functions for the associated Hamiltonian. 
The equation is, however, neither homogeneous nor polyhomogeneous 
(in $\varphi_t^\pm$ and $\varphi^\pm_\alpha$),
and as such solutions are found on a time scale comparable to $|\xi|^{-1/2}$.
See Lemma \ref{HJ-lemma}.
We thus construct phase functions (and amplitudes)
for each dyadic frequency band $|\xi| \sim 2^j$ on a time interval
comparable to $2^{-j/2}$.

With $\varphi^\pm$ determined, the usual approach to solving the transport equation is 
to expand $A^\pm$ as a formal series as
\[
A^\pm(t,\alpha,\xi) = \sum_{n\geq 0} A^{\pm,n}(t,\alpha,\xi)\xi^{-n/2}
\]
and determining $A^{\pm, n}$ recursively. In practice,
one takes a finite number of terms in the formal series and estimates the resulting error. 
In our application, we only take the very first term, $A^{\pm,0}\equiv 1$, in the amplitude expansion. 
As a result, we have \eqref{w0-ansatz} as our leading-order parametrix.

The full amplitude expansion can be computed 
assuming more regularity on $V$, but 
it results in polynomial growth of the lower order terms in the amplitude.
\end{remark}

We now construct the parametrix. 
Let us fix $j \geq j_0\geq 1$, where $2^{j_0}\geq M$ is sufficiently large. 
Our parametrix as well as the phase function depend on the dyadic-frequency band 
$2^{j-2} \leq |\xi| \leq  2^{j+2}$.
However, we simply write $w$ and $\phi^\pm$, respectively.
To avoid excessive notation, we furthermore consider only the term with the superscript $+$ 
and write $\phi =\phi^{j,+}$, $f = f^{j,+}$, and 
\begin{equation}\label{w+ansatz}
w(t,\alpha) = \frac{1}{2 \pi} 
\iint e^{-i\beta \xi } e^{i \phi(t,\alpha,\xi)}  f(\beta)  \, d\beta \, d \xi.
\end{equation}
The construction is completely analogous for the term with the superscript $-$.  
We further assume $\xi >0$.  After our construction is complete, 
it will be justified that the parametrix preserves the sets 
$2^{j-2} \leq \xi \leq 2^{j+2} $ and $ -2^{j+2} \leq \xi \leq -2^{j-2} $.
Thus, we tacitly assume $\xi>0$ and $2^{j-2} \leq \xi \leq  2^{j+2}$ throughout our construction.

We compute $Pw$ using the ansatz \eqref{w0-ansatz}. It is straightforward that
\begin{alignat}{3}
&D_t^2 w(t,\alpha) &&=  \frac{1}{2 \pi} \iint e^{-i\beta \xi } e^{i \phi} (  \phi_t^2 
   - i \phi_{tt}  ) f(\beta)  d\beta d \xi,  \label{D-eqn-1} \\
&D_\alpha^2 w(t,\alpha) &&=  \frac{1}{2 \pi} \iint e^{-i\beta \xi } e^{i \phi} ( \phi_\alpha^2 
  - i \phi_{\alpha\alpha} ) f(\beta)  d\beta d \xi, \\
&D_t D_\alpha w (t,\alpha) &&=  \frac{1}{2 \pi} \iint e^{-i\beta \xi } e^{i \phi}
( \phi_\alpha  \phi_t    -i \phi_{\alpha t}    ) f(\beta)  d\beta d \xi.  \label{D-eqn-n}
\end{alignat}
We recall that the subscripts mean partial differentiation.
In Appendix \ref{A:parametrix} we show that 
$iHD_\alpha^3w = - H\partial_\alpha^3 w$ can be expressed in a similar fashion as
\begin{equation} \label{dxh-eqn}
\begin{split}
iH &D_\alpha^3 w(t,\alpha)  \\ 
& =  \frac{1}{2 \pi}\iint  e^{-i\beta \xi} e^{i \phi(t, \alpha, \xi)} 
 \Big( |  D_{\alpha}|^3 + 3 | D_{\alpha}|^2 | \phi_\alpha| + 3 | D_{\alpha}| |\phi_\alpha|^2  
+ |\phi_\alpha|^3  \\ & \qquad \qquad \qquad \qquad \qquad \qquad 
+3i(| D_{\alpha} | + |\phi_\alpha|) \phi_{\alpha\alpha} 
- \phi_{\alpha\alpha\alpha}   \Big) 
   f(\beta)  d\beta d\xi.
\end{split}
\end{equation}
For simplicity of exposition, in what follows, 
we take $D_\alpha$ and $\phi_\alpha$ to be positive and avoid the absolute value and the sign. 
Again, this is justified in Lemma \ref{freq-lemma} by showing that 
negative frequencies and positive frequencies do not interfere. 

Applying $Pw=0$ to the results in \eqref{D-eqn-1}-\eqref{dxh-eqn}, 
we obtain the following equation for $\phi$:
\begin{equation} \label{A-eqn-1'}
\begin{split}
 \phi_t^2 -i \phi_{tt}& + 2V(t, \alpha) ( \phi_\alpha \phi_t -i \phi_{t\alpha}  
-i\phi_\alpha \partial_t - \partial_t \partial_\alpha )  \\
&+ V^2(t,\alpha)(\phi_\alpha^2 -i \phi_{\alpha\alpha} )-(\phi_\alpha^3 
 - 3i\phi_{\alpha} \phi_{\alpha\alpha}
 - \phi_{\alpha\alpha\alpha}) = 0.
\end{split}
\end{equation} 

In solving the above equation, we only consider terms
that are produced when first-order derivatives fall on the phase function only.
They make a nonlinear equation for $\phi$, commonly referred to as 
the {\em eikonal} or {\em Hamilton-Jacobi} equation. 
For other applications, a lower order error may be required.  
This can be brought about by considering a linear {\em transport} equation for amplitudes as well;
see Remark \ref{R:ansatz}.

\subsection{Construction of the phase functions}\label{SS:phase}
This subsection concerns solving the nonlinear {\em Hamilton-Jacobi} equation 
and determining the phase functions.

\begin{lemma}[The Hamilton-Jacobi equation]
\label{HJ-lemma}
Given a coefficient function $V$ in a bounded subset of $H^{l}([0,T])H^{k}(\reals)$
for some $T>0$ and for $l,k \gg 1$ sufficiently large, and 
given $j \geq j_0$ with $2^{j_0}\geq M>0$ sufficiently large, the following equation 
\begin{equation} \label{phi-hj-eqn}
\phi_t^2(t,\alpha,\xi) +  2V(t,\alpha) \phi_t(t,\alpha,\xi) \phi_\alpha(t,\alpha,\xi) 
+ V^2(t,\alpha)  \phi_\alpha^2(t,\alpha,\xi) -  \phi_\alpha^3(t,\alpha,\xi) = 0
\end{equation}
with the initial condition 
\[
\phi(0,\alpha,\xi) = \alpha \xi
\]
has two solutions $\phi^{j,\pm}$ for $2^{j-2} \leq \xi \leq 2^{j+2}$
on the time interval $0 \leq t \leq  2^{-j/2}T$.
Moreover, 
\begin{equation}
\phi^{j,\pm}(t,\alpha,\xi) = \alpha \xi \pm \xi^{3/2} (t +\vartheta^{j,\pm} (t, \alpha, \xi)) \label{phi-est}
\end{equation}
where $$\vartheta^{j,\pm}(t,\alpha,\xi) = \O(t (|V(t,
\alpha)| + |V(t, \alpha)|^2))  \in W^{l,\infty}_{2^{-j/2}T}\s_{k}^{0}$$ satisfies
$$ | \partial_\xi^{m'} \partial_\alpha^{k'} \vartheta^{\pm} | \leq C_{m'} 2^{-j/2}
\lll \xi \rrr^{-m'} \qquad \text {for $k' \leq k-1$}.$$
The derivatives of $\phi^{j,\pm}$ have the following properties  
\begin{align} 
\phi_\alpha^{j,\pm} (t,\alpha,\xi) = & \xi \pm \xi^{3/2} \vartheta^{\pm}_\alpha(t, \alpha, \xi) 
=  \xi (1 + \O(t)) \in W_{2^{-j/2}T}^{l,\infty}\s_{k-1}^1, \label{phi-est-1}  \\
\phi^{j,\pm}_t (t,\alpha,\xi)  =&  \pm \xi^{3/2}(1 +\vartheta_t^{\pm} (t,  \alpha, \xi)) 
 =\xi^{3/2}(1+\mathcal{O}(\xi^{-1/2}))  \in W^{l, \infty}_{{2^{-j/2}T}} \s_{ k-1}^{3/2} \label{phi-est-5}
\end{align}
for $2^{j-2} \leq \xi \leq 2^{j+2}$.
That is, $\phi_\alpha^{j,\pm}$ is of order $\xi$ on the dyadic frequency band $2^{j-2} \leq \xi \leq 2^{j+2}$ 
and on the small frequency-dependent time scale $0 \leq t \leq  2^{-j/2}T$ 
and $\phi_t^{j,\pm}$ is of order $\xi^{3/2}$. 
\end{lemma}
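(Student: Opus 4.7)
The plan is to factor the equation, reduce it to a pair of scalar first-order Hamilton--Jacobi equations, and solve each by the method of characteristics on the frequency-dependent time scale.

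First, observe that the left-hand side of \eqref{phi-hj-eqn} factors as $(\phi_t + V\phi_\alpha)^2 - \phi_\alpha^3$. At $t = 0$ we have $\phi_\alpha = \xi > 0$ on the positive dyadic band (the band $\xi < 0$ is symmetric), so by continuity we can take square roots on a short time interval and reduce \eqref{phi-hj-eqn} to the two scalar equations
\[
\phi_t + V(t,\alpha)\phi_\alpha = \pm\phi_\alpha^{3/2},
\]
one for each sign, producing the two phases $\phi^{j,\pm}$.

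Second, with Hamiltonian $H^\pm(t,\alpha,p) = Vp \mp p^{3/2}$ the bicharacteristic system reads
\[
\dot\alpha = V \mp \tfrac{3}{2}p^{1/2}, \qquad \dot p = -V_\alpha\,p, \qquad \dot\phi = \mp\tfrac{1}{2}p^{3/2},
\]
with $\alpha(0) = \alpha_0$, $p(0) = \xi$, $\phi(0) = \alpha_0\xi$. On the band $p \sim \xi \sim 2^j$ the term $\mp\tfrac{3}{2}p^{1/2}$ dominates $\dot\alpha$, so rescaling $s = \xi^{1/2}t$ and $Q = p/\xi$ converts the system into
\[
\frac{dA}{ds} = \xi^{-1/2}V \mp \tfrac{3}{2}Q^{1/2}, \qquad \frac{dQ}{ds} = -\xi^{-1/2}V_\alpha\,Q,
\]
a perturbation of size $\xi^{-1/2} \lesssim 2^{-j_0/2}$ of the explicitly solvable model $Q \equiv 1$, $A = \alpha_0 \mp \tfrac{3}{2}s$. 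Picard iteration on $s \in [0,T]$ (equivalently $t \in [0, 2^{-j/2}T]$) yields a unique solution for $2^{j_0}$ large enough in terms of $\|V\|_{W^{1,\infty}}$. Since $\partial A/\partial\alpha_0 = 1 + \O(\xi^{-1/2}\|V_\alpha\|_\infty)$ is close to $1$, the implicit function theorem inverts $\alpha_0 \mapsto \alpha(t,\alpha_0,\xi)$, and integrating $\dot\phi$ along the characteristic defines
\[
\phi(t,\alpha,\xi) = \alpha_0(t,\alpha,\xi)\,\xi \mp \tfrac{1}{2}\int_0^t p(\tau)^{3/2}\,d\tau.
\]
A direct check shows that $V \equiv 0$ produces $\phi = \alpha\xi \pm \xi^{3/2}t$ exactly, so writing $\phi^{j,\pm} = \alpha\xi \pm \xi^{3/2}(t + \vartheta^{j,\pm})$ makes $\vartheta^{j,\pm}$ an explicit functional of $V$ that vanishes with $V$; a Taylor expansion of the rescaled flow around the free solution yields $\vartheta^{j,\pm} = \O(t(|V| + |V|^2))$ and the leading-order formulas \eqref{phi-est-1} and \eqref{phi-est-5}.

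The main technical task, and the main obstacle, is to verify the rough-symbol bound $|\partial_\xi^{m'}\partial_\alpha^{k'}\vartheta^{j,\pm}| \leq C_{m'} 2^{-j/2}\langle\xi\rangle^{-m'}$ for $k' \leq k - 1$ together with the $W^{l,\infty}_t$ regularity. I would differentiate the rescaled characteristic system in $\alpha_0$ and in the parameter $\xi^{-1/2}$, obtaining linear variational ODEs whose coefficients and forcings involve derivatives of $V$; Gronwall on $s \in [0,T]$ produces uniform bounds. Each $\xi$-derivative contributes $\langle\xi\rangle^{-1}$ through the $\xi^{-1/2}$-dependence of the rescaled system and the relation $t = s\xi^{-1/2}$, matching the symbol decay; each $\alpha$-derivative consumes one derivative of $V$, explaining the loss to $k' \leq k-1$. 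Time regularity is extracted by differentiating $\phi_t = -V\phi_\alpha \pm \phi_\alpha^{3/2}$ in $t$ repeatedly and expressing $\partial_t^j\vartheta$ algebraically in lower-order $t$-$\alpha$-derivatives of $\vartheta$ and time-derivatives of $V$. The delicate bookkeeping needed to preserve the $2^{-j/2}$-smallness uniformly in $j \geq j_0$ while staying within the $H^l_tH^k_\alpha$ regularity budget on $V$ is the most involved part of the argument.
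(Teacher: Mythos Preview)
Your approach is correct and closely parallels the paper's, but with two presentational differences worth noting. First, after factoring \eqref{phi-hj-eqn} into the two first-order equations \eqref{phi-pm-root-eqn} (exactly as you do), the paper substitutes the ansatz $\phi=\alpha\xi+\xi^{3/2}(t+\vartheta)$ \emph{before} passing to characteristics, and then rescales the spatial variable via $\vartheta(t,\alpha,\xi)=\tilde\vartheta(t,2^{-j/2}\alpha,\xi)$; the resulting Hamiltonian system \eqref{Ham-system-1} for $\tilde\vartheta$ has Lipschitz constants $\sim 2^{j/2}$ and hence lives on $t\in[0,2^{-j/2}T]$. Your time rescaling $s=\xi^{1/2}t$ is the equivalent move, converting this to an $O(1)$ system on a bounded $s$-interval; neither choice is deeper than the other. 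Second, having solved the bicharacteristic ODEs, the paper packages the construction of $\phi$ (equivalently $\tilde\vartheta$) via the Lagrangian-submanifold / Poincar\'e-lemma argument for generating functions, whereas you integrate $\dot\phi=\mp\tfrac12 p^{3/2}$ along characteristics and invert $\alpha_0\mapsto\alpha$ directly; these are two standard presentations of the same construction, and your version is arguably more elementary. The invertibility step and the Gronwall bookkeeping for the derivative bounds (which you correctly flag as the main technical content) appear in the paper's Appendix and match what you outline.
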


The class of rough symbols $\s_{ k}^m$ is defined in the previous subsection.

We recall that only $\phi=\phi^{j,+}$ is considered in writing \eqref{A-eqn-1'}. 
That enforces $\xi$ be positive, which is tacitly assumed 
in this subsection and the following one.

\begin{remark}\rm
A few comments are needed about Lemma \ref{HJ-lemma}.  

First, 
the result applies equally well on any time interval $I$ of length $2^{-j/2}T$
which is in the domain of $V$.  The initial conditions for
$\phi^\pm$ are then prescribed at the beginning of $I$ rather than at $0$.

Second, since \eqref{phi-hj-eqn} is quadratic in $\phi_t$, there are two phases $\phi^\pm$.  
This may be thought of as an analog of incoming/outgoing solutions 
of the wave equation, although we do not exercise that level of sophistication here.
We write the two branches of \eqref{phi-hj-eqn} as
\begin{equation}\label{phi-pm-root-eqn}
\phi^\pm_t = -V(t,\alpha)   \phi^\pm_\alpha \pm (\phi^\pm_\alpha)^{3/2}, 
\end{equation}
and we will work on the ``factorized'' Hamilton-Jacobi equation in sequel.

A standard method of existence for a Hamilton-Jacobi equation 
of the kind of \eqref{phi-pm-root-eqn} would be 
to construct $\phi^\pm$ as a generating function of a symplectic transformation 
which arises as a solution of the corresponding system of ordinary differential equations
\[
 \begin{cases}
\dot{\alpha} =  -V(t,\alpha)  + \frac{3}{2} \eta^{1/2}, \\
\dot{\eta} =   V_\alpha(t,\alpha) \eta
\end{cases}
\]
with the initial condition
\[
\alpha(0) = \beta, \qquad \eta(0) = \zeta
\]
for each $\beta, \zeta \in \reals$.  
Here and elsewhere, the dot above a variable denotes the differentiation with respect to the $t$-variable.
This system has a solution only for 
a time scale comparable to $\xi^{-1/2}$, which is what we are after,
but we desire a finer control. 

The idea lies in that \eqref{phi-pm-root-eqn} is sought of as  perturbation 
of $\phi_t^\pm=(\phi_\alpha^\pm)^{3/2}$ under a lower-order term.
With the initial condition $\phi(0,\alpha,\xi)=\alpha \xi$ the solutions of this equation 
are found explicitly to be $\phi_0(t,\alpha, \xi)=\alpha \xi \pm t \xi^{3/2}$.
Then, it seems reasonable to find solutions to \eqref{phi-pm-root-eqn}
as a perturbation of  $\phi_0$.
The added term in \eqref{phi-pm-root-eqn}, while being of lower order, 
destroys the homogeneity of the equation 
and it causes serious difficulties in the application of the Hamilton-Jacobi theory.  
\end{remark}

\begin{proof}
For simplicity of exposition, we will suppress the dependence of the phase 
on dyadic frequencies $\xi \sim 2^j$ and we will prove for the $+$ sign only.
Let $\phi = \phi^{j,+}$ denote the solution; the proof for the $-$ sign is identical.  

Let us consider the initial value problem
\begin{equation}\label{phi-eqn-root}
\phi_t =  -V(t,\alpha) \phi_\alpha + \phi_\alpha^{3/2}, \qquad
\phi (0,\alpha,\xi) = \alpha \xi, 
\end{equation}
where $\phi$ is a function of  $t, \alpha$ with a parameter $\xi$. 
As is remarked above,  we construct the solution as a perturbation of 
\[
\phi_0(t,\alpha,\xi)=\alpha \xi+ t\xi^{3/2},
\]
which is a solution to a homogeneous equation 
$\phi_t=\phi_\alpha^{3/2}$ with the same initial condition. 
Specifically, we make the ansatz
\[
\phi(t,\alpha,\xi)=\alpha \xi+ \xi^{3/2} (t +\vartheta(t,\alpha,\xi)),
\]
where 
\[
\vartheta(t,\alpha,\xi)=\tilde{\vartheta}(t,2^{-j/2}\alpha,\xi) 
\quad \text{and} \quad \tilde{\vartheta} \in \s_k.\]

Substituting our ansatz for $\phi$ into \eqref{phi-eqn-root} yields 
the following initial value problem 
\begin{equation}\label{gamma-HJ-eqn}
\begin{cases}
\tilde{\vartheta}_t =-\xi^{-1/2}V(t,2^{j/2}\alpha) (1+2^{-j/2}
\xi^{1/2}\tilde{\vartheta}_\alpha)+(1+2^{-j/2} \xi^{1/2}\tilde{\vartheta}_\alpha)^{3/2}-1, \\
\tilde{\vartheta}(0,\alpha,\xi)=0
\end{cases}
\end{equation}
where $\tilde{\vartheta}$ is evaluated at $(t,\alpha,\xi)$.
The corresponding Hamiltonian is
\[
\mathfrak{q}(t,\alpha,\eta) = - \xi^{-1/2} V(t,2^{j/2}\alpha) (1+2^{-j/2} \xi^{1/2}\eta) 
+ (1+2^{-j/2} \xi^{1/2}\eta)^{3/2}-1
\]
and the corresponding Hamiltonian system is
\begin{subequations}\label{Ham-system-1} 
\begin{gather}
\begin{cases}
\dot{\alpha} ={\displaystyle \frac{\partial \mathfrak{q}}{\partial \eta} }
= -2^{-j/2}V(t,2^{j/2} \alpha)  + \frac{3}{2} 2^{-j/2} \xi^{1/2}(1+2^{-j/2} \xi^{1/2}\eta)^{1/2}, \\
\dot{\eta} ={\displaystyle -\frac{\partial \mathfrak{q}}{\partial \alpha} }
=  2^{j/2}\xi^{-1/2}V_\alpha(t,2^{j/2}\alpha) (1+2^{-j/2} \xi^{1/2}\eta)
\end{cases} 
\intertext{with the initial conditions} 
\alpha(0) = \beta \quad \text{and}\quad \eta(0) = \zeta,
\end{gather}
\end{subequations}
where $\beta, \zeta \in \mathbb{R}$ and $\zeta \in [-\epsilon,\epsilon]$ for some $\epsilon>0$.  
We recall that the dot above a variable denotes the differentiation with respect to the $t$-variable.
Since under the assumption of $\xi \sim 2^j$ the right sides of (\ref{Ham-system-1}a)
satisfy the Lipschitz condition with respect to $\alpha$ and $\eta$ 
with the Lipschitz constants comparable to $2^{j/2}$, 
it is standard from the theory of ordinary differential equations  
that a unique solution of \eqref{Ham-system-1} 
exists on the time interval $0\leq t \leq  2^{-j/2}T$ 
for the range of the initial conditions given above, 
and the solution is at least as smooth as the right hand side.

We write $\alpha = \alpha^t(\beta, \zeta)$, $\eta = \eta^t(\beta, \zeta)$, and 
$\kappa^t(\beta, \zeta) = (\alpha,\eta)$
for the symplectomorphism given by the solution of \eqref{Ham-system-1}. 
Let $\omega$ be the $1$-form
\[
\omega = - \tau dt + \eta d\alpha + \beta d \zeta,
\]
and let $\Lambda$ be the surface
\[
\Lambda = \{ (t, \mathfrak{q}(t,\kappa^t(\beta, \zeta)), \kappa^t(\beta, \zeta), \beta , \zeta):
(t, \beta, \zeta) \in [0,2^{-j/2} T]_t \times \reals_\beta \times [-\epsilon, \epsilon]_\zeta \}.
\]
Since $\Lambda$ is a graph 
it is an embedded three-dimensional submanifold of $T^*\reals^3$. 
Since $d \omega$ is a symplectic structure on $T^* \reals^3$
and since the fact that $\kappa^t$ is symplectic implies $d \omega |_{\Lambda} =0$, additionally, 
$\Lambda$ is a Lagrangian submanifold. 
Such an embedded Lagrangian submanifold $\Lambda$ can be written as the graph of a closed
$1$-form, say $\sigma(t, \beta, \zeta)$.  Since $\reals^3$ is simply connected, the
Poincar\'e lemma implies that there exists $\tilde{\vartheta}(t, \beta, \zeta)$ such that
\[d \tilde{\vartheta}  = \sigma.\]

We will prove in Appendix \ref{A:parametrix} that the mapping 
\begin{equation}\label{y-x}
\beta \mapsto \alpha^t(\beta, \zeta)
\end{equation}
is invertible for each $\zeta \in [-\epsilon, \epsilon]$ and $0\leq t \leq 2^{-j/2}T$,
and we write $\beta =\beta(\alpha,\zeta)$.  
Comparing $\omega$ with $d \tilde{\vartheta}$ written in the $(t,\alpha,\zeta)$ coordinates,
we find that
\[
d \tilde{\vartheta} = -\tau dt +  \eta d\alpha + \beta d \zeta.
\]
This implies 
\be
\frac{\partial \tilde{\vartheta} }{\partial \zeta} = \beta(\alpha, \zeta), \quad
\frac{\partial \tilde{\vartheta} }{\partial \alpha} = \eta(\beta(\alpha, \zeta), \zeta), \quad \text{and}\quad 
\frac{\partial \tilde{\vartheta} }{\partial t} = \mathfrak{q}(t, \alpha, \eta),
\ee
which, in turn, implies for $\zeta \in [-\epsilon,\epsilon]$,
\begin{multline*}
\tilde{\vartheta}_t(t,\alpha, \zeta) 
 = -\xi^{-1/2} V(t,2^{j/2}\alpha)
    (1+2^{-j/2} \xi^{1/2}\tilde{\vartheta}_\alpha(t,\alpha, \zeta)) \\
+ (1+2^{-j/2} \xi^{1/2}\tilde{\vartheta}_\alpha)^{3/2}(t,\alpha, \eta) -1.
\end{multline*}
Therefore, \eqref{phi-eqn-root} follows once we substitute for
$\vartheta$ and $\phi$. 

The estimates \eqref{phi-est-1} and \eqref{phi-est-5} follow easily 
by differentiating the relations \eqref{phi-eqn-root} and
\eqref{Ham-system-1}, using the timescale $0 \leq t \leq 2^{-j/2}T$
and substituting $\vartheta (t,\alpha,\xi) =\tilde{\vartheta}(t,2^{-j/2} \alpha, \xi)$.
This completes the proof.
\end{proof}

\subsection{Finishing up the construction: recovery of initial conditions}
\label{SS:parametrix}

It remains to determine $f^{j,\pm}$ to satisfy the initial conditions
\[w^j(0,\alpha)=u_0^j(\alpha), \qquad D_t w^j(0,\alpha)=-iu_1^j(\alpha).\] 
Since $\phi^{j,\pm}(0,\alpha,\xi)=\alpha\xi$ it follows that 
\begin{align*}
w^j(0,\alpha)
& =  \frac{1}{2 \pi}\iint e^{i(\alpha-\beta) \xi}
(  f^{j,+}(\beta)  +  f^{j,-}(\beta)) d\beta\, d\xi  = f^{j,+}( \alpha) + f^{j,-}(\alpha), \\
D_t w^j (0,\alpha) 
 & =  \frac{1}{2 \pi}\iint e^{i(\alpha-\beta) \xi}
( \phi^{j,+}_t (0,\alpha, \xi) f^{j,+}(\beta)  + \phi^{j,-}_t (0,\alpha, \xi) f^{j,-}(\beta))d\beta  \,d \xi\\
& =:  A^{j,+}_1(\alpha,D_\alpha) f^{j,+} + A^{j,-}_1(\alpha,D_\alpha) f^{j,-} .
\end{align*}
Thus, we are led to the elliptic system for $f^{j,\pm}$ as 
\[
\begin{cases}
 f^{j,+} +  f^{j,-} = \tu_0, \\
A^{j,+}_1(\alpha,D_\alpha) f^{j,+} + A^{j,-}_1(\alpha,D_\alpha) f^{j,-} =  -i \tu_1.
\end{cases}
\]

The results of Lemma \ref{HJ-lemma} state that 
\[
\phi_t^{j,\pm}(0,\alpha,\xi)=\pm \xi^{3/2}(1+\O(\xi^{-1/2})) \in \s_{k-1}^{3/2}
\]
behave like the classical symbol $\pm \xi^{3/2}$. 
Consequently, $A^{j,\pm}_1$ are elliptic pseduodifferential operators,
which are approximately $\pm D_\alpha^{3/2}$,
and they have approximate inverses, denoted by $(A^{j,\pm}_1)^{-1}$.    
We only pause here to note that
the existence of approximate inverses here means the existence of {\em honest} inverses 
with the same estimates as the approximate inverse maps.  
Indeed, the error involved in our setting is $\O(\xi^{-1/2})$, which, 
in the dyadic-frequency band $\xi \sim 2^j$ is $\O(2^{-j/2})$.  
Hence the approximate inverse mapping has an
error which is bounded by $\O(2^{-j/2})$ on the appropriate Hilbert
space and the honest inverse can be obtained by the natural Neumann series.

Therefore, the operators $1-(A_1^{j,-})^{-1}A_1^{j,+}$ and $1-(A_1^{j,+})^{-1}A_1^{j,-}$
are bounded on $L^2$ and invertible with $L^2$ bounded inverses.  We set
\begin{align*}
f^{j,+} = & (1-(A_1^{j,-})^{-1}A_1^{j,+})^{-1}(\tu_0+i(A_1^{j,-})^{-1}\tu_1),\\
f^{j,-} = & (1-(A_1^{j,+})^{-1}A_1^{j,-})^{-1}(\tu_0+i(A_1^{j,+})^{-1}\tu_1).
\end{align*}
Then, \eqref{Es:f^pm} follows immediately.

This completes the construction of the parametrix and the proof of Proposition \ref{parametrix-prop-2}.

\subsection{The Fourier integral operators}

We establish the basic boundedness property and
propagation of singularities for our leading-order parametrix.  
Let us first establish a basic $L^2$-boundedness result.

\begin{lemma}
\label{L2-L2-bdd}
Let $F(t)$ for $0\leq  t  \leq 2^{-j/2}T$ be defined, initially on the Schwartz class, by the formula
\[
F(t)f(\alpha) = \iint e^{-i\beta \xi} e^{i \phi^{j} (t, \alpha, \xi)}
f(\beta)\, d\beta  d \xi 
\]
where $\phi^j$ is either of $\phi^{j,\pm}$ and is constructed in Lemma \ref{HJ-lemma}.  
Then $F(t)$ extends to a
bounded linear operator $L^2_\alpha \to L^2_\alpha$ and
\[
\| F(t) \|_{L^2_\alpha \to L^2_\alpha} \leq 1 + \O(t).
\]
\end{lemma}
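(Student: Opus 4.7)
The plan is to prove this $L^2$ bound via the $TT^*$ method, realizing $F(t)F(t)^*$ as a pseudodifferential operator whose symbol is close to $1$, and then applying the $L^2$ boundedness for rough-symbol PDOs (Lemma \ref{ex-L2-lemma}).

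First, I compute the kernel of $F(t)F(t)^*$. Carrying out the $\beta$-integration produces a Dirac mass $\delta(\xi - \xi')$, and after integrating out $\xi'$ one finds (up to an overall normalization constant absorbed into the convention of the parametrix)
\[
F(t)F(t)^* g(\alpha) = \int K(t,\alpha,\alpha') g(\alpha')\, d\alpha',
\qquad K(t,\alpha,\alpha') = \int e^{i[\phi^j(t,\alpha,\xi)-\phi^j(t,\alpha',\xi)]}\, d\xi,
\]
where the $\xi$-integration is implicitly restricted to the dyadic band $|\xi|\sim 2^j$ on which $\phi^j$ is defined. Next, writing
\[
\Phi(t,\alpha,\alpha',\xi) = \int_0^1 \phi^j_\alpha(t,\alpha' + s(\alpha-\alpha'),\xi)\, ds,
\]
I factor $\phi^j(t,\alpha,\xi)-\phi^j(t,\alpha',\xi) = (\alpha-\alpha')\Phi(t,\alpha,\alpha',\xi)$. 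By \eqref{phi-est-1}, $\phi^j_\alpha = \xi(1+\mathcal{O}(t))$, so $\Phi_\xi = 1 + \mathcal{O}(t)$ uniformly in $(\alpha,\alpha')$.

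Second, for $t$ small enough (depending on $\|V\|$), the map $\xi \mapsto \eta = \Phi(t,\alpha,\alpha',\xi)$ is a diffeomorphism of the dyadic band $|\xi|\sim 2^j$ onto a slightly deformed band $|\eta|\sim 2^j$, with inverse $\xi = \Xi(t,\alpha,\alpha',\eta)$ and Jacobian $1/\Phi_\xi|_{\xi=\Xi} = 1 + \mathcal{O}(t)$. After this change of variables,
\[
K(t,\alpha,\alpha') = \int e^{i(\alpha-\alpha')\eta}\, b(t,\alpha,\alpha',\eta)\, d\eta,
\]
with double symbol $b(t,\alpha,\alpha',\eta) = \chi^j(\Xi(\eta))/\Phi_\xi(t,\alpha,\alpha',\Xi(\eta))$, where $\chi^j$ is a smooth cutoff to the dyadic band. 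Using the estimates in Lemma \ref{HJ-lemma} on $\vartheta^{j,\pm}$ and its derivatives, direct differentiation shows $b \in \s_k^0$ uniformly in $(t,\alpha,\alpha')$ on the dyadic band in $\eta$, with $b = \chi^j(\eta)\cdot(1+\mathcal{O}(t))$ and all $\xi$-like derivatives satisfying $|\partial_\eta^{m'} b| \leq C_{m'}\langle\eta\rangle^{-m'}$ (the cancellations come from the fact that, on $\xi\sim 2^j$ and $t\leq 2^{-j/2}T$, each differentiation of $\vartheta$ in $\xi$ produces an extra $\langle\xi\rangle^{-1}$).

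Third, the operator with kernel $K$ is a pseudodifferential operator of double-symbol type. Via the standard reduction (Taylor expansion in $\alpha'$ about $\alpha$, justified here within the rough-symbol class because $\vartheta \in W^{l,\infty}_{2^{-j/2}T}\s_k^0$ supplies the required $W^{k,\infty}_\alpha$ regularity), one obtains an equivalent left-quantized symbol
\[
a(t,\alpha,\eta) = b(t,\alpha,\alpha,\eta) + \text{l.o.t.} = \chi^j(\eta)\bigl(1 + \mathcal{O}(t)\bigr),
\]
with the lower-order terms contributing seminorms of size $\mathcal{O}(t)$ in $\s_{k-1}^{-1}$. Applying Lemma \ref{ex-L2-lemma} yields $\|F(t)F(t)^*\|_{L^2\to L^2} \leq (1+\mathcal{O}(t))^2$, hence $\|F(t)\|_{L^2_\alpha\to L^2_\alpha} \leq 1 + \mathcal{O}(t)$.

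The main obstacle is step three: confirming that the $(\alpha,\alpha')$-derivatives needed in the reduction from double symbol to left symbol all lie within the $W^{k,\infty}_\alpha$ control provided by $\vartheta \in \s_k^0$, and that the residual terms are indeed $\mathcal{O}(t)$ in operator norm on the short interval $[0,2^{-j/2}T]$. Everything else — the change of variables, the symbolic estimates of $b$, and the final application of $L^2$ boundedness — is quantitative bookkeeping with the estimates from Lemma \ref{HJ-lemma}.
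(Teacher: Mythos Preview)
Your approach is essentially the same as the paper's: compute $F(t)F(t)^*$, write the phase difference $\phi^j(t,\alpha,\xi)-\phi^j(t,\alpha',\xi)=(\alpha-\alpha')\xi(1+\tilde\vartheta^j)$ with $\tilde\vartheta^j=\O(t)$, change variables $\eta=\xi(1+\tilde\vartheta^j)$ to obtain a pseudodifferential operator with double symbol $A(t,\alpha,\alpha',\xi)=1+\O(t)$, and conclude by Calder\'on--Vaillancourt. The paper invokes Calder\'on--Vaillancourt directly on the double symbol (noting that $\partial_\alpha^{k_1}\partial_{\alpha'}^{k_2}A=2^{-j(k_1+k_2)/2}\O(t)$, which supplies the needed regularity), whereas you first reduce to a left symbol and then apply Lemma~\ref{ex-L2-lemma}; this is a cosmetic difference, and your identification of step three as the place where the $W^{k,\infty}_\alpha$ control on $\vartheta$ is spent is exactly right.
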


The proof of the lemma shows that $F(t)$ is {\it almost unitarity} point-wise in $t$.  
  If $\phi^\pm$ have been constructed on an
interval of length $2^{-j/2}T$ in the domain of $V(t,\alpha)$, 
then the result holds equally true for each point on the interval.

\begin{proof}
We will prove the assertion for $\phi^j = \phi^{j,+}$, the proof for $\phi^{j,-}$ being analogous.  
Further, We will prove the estimate for $F^*$ rather than $F$, 
since some nontrivial cancellation makes the proof much easier.  
Indeed, we will show that $F^*(t) = F^{-1} A(\alpha, 2^{-j/2} D_\alpha)$ 
for a pseudodifferential operator $A$ with rough symbol, $A = 1 + \O(t)$.  Since
\begin{align*}
\| F^*(t) f \|_{L^2_\alpha}^2  = \lll F(t) F^*(t) f, f \rrr 
 \leq \| F(t) F^*(t) f \|_{L^2_\alpha} \| f \|_{L^2_\alpha},
\end{align*}
it suffices to prove the assertion for 
\begin{equation}
\label{F*F}
F(t) F^*(t) f(\alpha) = \iint e^{i(\phi^j(t,\alpha,\xi) -
  \phi^j(t,\alpha',\xi))} f(\alpha') d \alpha' d \xi.
\end{equation}
Recalling the results of $\phi$ from Lemma \ref{HJ-lemma}, we have
\begin{align*}
\phi^j(t,\alpha,\xi) -  \phi^j(t,\alpha',\xi) & = (\alpha - \alpha') \xi +
\xi^{3/2} (\vartheta^j(t, \alpha, \xi) - \vartheta^j(t, \alpha', \xi)) \\
& = (\alpha - \alpha')\xi(1 + \tilde{\vartheta}^j(t, \alpha, \alpha',\xi))
\end{align*}
where
$\tilde{\vartheta}^j = \O(t) \in  W^{l,\infty}_{2^{-j/2}T}\s_{k-1}^0$
satisfies
\[
\partial_\alpha^{k_1} \partial_{\alpha'}^{k_2} \tilde{\vartheta}^j = 2^{-j(k_1+k_2)/2}\O(t)
\]
for $k_1 + k_2 \leq k-1$.  
We perform the change of variables $\eta = \xi(1 + \tilde{\vartheta}^j(t, \alpha, \alpha',\xi))$ 
in \eqref{F*F} to obtain
\[
F(t) F^*(t) f(\alpha) = \iint e^{i(\alpha - \alpha') \xi} A(t, \alpha,
\alpha', \xi) f(\alpha') d \alpha' d \xi,
\]
where a symbol
\[
A (t,\alpha,\alpha',\xi)= 1 + \O(t) \in W^{l,\infty}_{2^{-j/2}T}\s_{k-1}^0
\]
satisfies
\[
\partial_\alpha^{k_1} \partial_{\alpha'}^{k_2} A = 2^{-j(k_1+k_2)/2}\O(t) 
\qquad \text{for $1 \leq k_1+k_2 \leq k-1$.}
\] 
Then the Calder\'on-Vaillancourt theorem implies the assertion.
\end{proof}

The following lemma regarding the Fourier integral operator related to
\eqref{w0-ansatz} shows how to pass derivatives through the
oscillatory integral, and it also justifies localizing to positive or negative $\xi$.

\begin{lemma}\label{freq-lemma}
Let $m \in \reals$, $l \geq2$, $k\geq 3$ and let $j \geq j_0\geq 1$ with $2^{j_0}\geq M$ sufficiently large.
Let $\phi^{j,\pm}(t,\alpha,\xi) \in W^{l,\infty}_{2^{-j/2}T}\s^{3/2}_{ k}$ 
be as constructed in Lemma \ref{HJ-lemma}.

Suppose that $B(\alpha,\xi) \in \Psi_{k'}^m$, 
where $k' \geq k+4$.
Then, for any $f \in H^m_\alpha(\reals)$ satisfying the dyadic frequency localization \eqref{dy-loc}
the following
\begin{multline*} 
B(\alpha,D_\alpha) \iint e^{-i\beta \xi} e^{i \phi^{j,\pm} (t, \alpha, \xi)} f(\beta)\, d\beta  d \xi \\
=  \iint e^{-i\beta \xi} e^{i \phi^{j,\pm} (t, \alpha, \xi)} 2^{jm}\tB(t, \alpha, \xi)f(\beta)\, d\beta  d \xi 
+ (Ef)(t,\alpha)
\end{multline*}
holds, where 
$\tB \in W^{l-1,\infty}_{2^{-j/2}T} \s_{ k-1}^0$ is supported in $c_0'
2^j \leq  \xi  \leq c_1' 2^j $
for some $0 < c_0' < c_1'$ independent of $j$, and $Ef$ satisfies the
dyadic-frequency localization \eqref{dy-loc} and $Ef$ enjoys the estimate 
\be
 \| E f \|_{ L^2([0,2^{-j/2}T])L^2_\alpha} \leq C \|f
 \|_{H^{m-1}_\alpha}.\ee

Furthermore, if $\tpsi^j ( D_\alpha) \in \Psi^0_{k'}$ is equal to $1$ in
the dyadic region $2^{j-2} \leq |\xi|  \leq 2^{j+2}$, then
\[
\tpsi^j( D_\alpha)  \iint e^{-i\beta \xi} e^{i \phi^{j,\pm} (t, \alpha,
  \xi)} f(\beta)\, d\beta  d \xi = \iint e^{-i\beta \xi} e^{i \phi^{j,\pm} (t, \alpha,
  \xi)} f(\beta)\, d\beta  d \xi 
 \]
modulo a lower order error.
\end{lemma}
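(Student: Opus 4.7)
\emph{Strategy.} My plan is to compose $B(\alpha, D_\alpha)$ with the Fourier integral operator and apply stationary phase to collapse the resulting multiple integral back to an FIO of the original form. Using the left quantization formula for $B$ and inserting the ansatz for the parametrix, the composition becomes the four-fold oscillatory integral
\[
B(\alpha, D_\alpha)\Bigl[ \iint e^{-i\beta\xi + i\phi^{j,\pm}(t,\alpha,\xi)} f(\beta) d\beta d\xi \Bigr]
= \frac{1}{(2\pi)^2} \iint\iint e^{i\Phi} B(\alpha,\eta) f(\beta) d\gamma d\eta d\beta d\xi,
\]
with combined phase $\Phi = (\alpha-\gamma)\eta + \phi^{j,\pm}(t,\gamma,\xi) - \beta\xi$. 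I would regard $(\gamma,\eta)$ as the integration variables for stationary phase, with $(\beta,\xi)$ as parameters; the dyadic-frequency localization $\psi^j(D_\alpha)f = f$ restricts the effective support of $\xi$ to the band $|\xi|\sim 2^j$, providing the large parameter.

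\emph{Stationary phase and the leading symbol.} The critical equations $\partial_\gamma \Phi = -\eta + \phi^{j,\pm}_\gamma(t,\gamma,\xi) = 0$ and $\partial_\eta \Phi = \alpha - \gamma = 0$ admit the unique solution $\gamma_c = \alpha$, $\eta_c = \phi^{j,\pm}_\alpha(t,\alpha,\xi)$, and the $(\gamma,\eta)$-Hessian of $\Phi$ at this point is
\[
\begin{pmatrix} \phi^{j,\pm}_{\alpha\alpha}(t,\alpha,\xi) & -1 \\ -1 & 0 \end{pmatrix},
\]
which has determinant $-1$ and is therefore uniformly nondegenerate. Stationary phase collapses the $(\gamma,\eta)$ integration, yielding the leading term
\[
\frac{1}{2\pi} \iint e^{-i\beta\xi + i\phi^{j,\pm}(t,\alpha,\xi)} B(\alpha, \phi^{j,\pm}_\alpha(t,\alpha,\xi)) f(\beta) d\beta d\xi
\]
plus an error operator $E$. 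By Lemma \ref{HJ-lemma}, $\phi^{j,\pm}_\alpha(t,\alpha,\xi) = \xi(1 + \O(t))$ on the time scale $t\in[0,2^{-j/2}T]$, so on the dyadic band $2^{j-2}\leq \xi \leq 2^{j+2}$ one has $\phi^{j,\pm}_\alpha \in [c'_0 2^j, c'_1 2^j]$ for constants depending only on $T$ and $\|V\|$. Since $B \in \Psi_{k'}^m$, the factorization $B(\alpha, \phi^{j,\pm}_\alpha(t,\alpha,\xi)) = 2^{jm}\tilde{B}(t,\alpha,\xi)$ follows from the chain rule together with the symbol bounds \eqref{phi-est-1}--\eqref{phi-est-5}, and places $\tilde{B}$ in $W^{l-1,\infty}_{2^{-j/2}T}\s_{k-1}^0$ supported in the asserted band.

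\emph{Error, second claim, and main obstacle.} The remainder $Ef$ is a finite sum of Fourier integral operators with the same phase $\phi^{j,\pm}$ but whose symbols arise from $\partial_\eta^a \partial_\gamma^b B$ evaluated at the critical point; each term gains one power of $\xi^{-1}\sim 2^{-j}$ per order of expansion, so Lemma \ref{L2-L2-bdd} yields the bound $\|Ef\|_{L^2([0,2^{-j/2}T])L^2_\alpha} \leq C\|f\|_{H^{m-1}_\alpha}$. Because $Ef$ carries the phase $\phi^{j,\pm}$ and the integration in $\xi$ is restricted to $|\xi|\sim 2^j$, the dyadic-frequency localization \eqref{dy-loc} is preserved (one can always multiply by a slightly wider cutoff $\tilde{\psi}^j$ and absorb the difference into the error). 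The second assertion of the lemma then follows from the first by taking $B = \tilde{\psi}^j(\xi)$: the leading symbol $\tilde{\psi}^j(\phi^{j,\pm}_\alpha(t,\alpha,\xi))$ equals $1$ throughout the range $c'_0 2^j \leq \phi^{j,\pm}_\alpha \leq c'_1 2^j$ where the integrand is supported, so only the lower-order error survives. The principal technical obstacle is the rough, finite-regularity nature of both $B$ and $\phi^{j,\pm}$ in $\alpha$: one cannot invoke the smooth asymptotic stationary phase expansion but must truncate at a fixed order, using the hypothesis $k' \geq k+4$ to supply just enough derivatives for the integrations by parts in the non-stationary region and for verifying that the chain rule places $\tilde B$ in the rough class $\s^0_{k-1}$ rather than in a smooth class.
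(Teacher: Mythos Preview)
Your approach is correct and genuinely different from the paper's. The paper argues abstractly via Egorov's theorem: it extends $\phi^{j,\pm}$ to a global phase $\tilde\phi^j$, identifies the associated canonical transformation $\kappa^t$ (a lower-order perturbation of $(\alpha,\xi)\mapsto(\alpha+\tfrac32 t\xi^{1/2},\xi)$), and then invokes Egorov to conclude that conjugation by the FIO pulls back symbols along $\kappa^t$, so that $\sigma(\tB)=e\cdot(\kappa^t)^*\sigma(B)$ for an elliptic factor $e$. The dyadic preservation then follows because the $\xi$-component of $\kappa^t$ is $\xi(1+\O(t))$. By contrast, you compute the composition by hand via stationary phase in $(\gamma,\eta)$, obtaining the leading symbol $B(\alpha,\phi^{j,\pm}_\alpha(t,\alpha,\xi))$ directly. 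Your route is more elementary and self-contained --- it is essentially a proof of the special case of Egorov needed here --- while the paper's route is more conceptual and shorter once Egorov is taken as a black box. One point to be careful about in your expansion: the higher $\gamma$-derivatives of the phase satisfy $\partial_\gamma^k\phi = \xi^{3/2}\partial_\alpha^k\vartheta = \O(2^{j/2})$ for $k\geq 2$ (using the structure $\vartheta(t,\alpha,\xi)=\tilde\vartheta(t,2^{-j/2}\alpha,\xi)$ from the Hamilton--Jacobi construction), not $\O(1)$, so the bookkeeping of the gain per order in the stationary-phase remainder requires this scaling rather than the naive count; the net gain of $2^{-j}$ in the first remainder term does survive, but it is worth making the rescaling explicit.
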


As usual, the constant $C>0$ in the error estimate depends on up to $k$ derivatives in $\alpha$ of
$B(\alpha, \xi)$ and $k$ derivatives in $\alpha$ of $V(t,\alpha)$.
The requirement of 4 derivative comes from keeping track of the number of derivatives 
used to control the error terms in the Egorov theorem. 
Of course, the error estimate can be improved 
upon more careful use of the Egorov theorem.


\begin{proof}
As usual, we prove only for $\phi^j = \phi^{j,+}$; the proof for $\phi^{j,-}$ is similar.

Let us consider the oscillatory integral operator
\[
(Ff)(t,\alpha) = \iint e^{-i\beta \xi} e^{i \phi^j (t, \alpha, \xi)}  f(\beta)\, d\beta  d \xi .
\]
The results of Lemma \ref{HJ-lemma} say that
\[
\phi^j(t,\alpha,\xi)  = \alpha \xi + \xi^{3/2}(t + \vartheta^j(t, \alpha, \xi))
\]
with $\vartheta^j(t,\alpha,\xi)= \O(t) \in W^{l,\infty}_{2^{-j/2}T}\s_{k}$ and that 
\[ \phi^j_\alpha(t,\alpha,\xi) = \xi(1 + \O(t)) \in W^{l,\infty}_{2^{-j/2}T}\s_{k-1}^1.\] 
Since 
\begin{align*}
\phi^j_\xi (t,\alpha,\xi) = \alpha + \frac32 \xi^{1/2}(t + \vartheta^j(t,\alpha,\xi)) 
+ \xi^{3/2}  \vartheta^j_\xi(t,\alpha,\xi) ,
\end{align*}
it follows that
\begin{equation}\label{phi-x-xi-est}
\xi^2 \leq C \left( \left| \frac{ \partial \phi^j}{\partial \alpha}
  \right|^2 + \xi^2 \left| \frac{ \partial \phi^j}{\partial \xi}
  \right|^2 \right) \qquad \text{for}\quad  2^{j-2}\leq  \xi \leq 2^{j+2},
\end{equation}
where $C>0$ is independent of $\xi$.  In light of this and Lemma
\ref{L2-L2-bdd}, then $F$ is an elliptic Fourier integral operator.



Let us also choose $\chi_2( \xi )$ such that $\chi_2(\xi)=1$ on $[1/2,2]$ and it is supported in $[1/4, 4]$. 
Let $\chi_2^j(\xi) = \chi_2(2^{-j}\xi)$.
We define a modified phase function
\begin{equation}\label{E:tphi}
\tphi^j(t, \alpha, \xi) = \alpha \xi + t  \xi ^{3/2} + \chi_2^j(\xi) \vartheta^j(t, \alpha, \xi)  \xi^{3/2}.
\end{equation}  
Note that the modified phase function $\tphi^j$ is defined for all $\xi \geq M$ 
and $\tphi^j(t,\alpha,\xi)=\phi^j(t,\alpha,\xi)$ on the dyadic-frequncy band $2^{j-2} \leq \xi \leq 2^{j+2}$.

We note that the phase $\phi^j$ is the generating function of the symplectomorphism  
in the proof of Lemma \ref{HJ-lemma} in the dyadic band $2^{j-2} \leq
\xi \leq 2^{j+2}$, which is a lower order perturbation of the symplectomorphism
\begin{equation}
\label{tkappa}
 \alpha \mapsto \alpha + \frac32 t  \xi ^{1/2}, \qquad \xi \mapsto \xi. 
\end{equation}
The phase $\tphi^j$ generates the same symplectomorphism in the dyadic
region, and extends it to be \eqref{tkappa} in the rest of phase
space.  Let $\kappa^t$ be this extended symplectomorphism.

In light of the Egorov theorem \cite{Ego}, then
it follows that $F$ transforms symbols 
according to the symplectic transformation $\kappa^t$.

It remains to show that $\kappa^t$ maps dyadic frequencies to dyadic
frequencies and preserves the order of the symbol.  
Indeed, the $\xi$ component of $\kappa^t$ is $\xi(1 + \O(t ))$, whence 
\be
\{c_02^j \leq \xi  \leq c_1 2^j \} \subset
 \{ (\kappa^t)_2(\alpha, \xi) : c_0'2^j \leq \xi  \leq c_1' 2^j \} 
 \subset \{ c_0''2^j \leq \xi  \leq c_1'' 2^j \}
\ee
for some positive constants $c_0<c_1, c_0'<c_1'$, and $c_0''<c_1''$,
where $(\kappa^t)_2$ denotes the second component of $\kappa^t$.

Therefore, for any pseudodifferential operator $B \in \Psi_{k'}^m$, where $k' \geq k+4$, it follows that
\be
B(\alpha,D_\alpha) \iint  e^{-i\beta \xi} e^{i \phi^j (t, \alpha, \xi)}  f(\xi) d\beta d \xi 
=  \iint e^{-i\beta \xi} e^{i \phi^j (t, \alpha, \xi)} (\tB f)(\beta) d\beta d \xi
\ee
for some pseudodifferential operator $\tB \in \Psi_{k-1}^{m}$ with principal
symbol
\be
\sigma (\tB) = e(t,\alpha,\xi) (\kappa^t)^* \sigma(B),
\ee
where $e \in \s_{k-1}^0$ is elliptic on the support of $(\kappa^t)^*
\sigma(B)$.  This completes the proof.
\end{proof}

\begin{remark}\rm
The assertion of Lemma \ref{freq-lemma} holds true when replacing $\xi$ by $-\xi$. 
This justifies considering only positive $\xi$ in the construction of the parametrix in the previous section.
In what follows, we will drop the assumption that we work on positive $\xi$.
In other words, the phase functions take the form
\[
\phi^{j,\pm}(t,\alpha,\xi)=\alpha\xi \pm |\xi|^{3/2}(t+\vartheta^{j,\pm}(t,\alpha, \xi))
\]
and similarly for the amplitudes and others.
\end{remark}

\section{Strichartz estimates for the linearized equation}\label{SS:energy}

\def\tZ{\widetilde{Z}}

A dispersion estimate on a semiclassical time scale 
implies semiclassical Strichartz estimates for linearized water-wave problems under surface tension.  
Gluing these together Strichartz estimates on a fixed time scale
are established with loss in derivative. 

\subsection{Preparation for the proof}
Let us consider the initial value problem of the linear homogeneous equation 
\begin{equation}
\label{lin-ww-1}
\begin{cases} \partial_t^2u - H \partial_\alpha^3u +
    2V(t,\alpha) \partial_\alpha\partial_t u
     +  V^2(t,\alpha) \partial_\alpha^2 u  = 0, \\ 
u(0,\alpha) = u_0(\alpha) \quad \text{and} \quad
\partial_t u(0,\alpha) = u_1(\alpha),
\end{cases}
\end{equation}
where the coefficient function $V \in H^{l}([0,T])H^{k}_\alpha(\reals)$ is given 
for some $T >0$ fixed and for $l, k \geq 1$ sufficiently large,
Let us also consider the related 
initial value problem of the inhomogeneous equation with the zero data
\begin{equation}
\label{lin-ww-0-nh}
\begin{cases} \partial_t^2v - H\partial_\alpha^3v+  2V(t,\alpha) \partial_\alpha \partial_tv 
+ V^2(t,\alpha)\partial_\alpha^2v = R(t,\alpha), \\ 
v(0,\alpha) = 0 \quad \text{and} \quad \partial_t v(0,\alpha) = 0,
\end{cases}
\end{equation}
where $R \in L^2([0,T])H^s_\alpha( \reals )$ for some $s \geq 0$.

As a preliminary result, we establish the existence and uniqueness 
of the actual solutions of \eqref{lin-ww-1} and \eqref{lin-ww-0-nh} via the standard energy method.

\begin{theorem}[Existence and uniqueness for \eqref{lin-ww-1} and \eqref{lin-ww-0-nh}]\label{eu-thm}
Let $V \in H^{l}([0,T])H^{k}_\alpha(\reals)$ for some $T>0$ and for some $l,k>0$. 

For each pair of $u_0 \in H^{s+3/2}( \reals)$ and $u_1 \in H^{s}( \reals)$, where $0\leq s +3/2 \leq k$,
there exists a unique solution $u$ to \eqref{lin-ww-1} on the interval $0\leq t \leq T$ satisfying
\begin{equation}\label{Es:linear_energy0}
\| u \|_{L^\infty([0,T])H^{s +3/2}_\alpha(\reals)}+\|\partial_t u\|_{L^\infty([0,T])H^{s}_\alpha(\reals)} 
\leq C_1  ( \| u_0 \|_{H^{s+3/2}(\reals)} + \|u_1\|_{H^{s}(\reals)}).
\end{equation}
Furthermore, for each $R \in L^2([0,T])H^{s}_\alpha(\reals)$, 
there exists a unique solution $v$ to \eqref{lin-ww-0-nh} satisfying
\begin{equation}\label{Es:linear_energy1}
\| v_t \|_{L^\infty([0,T]) H^s_\alpha(\reals)} + \| v \|_{L^\infty([0,T])H^{s+3/2}_\alpha(\reals) } \leq C_2 \| R \|_{ L^2([0,T])H^{s}_\alpha(\reals)}.
\end{equation}
Here, $C_1, C_2>0$ are polynomial in $\|
V\|_{L^{\infty}([0,T])W^{s,\infty}_\alpha(\reals)}$ and $\| V_t \|_{L^{\infty}([0,T])W^{s,\infty}_\alpha(\reals)}$.
\end{theorem}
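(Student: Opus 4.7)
\medskip

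My plan is to prove Theorem~\ref{eu-thm} by a standard energy method, exploiting a symmetrization that makes the dispersive structure transparent.  The first observation is that $H\partial_\alpha^3 = -|D_\alpha|^3$ and that $\partial_t^2 + 2V\partial_\alpha\partial_t + V^2\partial_\alpha^2 = (\partial_t + V\partial_\alpha)^2 - (V_t+VV_\alpha)\partial_\alpha$, so \eqref{lin-ww-1} can be rewritten as
\[
(\partial_t + V\partial_\alpha)^2 u + |D_\alpha|^3 u = (V_t+VV_\alpha)\partial_\alpha u.
\]
This presents the equation as transport along $\partial_t + V\partial_\alpha$ coupled to the self-adjoint dispersion $|D_\alpha|^3$, with a first-order perturbation on the right.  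The natural symmetrized energy at order $s$ is then
\[
E_s(t) = \tfrac12\|(\partial_t + V\partial_\alpha)\partial_\alpha^s u\|_{L^2}^2 + \tfrac12\||D_\alpha|^{3/2}\partial_\alpha^s u\|_{L^2}^2,
\]
which is coercive and, modulo terms controlled by $\|V\|_{L^\infty_T W^{s,\infty}_\alpha}\|u\|_{H^{s+1}}$, is equivalent to $\|\partial_t u\|_{H^s}^2 + \|u\|_{H^{s+3/2}}^2$.

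The next step is to differentiate $E_s$ in $t$ and use the equation.  After applying $\partial_\alpha^s$ to the rewritten equation, I would compute $\frac{d}{dt}E_s$ and integrate by parts.  Two cancellations are essential: first, $\partial_t|D_\alpha|^{3/2}\partial_\alpha^s u$ pairs with $|D_\alpha|^{3/2}\partial_\alpha^s u$ to give exactly $\langle|D_\alpha|^3 \partial_\alpha^s u, \partial_t\partial_\alpha^s u\rangle$, which cancels the dispersive term produced by $(\partial_t + V\partial_\alpha)^2\partial_\alpha^s u = -|D_\alpha|^3 \partial_\alpha^s u + \cdots$; second, $\langle V\partial_\alpha f,f\rangle = -\tfrac12\langle V_\alpha f, f\rangle$ absorbs the skew-adjoint part of the transport into a bounded zero-order term.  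The residue consists of commutators $[\partial_\alpha^s, V\partial_\alpha]$, $[|D_\alpha|^{3/2}, V]$, and $[|D_\alpha|^{3/2}\partial_\alpha^s, V\partial_\alpha]$, each of which is of order $\leq s+1$ and estimable in $L^2$ by $C(\|V\|_{W^{s+1,\infty}_\alpha})(\|u\|_{H^{s+1}} + \|\partial_t u\|_{H^s})$ via standard Kato--Ponce commutator bounds.  Combining these yields a differential inequality
\[
\frac{d}{dt}E_s(t) \leq C\bigl(\|V\|_{L^\infty_T H^k_\alpha},\|V_t\|_{L^\infty_T H^{s}_\alpha}\bigr)\bigl(E_s(t) + \|R(t)\|_{H^s}^2\bigr) + \|R(t)\|_{H^s}\sqrt{E_s(t)},
\]
with $R\equiv 0$ in the homogeneous case, so Gronwall closes the estimate to give \eqref{Es:linear_energy0} and \eqref{Es:linear_energy1}, the latter with $\|R\|_{L^2_TH^s}$ on the right via Cauchy--Schwarz in time.

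Having established the a priori energy estimate, existence is standard: I would regularize by convolution in $\alpha$ (or use a Galerkin scheme in frequency via a Littlewood--Paley cut-off $\chi(D_\alpha/N)$) to produce smooth approximate solutions for which the energy argument justifies uniform bounds in $L^\infty_T(H^{s+3/2}\times H^s)$; weak-$*$ compactness and lower semicontinuity then yield a limit solving \eqref{lin-ww-1} in the distributional sense with the claimed regularity.  Uniqueness follows immediately by applying the energy estimate to the difference of two solutions with zero data and zero inhomogeneity.  For \eqref{lin-ww-0-nh}, Duhamel's principle expresses $v$ as a time integral of solutions to the homogeneous problem and the $L^2$-in-$t$ estimate for $R$ is the natural one that matches the gain of $3/2$ derivatives between $R$ and $v$.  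The main technical obstacle I foresee is the careful bookkeeping of the commutator $[|D_\alpha|^{3/2}\partial_\alpha^s, V\partial_\alpha]$: its principal symbol must cancel at leading order when paired with $(\partial_t+V\partial_\alpha)\partial_\alpha^s u$ (this is where the symmetrized energy pays off), and only the subprincipal terms should contribute to the Gronwall factor; verifying this cancellation rigorously, while tracking exactly how many derivatives of $V$ are consumed (so that the final constants $C_1,C_2$ depend polynomially on the asserted norms of $V$), is the delicate part of the argument.
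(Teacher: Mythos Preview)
Your proposal is correct and follows essentially the same approach as the paper. The paper also introduces $v=\partial_t u+V\partial_\alpha u$, defines the energy $\mathcal{E}^k=\tfrac12\int(\partial_\alpha^{k+1}u)H\partial_\alpha(\partial_\alpha^{k+1}u)+(\partial_\alpha^k v)^2\,d\alpha$ (which is your $E_s$ since $H\partial_\alpha=|D_\alpha|$), and obtains the same cancellation of the dispersive cross-term together with the skew-adjoint transport identity; the only cosmetic differences are that the paper writes the system in first-order form and handles the commutator $\int V(\partial_\alpha^{k+2}u)H\partial_\alpha^{k+2}u\,d\alpha$ via the explicit identity $[H,V]$ (see \eqref{E:energy_comm}) rather than invoking Kato--Ponce, and it treats the range $-3/2\le s<0$ separately by conjugating with $\langle D_\alpha\rangle^{-s'}$ and absorbing the resulting commutator on a short time interval.
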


The existence and uniqueness is standard by combining 
the energy estimate \eqref{Es:linear_energy0} and \eqref{Es:linear_energy1}
with regularization and a Galerkin approximation. 
The detailed proofs of \eqref{Es:linear_energy0} and \eqref{Es:linear_energy1} 
are in Appendix \ref{A:energy}.

The main result is in this section is semiclassical Strichartz estimates 
for the linearized  water-wave problem under surface tension. 

\begin{theorem}\label{T:general-str}
Let $V \in H^l([0,T])H^k_\alpha(\reals)$ for some $T>0$ and for $l,k \gg 1$ sufficiently large
and let $j\geq j_0$, where $j_0 \gg 1$ is sufficiently large.  
Suppose that $U \in L^\infty([0,2^{-j/2}T]) H^{1/2p}(\reals)$ solves
\begin{equation}\label{E:U-eqn-1}
\begin{cases}
\partial_t^2 U- H\partial_\alpha^3 U +  2V(t,\alpha) \partial_\alpha \partial_t U
+ V^2(t,\alpha)\partial_\alpha^2 U = R(t,\alpha) \quad \text{for }  t \in [0, 2^{-j/2} T], \\
U(0,\alpha)= U_0(\alpha)\quad \text{and} \quad \partial_t U(0,\alpha) = U_1(\alpha),
\end{cases}
\end{equation}
where $(U_0, U_1) \in H^{1/2p}(\reals) \times H^{1/2p-3/2}(\reals)$ and 
$R \in L^2([0,T])H^{1/2p-3/2}(\reals)$. Suppose, further, that
$U$, $R$, $U_0$, and $U_1$ satisfy a dyadic frequency localization \eqref{dy-loc}
at frequency $2^j$.  Then, $U$ enjoys the estimate
\begin{equation}\label{E:short-ts-str}
\| U \|_{L^p([0, 2^{-j/2}T]) L^{q}_\alpha} \leq
C (\| U_0 \|_{H^{1/2p}_\alpha} + \| U_1 \|_{H^{ 1/2p- 3/2}_\alpha}
 + \| R \|_{L^1([0, 2^{-j/2}T]) {H^{1/2p-3/2}_\alpha}}),
\end{equation}
where $(p,q)$ satisfies 
\begin{equation*}
\frac{2}{p} + \frac{1}{q} = \frac{1}{2},
\end{equation*}
and 
$C$ depend on $p,q$ and the Sobolev norms of $V$ of orders bounded by $l$ and $k$.
\end{theorem}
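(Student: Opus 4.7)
The plan is to split $U$ into the leading-order parametrix built in Proposition \ref{parametrix-prop-2} plus a low-order correction, and to handle the inhomogeneity $R$ separately by Duhamel applied to the same parametrix machinery. Write $U = w^j + e$, where $w^j$ is the parametrix from Proposition \ref{parametrix-prop-2} with data $(U_0,U_1)$. Then $e$ solves \eqref{E:U-eqn-1} with zero data and forcing $R - E$, where $E$ is the parametrix error obeying \eqref{error-2}. The linear energy estimate \eqref{Es:linear_energy1} of Theorem \ref{eu-thm}, applied at regularity level $s=1/2p - 3/2$, reduces the Strichartz bound on $e$ to Sobolev embedding on the short interval $[0,2^{-j/2}T]$, since $E$ is controlled in $L^\infty_t L^2_\alpha$ by $\|U_0\|_{H^1} + \|U_1\|_{H^{-1/2}}$, which is $O(2^{-j(1-1/2p)})$ times the $H^{1/2p}\times H^{1/2p-3/2}$ norms after using dyadic localization. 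All the work is therefore in estimating $w^j$.

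The core step is a pointwise dispersion bound on the Schwartz kernel
\[
K^{j,\pm}(t,\alpha,\beta)
= \frac{1}{2\pi}\int e^{i(\phi^{j,\pm}(t,\alpha,\xi) - \beta\xi)} \chi^j(\xi)\,d\xi,
\]
where $\chi^j$ is a smooth cutoff to $|\xi|\sim 2^j$. Using \eqref{phi-est-1}--\eqref{phi-est-5}, $\partial_\xi\phi^{j,+} = \alpha + \tfrac{3}{2}\xi^{1/2}t + O(t\xi^{1/2}\cdot 2^{-j/2})$, so the critical equation $\partial_\xi\phi^{j,+}=\beta$ has a solution $\xi_c$ with $\xi_c^{1/2} \sim \tfrac{2}{3}(\beta-\alpha)/t$; since $\xi_c \sim 2^j$ and $t\leq 2^{-j/2}T$, this forces $|\beta-\alpha|\leq CT$. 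On this good set the second derivative $\partial_\xi^2\phi^{j,\pm}(\xi_c) \sim t\,\xi_c^{-1/2}\sim 2^{-j/2}t$ is nondegenerate, and stationary phase yields
\[
|K^{j,\pm}(t,\alpha,\beta)| \leq C\, t^{-1/2}\, 2^{j/4}, \qquad 0 < t \leq 2^{-j/2}T,
\]
with rapidly decaying tails outside the bounded $(\beta-\alpha)$ region obtained via repeated integration by parts (the propagation-of-singularities argument alluded to in Section \ref{SS:idea-str}).

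To convert this into Strichartz I would feed the parametrix operators $U^{j,\pm}(t): f\mapsto \int e^{-i\beta\xi}e^{i\phi^{j,\pm}(t,\alpha,\xi)}f(\beta)\,d\beta\,d\xi$ into the abstract semiclassical result. Lemma \ref{L2-L2-bdd} gives the uniform $L^2\to L^2$ bound, while the dispersion estimate above yields
\[
\|U^{j,\pm}(t')\,(U^{j,\pm})^*(t)\,g\|_{L^\infty_\alpha}
\leq C\, 2^{j/4}\,|t-t'|^{-1/2}\,\|g\|_{L^1_\alpha}.
\]
Theorem \ref{T:general-sc-str} with $\sigma = 1/2$, $\mu = 1/4$, and $h = 2^{-j}$ then delivers
\[
\|U^{j,\pm}(t)f\|_{L^p([0,2^{-j/2}T])L^q_\alpha}
\leq C\,2^{j/(2p)}\,\|f\|_{L^2_\alpha},
\]
on the scaling line $\tfrac{2}{p}+\tfrac{1}{q}=\tfrac12$. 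Using the $L^2$ bound on the densities $f^{j,\pm}$ from \eqref{Es:f^pm}, and interpreting the $2^{j/(2p)}$ factor via the dyadic localization as a loss of $1/(2p)$ derivatives, yields the homogeneous case of \eqref{E:short-ts-str} with the $H^{1/2p}\times H^{1/2p - 3/2}$ norms on the right.

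For the inhomogeneity I would use Duhamel for the second-order equation: the solution with zero data and forcing $R$ can be written as $\int_0^t U^{j,\pm}(t-s)\,|D_\alpha|^{-3/2}R(s)\,ds$ up to parametrix errors of the same type as $E$, and the standard Keel–Tao dual Strichartz bound $L^p_tL^q_\alpha \leftarrow L^1_tL^2_\alpha$ produces the $L^1_tH^{1/2p - 3/2}_\alpha$ term in \eqref{E:short-ts-str}, the $-3/2$ coming precisely from the $|D_\alpha|^{-3/2}$ factor. The main obstacle throughout is the first step: verifying the stationary-phase bound uniformly in the coefficient $V$ only on the semiclassical time scale $t\sim 2^{-j/2}$, handling the non-stationary tails without losing more than the $2^{j/4}$ factor (this is where the Sobolev control of $V$ and the symbol estimates of Lemma \ref{HJ-lemma} are used), and ensuring that Lemma \ref{freq-lemma} lets us treat $\pm$ branches independently without cross-interference when reassembling the full parametrix.
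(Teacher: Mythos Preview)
Your outline matches the paper's proof closely: reduce to the homogeneous problem by Duhamel, approximate the homogeneous solution by the parametrix $w^j$ of Proposition~\ref{parametrix-prop-2}, prove a dispersion estimate by stationary phase, feed it into Theorem~\ref{T:general-sc-str}, and control the correction $Z=U-w^j$ by the energy estimate plus Sobolev embedding on the short interval. Two points in your write-up need more care, however.

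First, the step ``the dispersion estimate above yields $\|U^{j,\pm}(t')(U^{j,\pm})^*(t)g\|_{L^\infty}\leq C\,2^{j/4}|t-t'|^{-1/2}\|g\|_{L^1}$'' is not automatic from the bound on $K^{j,\pm}(t,\alpha,\beta)$. The kernel of $U^{j,\pm}(t')(U^{j,\pm})^*(t)$ is $\int e^{i(\phi^{j,\pm}(t',\alpha,\xi)-\phi^{j,\pm}(t,\alpha',\xi))}\chi^j(\xi)^2\,d\xi$, whose phase depends on \emph{both} spatial endpoints through the perturbation $\vartheta^{j,\pm}$. The paper isolates this as a separate lemma (Lemma~\ref{disp-lemma-2}) and observes that the phase difference equals $(\alpha-\alpha')\xi(1+\O(2^{-j/2}))+(t'-t)|\xi|^{3/2}(1+\O(2^{-j/2}))$, so the same stationary-phase argument goes through with $t$ replaced by $t'-t$; you should state this explicitly rather than infer it from the single-time kernel.

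Second, your Duhamel formula $\int_0^t U^{j,\pm}(t-s)|D_\alpha|^{-3/2}R(s)\,ds$ presumes time-translation invariance, which the variable-coefficient operator $P$ does not have, and invoking a parametrix inside Duhamel would generate a fresh error term at every $s$. The paper's route is simpler: once the homogeneous Strichartz bound is in hand for data prescribed at any point of $[0,2^{-j/2}T]$, write the inhomogeneous solution as $\int_0^t S(t,s)R(s)\,ds$ with $S(t,s)$ the \emph{actual} homogeneous propagator (data $(0,R(s))$ at time $s$), and apply Minkowski plus the homogeneous estimate to get $\|R\|_{L^1_tH^{1/2p-3/2}_\alpha}$. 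Finally, a sign slip: $\|U_0\|_{H^1}$ is $2^{+j(1-1/2p)}$ times $\|U_0\|_{H^{1/2p}}$, not $2^{-j(1-1/2p)}$; the argument still closes because the $2^{-j/2p}$ from Hölder on $[0,2^{-j/2}T]$ and the $3/2$-derivative gain in the energy estimate more than compensate.
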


Moreover, gluing together these estimates on $2^{j/2}$ many time intervals of length $2^{-j/2}T$ 
we obtain Strichartz estimates on a fixed time scale with loss in derivative.

\begin{corollary}
\label{C:homog-str}
Let $V \in H^l([0,T])H^k_\alpha\reals)$ for some $T>0$ and for some $l,k \gg 1$ sufficiently large.

If $(U_0, U_1) \in H^{1/p}(\reals) \times H^{-3/2+1/p}(\reals)$, then 
the solution $U\in L^\infty([0,T])H^{1/p}(\mathbb{R})$ 
of the initial value problem \eqref{lin-ww-1} satisfies the inequality
\begin{equation}\label{E:main-est-1}
\| U \|_{L^p([0,T])L^q(\reals)} \leq C_1( \|
U_0 \|_{H^{1/p}(\reals)} + \| U_1 \|_{H^{-3/2 + 1/p}(\reals)} ).
\end{equation}

If $R\in L^2([0,T])H^{-3/2+1/p}(\reals)$, then
the solution $v\in L^\infty([0,T])H^{ 1/p}(\mathbb{R})$ 
of the initial value problem \eqref{lin-ww-0-nh} satisfies the inequality
\begin{equation}\label{E:main-est-2}
\| v\|_{L^p([0,T])L^q(\reals)} \leq C_2 \| R \|_{L^1([0,T])H^{-3/2 + 1/p}(\reals)} .
\end{equation}
Here, $(p,q)$ satisfies the admissibility condition in Theorem \ref{T:general-str} with $q < \infty$, and
$C_1, C_2$ depend on $p,q$ and the Sobolev norms of $V$ of orders bounded by $l$ and $k$.
\end{corollary}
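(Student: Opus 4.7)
The plan is to piece together the Strichartz bound on $[0,T]$ from the semiclassical bound of Theorem \ref{T:general-str} on many short subintervals, combined with a Littlewood--Paley decomposition. Throughout I focus on \eqref{E:main-est-1}; the inhomogeneous estimate \eqref{E:main-est-2} follows by the same argument applied to \eqref{lin-ww-0-nh}. I split $U$ into a low-frequency part and a dyadic high-frequency decomposition $U = U^{\text{low}} + \sum_{j \geq j_0} U^j$, where $U^j = \psi^j(D_\alpha) U$. The low-frequency piece $U^{\text{low}}$ is handled directly by Bernstein's inequality together with the energy estimate \eqref{Es:linear_energy0}, so all of the work lies in the dyadic blocks.

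First, I would derive the equation satisfied by $U^j$. Since $\psi^j(D_\alpha)$ commutes with $\partial_t$, $\partial_\alpha$ and $H$, the function $U^j$ solves
\begin{equation*}
\partial_t^2 U^j - H \partial_\alpha^3 U^j + 2V\,\partial_\alpha \partial_t U^j + V^2 \partial_\alpha^2 U^j = R^j,
\end{equation*}
where $R^j := -2[\psi^j(D_\alpha), V]\partial_\alpha \partial_t U - [\psi^j(D_\alpha), V^2] \partial_\alpha^2 U$ collects the commutator errors, and both $U^j$ and $R^j$ are essentially localized at frequency $2^j$ (modulo tails at neighboring dyadic scales). Standard rough-symbol commutator bounds, in the spirit of Lemma \ref{ex-L2-lemma}, produce one free derivative in each commutator, so that $R^j$ is of lower order than the left-hand side and its $L^1_T H^{1/p - 3/2}_\alpha$ norm is controlled via the energy estimate of Theorem \ref{eu-thm} by $\|U_0\|_{H^{1/p}_\alpha} + \|U_1\|_{H^{1/p - 3/2}_\alpha}$, with a constant polynomial in the Sobolev norms of $V$.

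Next, I partition $[0,T]$ into $N = \lceil 2^{j/2} \rceil$ subintervals $I_k$ of length at most $2^{-j/2}T$. Applying Theorem \ref{T:general-str} to $U^j$ on each $I_k$ with Cauchy data $(U^j(t_k), \partial_t U^j(t_k))$ gives
\begin{equation*}
\|U^j\|_{L^p(I_k) L^q_\alpha} \leq C\bigl(\|U^j(t_k)\|_{H^{1/2p}_\alpha} + \|\partial_t U^j(t_k)\|_{H^{1/2p - 3/2}_\alpha} + \|R^j\|_{L^1(I_k) H^{1/2p - 3/2}_\alpha}\bigr).
\end{equation*}
Controlling $(U^j(t_k), \partial_t U^j(t_k))$ uniformly in $k$ via a Gronwall-type version of \eqref{Es:linear_energy0} (which accepts an $L^1_t H^{1/2p-3/2}_\alpha$ forcing), raising to the $p$-th power, summing over $k$, and using $\ell^1 \hookrightarrow \ell^p$ for the $R^j$ contribution yields an overall factor $N^{1/p} \sim 2^{j/(2p)}$. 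Since $U^j$ is frequency-localized at $2^j$, this factor converts to $1/(2p)$ extra derivatives and adds to the $1/(2p)$ loss already present in Theorem \ref{T:general-str}, giving a total loss of $1/p$ derivatives:
\begin{equation*}
\|U^j\|_{L^p([0,T]) L^q_\alpha} \leq C\bigl(\|U_0^j\|_{H^{1/p}_\alpha} + \|U_1^j\|_{H^{1/p - 3/2}_\alpha}\bigr).
\end{equation*}

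Finally I would reassemble. Since $2/p + 1/q = 1/2$ with $q<\infty$ forces $p, q \geq 2$, the Littlewood--Paley square function estimate together with Minkowski's inequality in $\ell^2 \hookrightarrow \ell^p, \ell^q$ gives
\begin{equation*}
\|U\|_{L^p_T L^q_\alpha} \leq \|U^{\text{low}}\|_{L^p_T L^q_\alpha} + \Bigl(\sum_{j \geq j_0} \|U^j\|_{L^p_T L^q_\alpha}^2\Bigr)^{1/2} \leq C\bigl(\|U_0\|_{H^{1/p}} + \|U_1\|_{H^{1/p - 3/2}}\bigr),
\end{equation*}
which is \eqref{E:main-est-1}. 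The hardest step is the second one: one must justify that the commutator remainder $R^j$ really is lower order in a manner compatible with the frequency scale $2^j$ and the time scale $2^{-j/2}T$, so that it can be folded into the energy estimate and summed across the $2^{j/2}$ subintervals without losing more than the advertised $1/p$ derivatives. This rests on a careful, rough-symbol version of the Coifman--Meyer-type commutator bound adapted to the regularity hypotheses on $V \in H^l_T H^k_\alpha$.
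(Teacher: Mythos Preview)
Your proposal is correct and follows essentially the same approach as the paper: Littlewood--Paley decomposition, commutator equation for each dyadic piece $U^j$, partition of $[0,T]$ into $\sim 2^{j/2}$ subintervals of length $2^{-j/2}T$, application of Theorem~\ref{T:general-str} on each, summation producing the extra $2^{j/(2p)}$ factor, and reassembly via the square function. The only cosmetic difference is that the paper splits $U^j = U^j_h + U^j_i$ and treats the homogeneous part directly while invoking Duhamel for the inhomogeneous part, whereas you keep $U^j$ whole and feed $R^j$ into the inhomogeneous version of Theorem~\ref{T:general-str}; these are equivalent. One small point to tighten: when you write that $\|R^j\|_{L^1_T H^{1/p-3/2}_\alpha}$ is controlled by the full $\|U_0\|_{H^{1/p}} + \|U_1\|_{H^{1/p-3/2}}$, you should instead use the localized bound $\|R^j\|_{H^{1/p-3/2}_\alpha} \lesssim \|\tpsi^j U\|_{H^{1/p-1/2}_\alpha} + \|\tpsi^j \partial_t U\|_{H^{1/p-3/2}_\alpha}$ (this is exactly the paper's \eqref{E:Rj} shifted to the right Sobolev index), so that the $\ell^2_j$ sum closes by almost-orthogonality of the $\tpsi^j$ rather than diverging.
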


\subsection{Proof of Theorem \ref{T:general-str} and Corollary \ref{C:homog-str}}
We explain our strategy to proving Theorem \ref{T:general-str}. 
We first demonstrate that the homogeneous dyadic-frequency parametrix 
in Proposition  \ref{parametrix-prop-2} satisfies an $L^1_\alpha \to L^\infty_\alpha$ dispersion estimate
on a semiclassical time interval of length  $2^{-j/2}$.  
Then we use Theorem \ref{T:general-sc-str} to deduce the semiclassical Strichartz estimates for the parametrix.
We use the energy estimates to show that 
the parametrix is sufficient to estimate the actual solution in the Strichartz norms on semiclassical time scales.  To prove Corollary \ref{C:homog-str}, 
we sum up the estimates on semiclassical time scales to obtain an estimate on a fixed time scale 
with loss in derivative from the summation.

We begin by proving the short-time dispersion estimate.  Without loss in generality, we work on the time interval $t \in [0, 2^{-j/2}T]$.  The analogous statments are true for any interval of length $2^{-j/2}T$ in $[0,T]$.  
In order to estimate various quantities involving the parametrix \eqref{w0-ansatz} 
we have found it expedient to ``glue'' together on the time scale $0\leq t \leq 2^{-j/2}T$ 
the phases $\phi^{j,\pm}$, which are supported on $2^{j-2}\leq |\xi| \leq 2^{j+2}$,  
to obtain global phases with similar properties.  
As usual, we avoid excessive notation by considering the $\phi^{j,+}$ case only
and writing $\phi^j = \phi^{j,+}$ and similarly for other quantities with the $\pm$ parity.  
The analysis in the $\phi^{j,-}$ case is completely analogous.

For $\epsilon>0$ small, let us choose $\chi_1(t)$ such that $\chi_1(t)=1$ on $[0,T]$ 
and it is supported in $[0-\epsilon, T+\epsilon]$. 
Recall that $\chi_2( \xi )$ is such that $\chi_2(\xi)=1$ on $[1/2,2]$ and it is supported in $[1/4, 4]$. 
We need $\tchi_2$ satisfying $\tchi_2(\xi)\equiv 1$ on $\supp \chi_2(\xi)$ 
and $\supp \tchi_2 \subset [1/4 -\epsilon, 4 + \epsilon]$.  
Let $\chi_1^j(t) =\chi_1(2^{j/2}t)$, 
$\chi_2^j(\xi) = \chi_2(2^{-j}\xi)$, and $\tchi_2^j(\xi) = \tchi_2(2^{-j}\xi)$.  
We define a modified phase function
\begin{equation}\label{E:tphi2}
\tphi^j(t, \alpha, \xi) = \alpha \xi + t | \xi| ^{3/2} + 
\chi_1^j(t) \chi_2^j(|\xi|) \vartheta^j(t, \alpha, \xi) | \xi |^{3/2},
\end{equation}
where $\vartheta^j = \vartheta^{j,+}$ has been constructed in Proposition \ref{parametrix-prop-2}.   The modified phase function $\tphi^j$ is defined for all $t$ and all $|\xi | \geq M$ 
and we claim it has the properties 
\begin{equation}
\label{E:vartheta-t-1}
 \vartheta^j(t,\alpha,\xi)=t\mathcal{O}(|t|+|\xi|^{-1/2})  \in W^{l,\infty}_{2^{-j/2}T} \s^0_{k}.
\end{equation}
To see this, note that 
\begin{equation}
\label{E:vartheta-t-2}
\vartheta^j_t(t,\alpha,\xi)=\mathcal{O}(|t|+|\xi|^{-1/2}).
\end{equation}
Indeed, a simple substitution of $\phi$ in the eikonal equation yields that $\vartheta$ satisfies 
\begin{equation}
\label{E:theta2}
\vartheta_t(t,\alpha,\xi) = - |\xi|^{-1/2} V(t,\alpha) (1 + \vartheta_\alpha) + (1 + \vartheta_\alpha)^{3/2} -1,
\end{equation}
where $\vartheta_\alpha (t,\alpha,\xi)= \O(t)$.  
Since $\vartheta(0, \alpha, \xi) = 0$ by construction, $\vartheta(t,\alpha,\xi)= t\O(| t | + |\xi|^{-1/2})$ as claimed.

We redefine $w^{j,+}$ by replacing the phase function $\phi^j$ by $\tphi^j$.  
The new oscillatory integral agrees with the old one for $0 \leq t \leq 2^{-j/2}T$ 
and for the data $u_0^j$ and $u_1^j$ satisfying the dyadic-frequency localization \eqref{dy-loc}, 
but it has the virtue of being globally defined so that we can use the theory of Fourier integral operators.  

Let us consider the Fourier integral operator
\begin{equation}\label{int-10}
(Ff)(t,\alpha)=\frac{1}{2\pi} \iint e^{-i\beta \xi} e^{i \tphi^j(t,\alpha,\xi)}
\chi_1(t) \tchi_2^j(\xi) f(\beta)\, d\beta d \xi .
\end{equation}
It is standard from the results of Lemma \ref{L2-L2-bdd} and Lemma \ref{freq-lemma} that
\[
\| Ff(t) \|_{H^s_\alpha} \leq (1 + \O(t)) \| f \|_{H^s_\alpha}
\]
for $s \geq 0$ and each $0 \leq t \leq T$.  
As in Section \ref{SS:idea-str} for the Schr\"odinger equation, we write the oscillatory integral as
\[
F f(t, \alpha) = \int K(t, \alpha, \beta ) f(\beta) d\beta,\]
where
\begin{equation}\label{E:K}
K(t, \alpha, \beta) = \int e^{-i\beta \xi} e^{i \tphi^j(t,\alpha,\xi)} \chi_1(t) \tchi_2^j(\xi) d \xi.
\end{equation} 

\begin{lemma}[Microlocal dispersion estimates]\label{disp-lemma}
The kernel in \eqref{E:K} satisfies 
\[
\| K \|_{L^\infty_\beta} \leq C 2^{j/4}t^{-1/2} \qquad \text{ for } | t | \leq 2^{-j/2} T ,\]
so that
\[
\|F f(t) \|_{L^\infty_\alpha} \leq C 2^{j/4}t^{-1/2} \| f \|_{L^1_\alpha} \qquad \text{for $| t | \leq 2^{-j/2}T$.}
\]
\end{lemma}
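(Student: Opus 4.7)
The plan is to apply the standard one-dimensional method of stationary phase to the oscillatory integral \eqref{E:K}, viewed as an integral in $\xi$ with phase $\Psi(\xi) := -\beta\xi + \tilde\varphi^j(t,\alpha,\xi)$. I treat $\xi > 0$ only; the case $\xi < 0$ is analogous by the remark following Lemma \ref{freq-lemma}. From \eqref{E:tphi2} and the refinement $\vartheta^j = t\mathcal{O}(|t|+|\xi|^{-1/2})$ in \eqref{E:vartheta-t-1}, one computes on the dyadic support of $\tilde{\chi}_2^j$ that
\[
\partial_\xi\Psi = \alpha - \beta + \tfrac{3}{2}t\xi^{1/2} + r_1, \qquad \partial_\xi^2\Psi = \tfrac{3}{4}t\xi^{-1/2} + r_2,
\]
where the correction terms coming from differentiating $\chi_1^j\chi_2^j\vartheta^j|\xi|^{3/2}$ satisfy $|r_1|\leq C\,2^{-j/2}|\tfrac{3}{2}t\xi^{1/2}|$ and $|r_2|\leq C\,2^{-j/2}|\tfrac{3}{4}t\xi^{-1/2}|$. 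Thus both the first-derivative shape of $\partial_\xi\Psi$ and the nondegeneracy of $\partial_\xi^2\Psi$ survive the perturbation produced by the $\vartheta^j$ term.

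Next the argument splits into a stationary and a non-stationary regime. Setting $\partial_\xi\Psi = 0$ yields $\xi_c^{1/2} = \tfrac{2}{3}(\beta-\alpha)/t\,(1+\mathcal{O}(2^{-j/2}))$, so $\xi_c\in[2^{j-2},2^{j+2}]$ precisely when $|\beta-\alpha|$ is comparable to $t\cdot 2^{j/2}$. When $|\beta-\alpha|$ falls outside this window by a fixed factor, $\partial_\xi\Psi$ is bounded below on $\supp\tilde{\chi}_2^j$ by a constant multiple of $\max(|\beta-\alpha|,\, t\cdot 2^{j/2})$, and repeated integration by parts with the operator $L = (i\partial_\xi\Psi)^{-1}\partial_\xi$ — using the symbol estimates on $\vartheta^j$ from Lemma \ref{HJ-lemma} to control the derivatives of $1/\partial_\xi\Psi$ — delivers the non-stationary bound $|K|\leq C_N\,2^j(1 + t\,2^{j/2} + |\beta-\alpha|)^{-N}$ for any $N$, much better than the claimed bound for $|t|\leq 2^{-j/2}T$.

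In the stationary regime $|\beta-\alpha|\sim t\,2^{j/2}$ with $t>0$, the critical point is nondegenerate with $|\partial_\xi^2\Psi(\xi_c)|\sim t\,\xi_c^{-1/2}\sim t\,2^{-j/2}$. Inserting a smooth cutoff localizing to a neighborhood of $\xi_c$ and applying one-dimensional stationary phase (or van der Corput's lemma using the second-derivative bound) gives
\[
|K|\leq C\,|\partial_\xi^2\Psi(\xi_c)|^{-1/2} \sim (t\,2^{-j/2})^{-1/2} = 2^{j/4}t^{-1/2},
\]
while the complementary non-stationary piece of the $\xi$-integral is absorbed by the previous paragraph. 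For very small $t$, say $t\leq 2^{-3j/2}$, the right-hand side is larger than the trivial bound $|K|\leq C\,2^j$ coming from the length $\sim 2^j$ of $\supp\tilde{\chi}_2^j$, so the trivial bound suffices in that range and matches the stationary phase bound at $t\sim 2^{-3j/2}$. The second inequality then follows from $\|Ff(t)\|_{L^\infty_\alpha}\leq \|K(t,\cdot,\cdot)\|_{L^\infty_{\alpha,\beta}}\|f\|_{L^1_\alpha}$.

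The main obstacle is verifying that the nonlinear correction $\vartheta^j|\xi|^{3/2}$ really is a lower-order perturbation of the leading phase $(\alpha-\beta)\xi + t|\xi|^{3/2}$ in both $\partial_\xi$ and $\partial_\xi^2$, uniformly in $j$ and in $|t|\leq 2^{-j/2}T$, with perturbation constants small enough to permit stationary phase and integration by parts independently of $j$. The bare symbol estimates $|\partial_\xi^{m'}\vartheta^\pm|\leq C_{m'}\,2^{-j/2}\langle\xi\rangle^{-m'}$ of Lemma \ref{HJ-lemma} are not sharp enough, so one must exploit the refinement $\vartheta^j = t\mathcal{O}(|t|+|\xi|^{-1/2})$ of \eqref{E:vartheta-t-1}, which supplies an additional factor of $t$ traced back to the initial condition $\vartheta^j(0,\alpha,\xi)=0$ and the eikonal equation \eqref{E:theta2}. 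Once this refined bound — together with its $\xi$-derivatives, obtained by differentiating \eqref{E:theta2} and estimating inductively — is in hand, the remainder of the argument is routine one-dimensional stationary phase.
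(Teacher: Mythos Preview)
Your proposal is correct and follows essentially the same approach as the paper: stationary phase in $\xi$, using the refinement \eqref{E:vartheta-t-1} (the extra factor of $t$ in $\vartheta^j$) to show that the correction terms in $\partial_\xi\Psi$ and $\partial_\xi^2\Psi$ are lower order, and then reading off $|\partial_\xi^2\Psi(\xi_c)|\sim t\,2^{-j/2}$ to get the $2^{j/4}t^{-1/2}$ bound. Your explicit treatment of the non-stationary regime and the trivial bound for very small $t$ fills in details that the paper relegates to a footnote, but the argument is the same.
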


\begin{proof}
The proof uses the method of stationary phase together with the assumption that $t$ is in an interval of length $2^{-j/2}T$ to estimate the error terms.  We assume as usual that $\xi$ is large and positive;
the large negative case is treated similarly.

We next compute
\begin{align*}
\partial_\xi (\tphi^j(\xi;t,\alpha)-\beta \xi)  = \alpha - \beta + \frac{3}{2} t |\xi|^{1/2} 
& + \frac{3}{2} \chi_1^j(t) \chi_2^j(|\xi|)|\xi|^{1/2}  \vartheta^j(t, \alpha, \xi) \\ 
&+ \chi_1^j(t) 2^{-j}\chi_2'(2^{-j}|\xi|)  |\xi|^{3/2} \vartheta^j(t, \alpha, \xi) \\
& + \chi_1^j(t) \chi_2^j(|\xi|)  | \xi |^{3/2}  \partial_\xi \vartheta^j(t, \alpha, \xi) \\
=\alpha - \beta + \frac{3}{2} t |\xi|^{1/2} &+t|\xi|^{1/2}\O(|t|+|\xi|^{-1/2})
\end{align*}
on $|t| \leq 2^{-j/2} T$ and for $2^{j-1}\leq |\xi| \leq 2^{j+1}$. 
The second equality uses that $\vartheta^j \in \s_k^0$, the $t$
localization due to $\chi_1^j(t)$, and \eqref{E:vartheta-t-1}-\eqref{E:vartheta-t-2}.
A critical point of the phase $\tphi^j(\xi;t,\alpha)-\beta \xi$, therefore, is given by
\[
\xi_c ^{1/2}= \frac23 \frac{ \beta - \alpha }{t} (1 +\O(2^{-j/2})).
\]
We observe that the localization of $\tchi_2^j$ implies 
that the integrand is zero unless $\xi \sim 2^j$, and hence $\xi_c \sim 2^j$. 

Next, we compute for $| t | \leq 2^{-j/2} T$: 
\begin{align*}
\partial_\xi^2 (\tphi^j(\xi; t,\alpha) - \beta \xi) = \frac{3}{4} t |\xi|^{-1/2} 
&+\frac34 \chi_1^j(t)\chi_2^j(|\xi|)| \xi |^{-1/2}   \vartheta^j(t,\alpha, \xi) \\
&+ \chi_1^j(t) 2^{-2j}\chi_2''(2^{-j}|\xi|)  |\xi|^{3/2} \vartheta^j(t, \alpha, \xi) \\
&+ \chi_1^j(t) \chi_2^j(|\xi|)  | \xi |^{3/2} \partial^2_\xi \vartheta^j(t, \alpha, \xi)\\
&+3 \chi_1^j(t) 2^{-j}  \chi_2'(2^{-j}|\xi|) |\xi|^{1/2} \vartheta^j(t, \alpha, \xi) \\
& +\chi_1^j(t)  2^{-j}\chi_2'(2^{-j}|\xi|) | \xi |^{3/2} \partial_\xi \vartheta^j(t, \alpha, \xi) \\
=  \frac{3}{4} t |\xi|^{-1/2} &(1 + \O(|t|+|\xi|^{-1/2})),
\end{align*}
where we have again used the $t$ localization of $\chi_1^j(t)$ and
\eqref{E:vartheta-t-1}-\eqref{E:vartheta-t-2}.  
Evaluating this at the critical point, we obtain 
\begin{align*} 
\partial_\xi^2 (\tphi^j(\xi; t,\alpha) - \beta \xi) \big|_{\xi=\xi_c} =& 
\frac{3}{4} t \cdot  \frac 32 \frac{t}{\beta-\alpha} (1 + \O(2^{-j/2})) \\
= & \frac{9}{8} \frac{t^2}{\beta-\alpha}(1 + \O(2^{-j/2})) .
\end{align*}
Thus, the critical point is nondegenerate for $t>0$.

Applying the method of stationary phase\footnote{Stationary phase gives a bound of $2^{j/4}t^{-1/2}$ plus a reminder term. Since we localize both in time and space this reminder is even smaller than $t^{-1/2}$.} completes the proof of the lemma, once we observe that 
the restriction $| t | \leq 2^{-j/2}T$ combined with the restriction $\xi_{\text{c}} \sim 2^j$ implies that 
$ (\beta - \alpha)^{1/2} / t^{1/2} $ is bounded by $2^{j/4}$ for this range of $t$.
\end{proof}

In order to apply Theorem \ref{T:general-sc-str}, we need to control $F(t') F^*(t)$.

\begin{lemma}\label{disp-lemma-2}
The Fourier integral operator $F$ given in \eqref{int-10} satisfies the estimate
\[
\| F(t') F^*(t) f(\alpha) \|_{L^\infty_\alpha} \leq C 2^{j/4}| t' - t |^{-1/2} \|f \|_{L^1_\alpha}
\]
for $| t' | , | t | \leq 2^{-j/2} T$.
\end{lemma}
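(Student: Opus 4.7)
My strategy is to compute the Schwartz kernel of $F(t')F^*(t)$ as a single oscillatory integral in $\xi$ and then re-run the stationary phase argument of Lemma \ref{disp-lemma} with the ``time difference'' $t'-t$ playing the role of $t$. Since $F(t)$ has Schwartz kernel
\[
K_F(t,\alpha,\beta)=\int e^{-i\beta\xi}e^{i\tphi^j(t,\alpha,\xi)}\chi_1(t)\tchi_2^j(\xi)\,d\xi,
\]
the $\beta$-integration in the composition $F(t')F^*(t)$ collapses the two frequency variables via a delta function and yields
\[
K(t',t,\alpha,\alpha')=\chi_1(t')\chi_1(t)\int e^{i[\tphi^j(t',\alpha,\xi)-\tphi^j(t,\alpha',\xi)]}\bigl|\tchi_2^j(\xi)\bigr|^2\,d\xi,
\]
so the claimed $L^1_\alpha\to L^\infty_\alpha$ bound is equivalent to the pointwise estimate $\|K\|_{L^\infty_{\alpha,\alpha'}}\le C\,2^{j/4}|t'-t|^{-1/2}$. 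Using \eqref{E:tphi2} together with the derivative estimates $\vartheta^j_t=\O(|t|+|\xi|^{-1/2})$ from \eqref{E:theta2}--\eqref{E:vartheta-t-2} and $\vartheta^j_\alpha=\O(t|\xi|^{-1/2})$ extracted from \eqref{phi-est-1}, and a one-step Taylor expansion of $\vartheta^j(t',\alpha,\xi)-\vartheta^j(t,\alpha',\xi)$ in both its $t$- and $\alpha$-arguments, the phase
\[
\Phi(\xi):=\tphi^j(t',\alpha,\xi)-\tphi^j(t,\alpha',\xi)
\]
takes, on $|t'|,|t|\le 2^{-j/2}T$ and $|\xi|\sim 2^j$, the form
\[
\Phi(\xi)=(\alpha-\alpha')\xi\bigl(1+\O(2^{-j/2})\bigr)+(t'-t)|\xi|^{3/2}\bigl(1+\O(2^{-j/2})\bigr).
\]

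From here I would repeat the critical-point analysis from the proof of Lemma \ref{disp-lemma} almost verbatim, with $t$ replaced by $t'-t$. Solving $\partial_\xi\Phi=0$ yields
\[
\xi_c^{1/2}=\tfrac{2}{3}\,\frac{\alpha'-\alpha}{t'-t}\bigl(1+\O(2^{-j/2})\bigr),
\]
the cutoff $\tchi_2^j$ forces $\xi_c\sim 2^j$, and
\[
\partial_\xi^2\Phi(\xi_c)=\tfrac{9}{8}\,\frac{(t'-t)^2}{\alpha'-\alpha}\bigl(1+\O(2^{-j/2})\bigr)
\]
is nondegenerate for $t'\ne t$. The support constraints $\xi_c\sim 2^j$ and $|t'-t|\le 2\cdot 2^{-j/2}T$ combine to force $|\alpha'-\alpha|^{1/2}/|t'-t|^{1/2}\lesssim 2^{j/4}$, and applying the stationary phase formula produces $|K|\le C\,2^{j/4}|t'-t|^{-1/2}$, as required.

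The only genuine technical point that distinguishes this argument from Lemma \ref{disp-lemma} is the subordination of the correction coming from $\vartheta^j(t',\alpha,\cdot)-\vartheta^j(t,\alpha',\cdot)$: unlike in the single-time kernel, the two evaluations of $\vartheta^j$ differ simultaneously in $t$ and $\alpha$, so their difference does not automatically carry a factor of $t'-t$ alone. This is where the estimate $\vartheta^j_\alpha=\O(t|\xi|^{-1/2})$ is decisive: the $\alpha$-contribution $(\alpha-\alpha')\vartheta^j_\alpha|\xi|^{3/2}=(\alpha-\alpha')\xi\,\O(t)$ comes with a compensating factor $|t|\le 2^{-j/2}T$, while the $t$-contribution $(t'-t)\vartheta^j_t|\xi|^{3/2}$ inherits the $\O(|t|+|\xi|^{-1/2})=\O(2^{-j/2})$ smallness from $\vartheta^j_t$. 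Both are therefore genuine perturbations of the two leading terms in $\Phi$, which is precisely what is needed to keep the critical point inside the dyadic band and to preserve the nondegeneracy estimate behind the stationary phase bound.
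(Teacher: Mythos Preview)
Your proposal is correct and follows essentially the same route as the paper: write the kernel of $F(t')F^*(t)$, observe that the phase difference takes the form $(\alpha-\alpha')\xi(1+\O(2^{-j/2}))+(t'-t)|\xi|^{3/2}(1+\O(2^{-j/2}))$, and then rerun the stationary phase argument of Lemma~\ref{disp-lemma} with $t'-t$ in place of $t$. Your treatment of the $\vartheta^j(t',\alpha,\xi)-\vartheta^j(t,\alpha',\xi)$ term via separate Taylor expansions in $t$ and $\alpha$, using $\vartheta^j_t=\O(2^{-j/2})$ and $\vartheta^j_\alpha=\O(t|\xi|^{-1/2})$, is in fact more explicit than the paper's, which simply asserts that the part not carrying a factor of $t'-t$ is of lower order and writes down the resulting phase form directly.
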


\begin{proof}
We write
\[
F(t') F^*(t) f( \alpha) = \iint e^{i (\tphi^j(t', \alpha, \xi) - \tphi^j(t,\alpha', \xi))} 
\chi_1(t)\chi_1(t') (\tchi_2^j( \xi ))^2 f( \alpha') d \alpha' d \xi
\]
and its corresponding kernel
\[
K_1( \alpha, \alpha') = 
\int e^{i (\tphi^j(t', \alpha, \xi) - \tphi^j(t,\alpha', \xi))} 
\chi_1(t)\chi_1(t') (\tchi_2^j( \xi ))^2  d \xi.
\]
We cannot apply the same argument as in Lemma \ref{disp-lemma} to
estimate $K_1$,
since the difference of the two phases depends on $\alpha$ and $\alpha'$. 
However, the only part from which we cannot directly factor the $t'-t$ is of lower order.  
Precisely,
\begin{align*}
\tphi^j(t', \alpha, \xi) - \tphi^j(t, \alpha', \xi ) =  (\alpha - \alpha')\xi + &(t'-t) | \xi| ^{3/2} \\
+ | \xi |^{3/2} &\chi_2^j(|\xi|) ( \chi_1^j(t') \vartheta^j(t', \alpha, \xi) 
- \chi_1^j(t) \vartheta^j(t,  \alpha', \xi)) \\
=  (\alpha - \alpha')\xi(1 &+ \O(2^{-j/2})) + (t'-t) | \xi |^{3/2} (1 + \O(2^{-j/2})).
\end{align*}
We then apply the same arguments as in Lemma \ref{disp-lemma} to complete the proof.
\end{proof}

The parametrix constructed in Proposition \ref{parametrix-prop-2} is not an exact solution for
\eqref{lin-ww-1}, and it only exists for a short time scale depending on the dyadic frequency band.
We use energy estimates for the linear equation \eqref{lin-ww-1} 
to show that the parametrix is sufficient to estimate the actual solution.


\begin{proof}[Proof of Theorem \ref{T:general-str}]
We break $U$ into a piece solving the homogeneous equation
with the frequency-localized initial data 
and a piece solving the inhomogeneous equation with the null data.  
Once we prove the result for the homogeneous equation, 
then the result for the inhomogeneous equation follows immediately from Duhamel's principle. 
Hence, we may assume that $U$ solves the initial value problem for the homogeneous equation
\begin{equation*}
\begin{cases}
P U = 0 \qquad \text{for} \quad t \in [0,2^{-j/2} T], \\
U(0,\alpha)= U_0(\alpha), \qquad \partial_t U(0,\alpha) = U_1(\alpha)
\end{cases}
\end{equation*}
recalling the notation \eqref{D:P}.  We approximate $U$ on the interval $[0, 2^{-j/2}T]$ 
by the oscillatory integral parametrix given in Proposition \ref{parametrix-prop-2}. 
The parametrix $W$ solves
\begin{equation*}
\begin{cases}
P W = E \qquad \text{for} \quad t \in [0,2^{-j/2}T] , \\
W(0,\alpha) = U_0(\alpha),\qquad  \partial_t W(0,\alpha) = U_1(\alpha),
\end{cases}
\end{equation*}
where $E$ satisfies the error estimates \eqref{error-2}.  
We write
\[
\|U\|_{L^p([0, 2^{-j/2}T])L^q_\alpha}\leq
\|W\|_{L^p([0,2^{-j/2}T])L^q_\alpha}+\|U-W\|_{L^p([0, 2^{-j/2}T]) L^q_\alpha}.
\]
Upon applying Theorem \ref{T:general-sc-str}, it is readily seen that 
the first term on the right side satisfies \eqref{E:short-ts-str}.

The function $Z = U - W$ solves
\[
\begin{cases} 
P Z = -E \qquad \text{for}\quad t \in [0,2^{-j/2} T], \\
Z(0,\alpha) = 0, \qquad \partial_t Z(0,\alpha)=0,
\end{cases}
\]
and hence satisfies the energy estimate \eqref{Es:linear_energy1}
\begin{equation}
\label{E:energy-Z-2'}
\| \partial_t Z \|_{L^\infty([0,2^{-j/2}T])H^{s }_\alpha} +
\|Z\|_{L^\infty([0,2^{-j/2}T])H^{s+3/2}_\alpha} \leq C \|
E\|_{L^2([0,2^{-j/2}T])H^{s}_\alpha} 
\end{equation} 
for $-3/2 \leq s \leq k-3/2$.  

By  the Sobolev embedding,  H\"older inequality  in time and  the energy estimate with $s = 1/2 - 1/q$ we then have
\begin{align*}
\| Z \|_{L^p([0,2^{-j/2}T]) L^q_\alpha} 
\leq& C \| Z\|_{L^p([0,2^{-j/2}T]) H^{1/2 - 1/q}_\alpha} \\
\leq& C(T 2^{-j/2})^{1/p} \| Z\|_{L^\infty([0,2^{-j/2}T]) 
H^{1/2 - 1/q}_\alpha} \\
\leq&  C(T 2^{-j/2})^{1/p} \| E \|_{L^2([0,2^{-j/2}T])  H^{-1-1/q}_\alpha} \\
\leq& C T^{1/p} \| E \|_{L^2([0,2^{-j/2}T])  H^{-1 -1/q -1/2p}_\alpha} \\  
\leq &C T^{2/p} \| E \|_{L^\infty([0,2^{-j/2}T])  H^{-1 - 1/q - 1/p}_\alpha}.
\end{align*}
Then using the  estimates on $E$ given in
\eqref{error-2}, the estimate \eqref{E:short-ts-str} for $Z$ follows.
\end{proof}



\begin{proof}[Proof of Corollary \ref{C:homog-str}]
As above, once we prove \eqref{E:main-est-1}, 
then an application of Duhamel's principle and the Minkowski
inequality yields \eqref{E:main-est-2}.
In what follows, thus, we consider only the homogeneous equation.

The first step is to chop the actual solution of \eqref{lin-ww-1} into pieces 
each of which is localized in a dyadic frequency band. 
Let us choose a partition of unity in $\xi$ 
\begin{equation*}
1 = (1-\psi^0)(\xi) + \sum_{j \geq j_0} \psi^j(\xi)
\end{equation*}
as in Section \ref{S:parametrix}.
It is standard from the Littlewood-Paley theory that if $f \in L^q(\reals)$, $q < \infty$, 
then 
\[
\| f \|_{L^q_\alpha} \leq C
\| (1-\psi^0)(D_\alpha) f \|_{L^q_\alpha} +
\left( \sum_{j \geq j_0} \| \psi^j(D_\alpha) f
  \|_{L^q_\alpha}^2 \right)^{1/2},
\]
and hence it suffices to prove \eqref{E:main-est} on dyadic frequency bands.  

Let $U$ be the actual solution of \eqref{lin-ww-1},  and let $U^j = \psi^j(D_\alpha) U$.  
It is readily seen that $U^j$ solves
 \begin{equation} \label{lin-ww-1-j}
 \begin{cases} \partial_t^2U^j - H \partial_\alpha^3U^j +
     2V(t,\alpha) \partial_\alpha\partial_t U^j
      +  V^2(t,\alpha) \partial_\alpha^2 U^j  = R^j(U), \\ 
 U^j(0,\alpha) = U_0^j(\alpha) \quad \text{and} \quad  \partial_t U^j(0,\alpha) = U_1^j(\alpha),
 \end{cases}
 \end{equation}
 where $U_0^j= \psi^j(D_\alpha)U_0$, $U_1^j=\psi^j(D_\alpha) U_1$ and
 \begin{equation}\label{D:Rj}
 \begin{split}
 R^j(U) & = [2V (t,\alpha) \partial_\alpha \partial_t +
 V^2 (t,\alpha)\partial_\alpha^2 \, ,\, \psi^j(D_\alpha) ] U \\
 & = \tpsi^j(D_\alpha) 2^{-j} (\mathcal{A}_{2V}(t,\alpha, D_\alpha) D_t D_\alpha
 + \mathcal{A}_{V^2}(t,\alpha,D_\alpha) D_\alpha^2) U.
 \end{split}
 \end{equation}
Here, $[\cdot , \cdot]$ denotes the commutator, $\mathcal{A}_{2V}$ and $\mathcal{A}_{V^2}$ are 
zeroth-order pseudodifferential operators, and $\tpsi^j$ is a smooth function with support 
contained in a neighbourhood of the support of $\psi^j$.  
It is immediate to see that $D_\alpha$ is comparable to $2^j$ on the support of $\tpsi^j$. 
Consequently, we have the estimate
 \begin{equation}\label{E:Rj}
 \| R^j \|_{L^2_\alpha} \leq C(\| \tpsi^j U \|_{H^1_\alpha} + \| \tpsi^j \partial_t U \|_{L^2_\alpha} ),
 \end{equation}
 where $C>0$ is a constant independent of $j$ depending only on a
 finite number of derivatives of $V$.

Let $U^j = U^j_h + U^j_{i}$ be the solutions to
the homogeneous and inhomogeneous problems, respectively, corresponding to \eqref{lin-ww-1-j}.
That is, $U^j_h$ solves
\[
\begin{cases}
P U^j_h = 0 \qquad \text{for} \quad t \in [0,T], \\
U^j_h(0,\alpha) = U^j_0(\alpha), \qquad \partial_t U^j_h (0,\alpha) = U^j_1(\alpha),
\end{cases}
\]
while $U^j_{i}$ solves
\[
\begin{cases}
P U^j_{i} = R^j(U) \qquad \text{for} \quad t \in [0,T], \\
U^j_{i} (0,\alpha)= 0, \qquad \partial_t U^j_{i} (0,\alpha) = 0.
\end{cases}
\]
We will prove the Strichartz estimates for $U^j_h$, 
which, via Duhamel's principle, will imply the Strichartz estimates for $U^j_{i}$.  
Then summing in $j$ will imply the Strichartz estimates for $U$ solving \eqref{lin-ww-1}.  

We divide the interval $[0,T]$ into $2^{j/2}$ small intervals of the size $2^{-j/2}T$.  
Let $T_{m,j} = 2^{-j/2} (m-1) T$, where $1\leq m\leq 2^{j/2}$
and let $I^{m,j} = [T_{m,j},T_{m+1,j}]$ so that
\[
[0,T] = \bigcup_{1 \leq m \leq 2^{j/2}} I^{m,j}.
\]
Then, we apply the results of Theorem \ref{T:general-str} on each
short time interval $I^{m,j}$, $1 \leq m \leq 2^{j/2}$, to obtain
\begin{align*}
\| U^j_h \|_{L^p([0,T]) L^q(\reals)}^p & = \sum_{m = 1}^{2^{j/2}} \| U^j_h
\|_{L^p(I^{m,j}) L^q(\reals)}^p \\
& \leq C \sum_{m=1}^{2^{j/2}} \left(\| U^j_h (T_{m,j}) \|_{H^{1/2p}(\reals)} +
\|\partial_t U^j_h (T_{m,j}) \|_{H^{1/2p-3/2}(\reals)} \right)^p \\
& \leq C 2^{j/2}  \left(\| U^j_0  \|_{H^{1/2p}(\reals)} +
\|\partial_t U^j_1 \|_{H^{1/2p-3/2}(\reals)} \right)^p.
\end{align*}
The last inequality uses the energy estimate.  

On the other hand, $2^{j/2p} \sim D_\alpha^{1/2p}$ on the frequency support of $U^j_0$ and $U^j_1$,  and a localized version for $j>j_0$ of   \eqref{E:main-est-1} follows. To finish the proof we first notice that for small frequencies, $j\leq j_0$, the estimate follows from Sobolev embedding and the energy estimate \eqref{Es:linear_energy0}.  By Duhamel's principle and Minkowski's integral inequality then   
also \eqref{E:main-est-2} follows. This completes the proof.
\end{proof}

\section{Local well-posedness via energy estimates}\label{S:LWP}

This section concerns the local well-posedness 
of the initial value problem associated to \eqref{E:u}.
The result is of independent interest.

We recall that $R(u, \partial_t u)$ \eqref{E:u}, defined in \eqref{D:R}, involves $\theta$,
which is determined by \eqref{E:transp}. To be precise, hence, the local well-posedness 
is to be established for \eqref{E:u} coupled with \eqref{E:transp}.
Upon the observation that \eqref{E:u} and \eqref{E:transp} are of different type,
we proceed by a ``bootstrapping" argument.
First, given $\theta$ in an appropriate function space, 
the local well-posedness for \eqref{E:u} is established via the energy method,
with $R(u,\partial_t u)$ evaluated with the given $\theta$.
With $u$ so obtained , next \eqref{E:transp} is solved via the standard method of characteristics 
with the variable-coefficient $u$ and $r_1$ evaluated by the solution $u$ in the first step. 
We focus on the development of an energy estimate for \eqref{E:u} and its local well-posedness.

We begin by writing \eqref{E:u} as the first-order in time system 
\begin{equation}\label{E:uv}
\begin{cases}
\partial_tu=v-u\partial_\alpha u, \\
\partial_tv=H\partial_\alpha^3 u-u\partial_\alpha v +\tilde{R}(u, v).
\end{cases}
\end{equation}
In other words, $v=\partial_t u+u\partial_\alpha u$ is the {\em material derivative} of $u$. 
Here, \[
\tilde{R}(u,v)=R(u, v-u\partial_\alpha u)+v \partial_\alpha u +u(\partial_\alpha u)^2\]
satisfies an estimate similar to \eqref{Es:R}:
\begin{align*}
\|\tilde{R}(u,v)\|_{H^s} \leq C(\|u\|_{H^{s+1}}, \| & v\|_{H^s}) \qquad \text{for $s\geq 1$.}
\end{align*}

Let us define the $k$-th energy associated to \eqref{E:uv} as 
\begin{equation}\label{E:energy_s}
\mathcal{E}^k(t)=\frac12 \int^\infty_{-\infty} 
((\partial_\alpha^{k+1}u)H\partial_\alpha (\partial_\alpha^{k+1}u) 
+(\partial_\alpha^k v)^2) \ d\alpha
\end{equation}
and the energy function for \eqref{E:uv} of order $s$ as
\begin{equation}\label{E:energy}
\mathfrak{E}^s(t)=\|u\|_{L^2_\alpha}^2(t)+\|v\|_{L^2_\alpha}^2(t)+\sum_{k=1}^s \mathcal{E}^k(t).
\end{equation}
Note that the operator $H\partial_\alpha$ is a positive operator
with the symbol of its Fourier transform $|\xi|$ and that 
\[
\|f\|_{H^{1/2}_\alpha}^2 = \int^\infty_{-\infty} (f^2 +f H\partial_\alpha f ) \, d\alpha.
\]
In the energy estimates below, we make use of the fact  that 
\begin{equation}\label{E:energy_comm}
\int^\infty_{-\infty} h\partial_\alpha fH\partial_\alpha f \ d\alpha 
=-\frac12 \int^\infty_{-\infty} ([H, h]\partial_\alpha
f)\partial_\alpha f \ d\alpha \leq C\|h\|_{H^{5/2 +}}\|f\|_{L^2}^2.
\end{equation}
Indeed, by integration by parts, 
\begin{align*}
\int^\infty_{-\infty} h\partial_\alpha fH\partial_\alpha f \ d\alpha =& 
-\int^\infty_{-\infty} H(h\partial_\alpha f)\partial_\alpha f \ d\alpha \\
=&-\int^\infty_{-\infty} h(H\partial_\alpha f )\partial_\alpha f \ d\alpha 
-\int^\infty_{-\infty} ([H, h]\partial_\alpha f)\partial_\alpha f \ d\alpha.
\end{align*}
Then, \eqref{E:energy_comm} follows by \eqref{Es:H} and upon the observation that 
$[H,h]\partial_\alpha f=\partial_\alpha([H,h]f) - [H, \partial_\alpha h] f$.

It is readily seen that the energy function $\mathfrak{E}^s(t)$ is equivalent to 
$\|u(t)\|^2_{H^{s+3/2}} + \| \partial_t u(t)+u(t)\partial_\alpha u(t)\|^2_{H^s}$.
Furthermore, $\mathfrak{E}^s(t)$ is equivalent to 
$\|u(t)\|^2_{H^{s+3/2}}+\|\partial_t u(t)\|^2_{H^s}$ if $s >1/2$. 

We now state and prove the nonlinear energy estimate for \eqref{E:uv}.

\begin{proposition}[The nonlinear energy estimates]\label{P:energy_estimate}
If $(u,v) \in H^{s+3/2}(\reals) \times H^s(\reals)$ for $s>1/2$ 
solves \eqref{E:uv} on the interval $t \in [0,T]$ for some $T>0$ 
and if $\|u\|_{H^{5/2 +}_\alpha} <+\infty$ for $0<t<T$
then 
\begin{equation}\label{Es:energy_nonlinear0}
\mathfrak{E}^s(t)<C_1 \qquad \text{for $0<t<T$}
\end{equation}
and subsequently
\begin{equation*}\label{Es:energy_nonlinear}
\|u(t)\|_{H^{s+3/2}}+\|\partial_t u(t)\|_{H^s} < C_2 \qquad \text{for $0<t<T$},
\end{equation*}
where the constants $C_1, C_2>0$ depends on $\|u(0)\|_{H^{s+3/2}}+\|\partial_t u(0)\|_{H^s}$.
\end{proposition}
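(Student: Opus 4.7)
The strategy is to derive a Gronwall-type differential inequality $\frac{d}{dt}\mathfrak{E}^s(t)\le C(\|u\|_{H^{5/2+}})(1+\mathfrak{E}^s(t))$ and integrate it; the pointwise bound on $\|u\|_{H^{s+3/2}}+\|\partial_t u\|_{H^s}$ then follows from the equivalence of $\mathfrak{E}^s$ with $\|u\|_{H^{s+3/2}}^2+\|\partial_t u+u\partial_\alpha u\|_{H^s}^2$ noted right before the statement, combined with the product rule and Sobolev embedding (since $s>1/2$). I would compute $\tfrac{d}{dt}\mathcal{E}^k$ for each $1\le k\le s$ by applying $\partial_\alpha^{k+1}$ to the first equation of \eqref{E:uv} and $\partial_\alpha^k$ to the second, then pairing them against $H\partial_\alpha\partial_\alpha^{k+1}u$ and $\partial_\alpha^k v$, respectively.

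The decisive algebraic fact is that the two top-order dispersive contributions
\[
\int (\partial_\alpha^{k+1}v)\, H\partial_\alpha^{k+2}u\, d\alpha \quad\text{and}\quad \int (\partial_\alpha^k v)\, H\partial_\alpha^{k+3}u\, d\alpha
\]
cancel after one integration by parts, since $H$ commutes with $\partial_\alpha$ and is skew-adjoint on $L^2$. The quasilinear pieces split into two principal contributions. From $\partial_\alpha^{k+1}(u\partial_\alpha u)$ the Leibniz rule isolates $u\partial_\alpha^{k+2}u$ paired against $H\partial_\alpha^{k+2}u$, which is controlled via the commutator identity \eqref{E:energy_comm} applied with $h=u$ and $f=\partial_\alpha^{k+1}u$ by $C\|u\|_{H^{5/2+}}\mathcal{E}^k$. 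From $\partial_\alpha^k(u\partial_\alpha v)$ the top-order piece $u\partial_\alpha^{k+1}v$ paired against $\partial_\alpha^k v$ integrates by parts to $\tfrac12\int(\partial_\alpha u)(\partial_\alpha^k v)^2\, d\alpha\le C\|u\|_{H^{5/2+}}\mathcal{E}^k$. All remaining Leibniz commutators $[\partial_\alpha^{k+1},u]\partial_\alpha u$ and $[\partial_\alpha^k,u]\partial_\alpha v$ are absorbed by Kato--Ponce/Moser estimates into $C(\|u\|_{H^{k+1}},\|v\|_{H^k})\mathfrak{E}^s$, and the $\tilde R$ contribution is handled by Cauchy--Schwarz together with the estimate $\|\tilde R(u,v)\|_{H^s}\le C(\|u\|_{H^{s+1}},\|v\|_{H^s})$ stated immediately after \eqref{E:uv}.

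Summing over $1\le k\le s$, adding the trivial $L^2$ estimates for $u$ and $v$ (in which $H\partial_\alpha^3$ drops out by skew-adjointness), and invoking Gronwall yields \eqref{Es:energy_nonlinear0}. The main obstacle is to verify that every integration-by-parts and commutator remainder can be absorbed using only the hypothesis $\|u\|_{H^{5/2+}}<\infty$ and $\mathfrak{E}^s$ itself, rather than the full top Sobolev norm of $u$; this is exactly what the Hilbert commutator estimate \eqref{E:energy_comm} is designed to supply, and it is the reason the energy must be built from $H\partial_\alpha$ rather than $\partial_\alpha^2$. A secondary technical point is to book-keep the Leibniz expansions so that the genuinely top-order terms appear in a symmetrizable form, while the sub-leading terms carry at most one derivative onto $u$ and thereby leave $s-1/2$ derivatives to spare in the product estimates.
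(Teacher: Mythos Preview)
Your proposal is correct and follows essentially the same route as the paper: differentiate $\mathcal{E}^k$, cancel the dispersive cross-terms $\int(\partial_\alpha^{k+1}v)H\partial_\alpha^{k+2}u$ against $\int(\partial_\alpha^k v)H\partial_\alpha^{k+3}u$, control the quasilinear top pieces by \eqref{E:energy_comm} and integration by parts, and close by Gronwall. One small correction: the differential inequality you actually obtain is polynomial in $\mathfrak{E}^s$, namely $\tfrac{d}{dt}\mathfrak{E}^s\le C\,\mathfrak{E}^s(1+\mathfrak{E}^s)^p$, not linear as your opening sentence suggests, because the $\tilde R$ bound and the Leibniz remainders depend on $\|u\|_{H^{s+1}}$ and $\|v\|_{H^s}$ rather than only on $\|u\|_{H^{5/2+}}$; this does not affect the conclusion on the given finite interval.
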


\begin{remark}[Remark on the energy expression]\rm\label{R:energy}
One may try
\begin{equation}\label{E:bad_energy}
 \int^\infty_{-\infty} ((\partial_\alpha^k \partial_t u)^2 
+ \partial_\alpha^{k+1}u H\partial_\alpha^{k+2}u ) \ d\alpha
\end{equation}
as the $k$-th energy function for \eqref{E:u}. 
Due to the multi-derivative nonlinear term $u^2 \partial_\alpha^2 u$, however, 
the application to \eqref{E:bad_energy} of the standard energy method is unwieldy. 
Indeed, one takes the $t$-derivative of the energy function 
and substitutes $\partial_t^2 u$ by \eqref{E:u}, but to arrive at an expression containing 
\[
\int^\infty_{-\infty} (\partial_\alpha^k \partial_t u)\partial_\alpha^k(u^2 \partial_\alpha^2 u) d\alpha,
\]
which cannot be controlled by the energy function \eqref{E:bad_energy}.

The idea of the proof is to write \eqref{E:u} as a system \eqref{E:uv} to resolve 
the multi-derivative term $u^2 \partial_\alpha^2 u$ into 
two single-derivative terms $u\partial_\alpha u$ and $u\partial_\alpha v$,
which work favorably in the application of the energy method 
by canceling higher Sobolev norms when integrated by parts.

\end{remark}

\begin{proof}
We begin by investigating the time derivative of $\mathcal{E}^k$ by calculating
\begin{equation}\label{E:E_k'}
\begin{split}
\frac{d}{dt}\mathcal{E}^k(t)=&\int_{-\infty}^\infty ((\partial_\alpha^{k+1} \partial_t u)
H\partial_\alpha (\partial_\alpha^{k+1} u) +
(\partial_\alpha^k \partial_t v) (\partial_\alpha^k v)) \ d \alpha \\ :=&\mathcal{E}^k_1+\mathcal{E}^k_2.
\end{split}
\end{equation}

Let us first compute $\mathcal{E}^k_1$. By using the first equation in \eqref{E:uv}
and the integration by parts we may write 
\begin{align}
\mathcal{E}^k_1=& \int_{-\infty}^\infty \partial_\alpha^{k+1}(-u\partial_\alpha u +v)
H\partial_\alpha (\partial_\alpha^{k+1}u) \ d\alpha \notag\\
\begin{split}\label{E:E_k1}
=&-\int_{-\infty}^\infty u (\partial_\alpha^{k+2}u)(H\partial_\alpha^{k+2}u) \ d\alpha\\
&+\int_{-\infty}^\infty (\partial_\alpha^{k+1}v) H\partial_\alpha \partial_\alpha^{k+1}u \ d\alpha
+ \text{(lower order terms)},
\end{split}
\end{align}
where (lower order terms) is a collection of terms which can be bounded in terms of energy
in a routine way.
The second inequality uses \eqref{E:energy_comm}. Note that 
\[ \int_{-\infty}^\infty u (\partial_\alpha^{k+2}u)(H\partial_\alpha^{k+2}u) \ d\alpha
\leq \|u\|_{H^{5/2+}} \|u\|_{H^{k+1}}^2.\]

Similarly, we compute 
\begin{align}
\mathcal{E}^k_2=& \int _{-\infty}^\infty 
\partial_\alpha^k (-u\partial_\alpha v+H\partial_\alpha^3 u +\tilde{R}(u,v))
(\partial_\alpha^k v)\ d\alpha \notag \\
\begin{split}\label{E:E_k2}
=& -\int^\infty_{-\infty} (\partial_\alpha^k (u\partial_\alpha v)-u\partial_\alpha^{k+1}v)
(\partial_\alpha^kv) \ d\alpha
-\int^\infty_{-\infty} (\partial_\alpha^{k+1}v)H\partial_\alpha^{k+2} u \ d\alpha\\
&+ \int^\infty_{-\infty} (\partial_\alpha^k v)\partial_\alpha^k \tilde{R}(u,v)\ d\alpha
+ \text{(lower order terms)}.
\end{split}
\end{align}
Again, (lower order terms) is made up of terms which can be bounded 
in terms of the energy in a routine way.
Note that 
\[ \| \partial_\alpha^k (u\partial_\alpha v)-u\partial_\alpha^{k+1}v\|_{L^2}
\leq C\|u\|_{H^k}\|\partial_\alpha v\|_{H^{k-1}}.\]

The third term on the right side of \eqref{E:E_k1}
and the third term on the right side of \eqref{E:E_k2}
cancel when added together. Other terms are bounded in terms of the energy,
provided that $\|u\|_{H^{5/2 +}}<+\infty$. 

We have proved 
$$\frac{d\mathfrak{E}^s}{dt} \leq C\mathfrak{E}^s(1+\mathfrak{E}^s)^p,$$
for some positive constant $C$ and for some $p>1$.
The proof then is complete by applying Gronwall's inequality. 
\end{proof}

Now, we make a few remarks on the existence, uniqueness and continuous dependence 
for \eqref{E:u}, or equivalently, for \eqref{E:uv}.
In order to establish the existence of solutions, 
we would need to regularize the equation in a certain way. 
The most straightforward way is to introduce mollifiers 
(approximations to the Dirac delta function) into the right sides of \eqref{E:uv}.
We then repeat the argument in the proof of Proposition \ref{P:energy_estimate} 
for the regularized problems to obtain energy estimates of the kind in 
\eqref{Es:energy_nonlinear0} independent of the mollification parameter. 
Subsequently, the Picard theorem for ordinary differential equations on a Banach space
applies to assert that solutions to the mollified equations exist 
on short intervals of time. 
The solutions can be continued on a time interval 
which is independent of the mollification parameter
thanks to the uniform bound from the energy estimate. 

Next, an estimate similar to the energy estimate but in a low norm 
($(u,v) \in H^{s'}(\mathbb{R}) \times H^{s'-3/2}(\mathbb{R})$ with $2\leq s' <3$ )
establishes that as the mollification parameter tends to zero,
the solutions of the mollified equations converge to a solution of the original (non-molified) 
equation \eqref{E:uv}. 
By interpolation, we find that this convergence occurs in the high Sobolev norms, as well.

Uniqueness follows by the energy estimates for the difference of two solutions, and
continuous dependence follows from the time-reversibility of the equations.

The detail of local well-posedness via the argument above is carried out in \cite{Am} 
for the vortex sheet problem with surface tension.

We summarize our result.

\begin{theorem}[The local well-posedness]\label{T:local-wellposedness}
The initial value problem for \eqref{E:u}, prescribed with the initial conditions 
$(u_0, u_1) \in H^s(\mathbb{R}) \times H^{s-3/2}(\mathbb{R})$, $s>2+1/2 $, 
is well-posed on the time interval $[0,T]$ for some $T>0$ and 
$(u(t), \partial_t u(t)) \in C([0,T]; H^s(\mathbb{R}) \times H^{s-3/2}(\mathbb{R}))$.
\end{theorem}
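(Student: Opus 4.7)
The plan is to prove local well-posedness of \eqref{E:u} by coupling it with the transport equation \eqref{E:transp} and running a standard mollifier/Picard iteration, the \emph{a priori} energy bound having essentially been obtained in Proposition \ref{P:energy_estimate}.  I would first recast \eqref{E:u} as the first-order system \eqref{E:uv} in $(u,v)$ with $v=\partial_t u+u\partial_\alpha u$ and regularize the right-hand side by convolution with a Friedrichs mollifier $J_\epsilon$.  At each step, $\theta$ is reconstructed from $u$ by solving \eqref{E:transp} via the method of characteristics (which is well-defined because $u\in H^s\subset C^{1,\mu}$ for $s>5/2$), and the remainder terms $r_1,\dots,r_6$ are then computed from $u$ and $\theta$ via \eqref{Es:r_1}--\eqref{Es:r_3} and Lemma \ref{L:theta_estimate}.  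The mollified problem becomes a locally Lipschitz ODE on $H^s(\reals)\times H^{s-3/2}(\reals)$, so the Picard--Lindel\"of theorem yields a smooth solution $(u^\epsilon,v^\epsilon)$ on a short interval $[0,T^\epsilon]$.

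The key point is that the estimate of Proposition \ref{P:energy_estimate} applies to the mollified problem with constants independent of $\epsilon$, because $J_\epsilon$ commutes with $\partial_\alpha$ and with $H$, and the commutators $[J_\epsilon,u\,\cdot\,]$ that appear upon regularization are of the same type as the ``lower order terms'' that were already absorbed in the proof of that proposition.  This gives the uniform bound $\mathfrak E^{s-3/2}(u^\epsilon,v^\epsilon)(t)\le C(u_0,u_1)$ on a common interval $[0,T]$ independent of $\epsilon$.  To pass to the limit I would derive a contraction-type estimate for the difference $(u^{\epsilon_1}-u^{\epsilon_2},v^{\epsilon_1}-v^{\epsilon_2})$ in a lower norm $H^{s'}\times H^{s'-3/2}$ with $5/2<s'<s$, using the same symmetrizing identity \eqref{E:energy_comm} but avoiding the very top derivative (where one would otherwise lose control).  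This produces a Cauchy sequence in the low norm; combining with the uniform high-norm bound and Sobolev interpolation yields a solution $(u,v)\in L^\infty([0,T];H^s\times H^{s-3/2})$, and the usual Bona--Smith-type limiting argument upgrades this to continuity in time into $H^s\times H^{s-3/2}$.  Uniqueness follows at once from the same low-norm difference estimate, and continuous dependence on the initial data is then obtained by applying the difference estimate to a family of solutions with converging data, using the time-reversibility of \eqref{E:u} to avoid any loss of regularity at $t=0$.

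The main obstacle is not the top-order energy estimate itself---Proposition \ref{P:energy_estimate} already absorbed the dangerous multi-derivative term $u^2\partial_\alpha^2 u$ through the crucial cancellation between the second term in \eqref{E:E_k1} and the second term in \eqref{E:E_k2}, see Remark \ref{R:energy}.  The delicate point is rather the closure of the bootstrap on $\theta$: one must verify that the $\theta$ produced by transporting $\theta_0$ along the flow of $u\in H^s$ (with source $H\partial_\alpha u+r_1$) has just enough spatial regularity---$H^{s+1/2}$, as required by Lemma \ref{L:regularity_gamma} and by the estimate \eqref{Es:theta}---to feed back into the bounds for $R(u,\partial_t u)$ without net loss of derivatives.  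This works because the forcing term $H\partial_\alpha u+r_1$ in \eqref{E:transp} is at the same level of regularity as $u$ itself (by \eqref{Es:r_1}) and because the transporting velocity $u$ is Lipschitz when $s>5/2$, so the flow preserves Sobolev regularity to the necessary order; once this bootstrap closes, the energy method combined with the mollifier scheme outlined above runs essentially verbatim as in the vortex-sheet treatment of \cite{Am}.
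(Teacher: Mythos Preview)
Your proposal is correct and follows essentially the same route as the paper: recast \eqref{E:u} as the first-order system \eqref{E:uv}, mollify, apply Picard on a Banach space, use Proposition \ref{P:energy_estimate} for $\epsilon$-uniform bounds, prove convergence via a low-norm difference estimate plus interpolation, and close the $\theta$-bootstrap through the transport equation and Lemma \ref{L:theta_estimate}, with details deferred to \cite{Am}. The paper likewise obtains uniqueness from the difference estimate and continuous dependence from time-reversibility, so the match is close.
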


\begin{remark}
The assumption that $s > 2 + 1/2$ is due to the commutator estimate used in the proof of the energy estimates (Proposition \ref{P:energy_estimate}).  Although this commutator estimate can be improved by putting $1/2$ derivatives on each of two copies of $u$ appearing in the energy calculation, we need $2+$ derivatives to couple with the transport equation.  In this sense, the local well-posedness should hold at $s>2$, although we do not prove it in this paper.
\end{remark}

\section{Strichartz estimates for the nonlinear problem}\label{S:strichartz_nonlinear}

At last, we are in a position to prove the Strichartz estimates \eqref{E:main-est}
and \eqref{E:main-est-sc} for solutions to \eqref{E:u}.

Let us consider the initial value problem for \eqref{E:u} 
with the initial conditions 
\[
u(0,\alpha)=u_0(\alpha) \quad \text{and}\quad \partial_t u(0,\alpha)=u_1(\alpha),
\]
where $(u_0, u_1) \in H^s(\mathbb{R}) \times H^{s-3/2}(\mathbb{R})$ for $s>2+1/2$.
Theorem \ref{T:local-wellposedness} applies to ensure that
a unique solution exists on the time interval $[0,T]$ for some $T>0$ and 
$(u(t), \partial_t u(t)) \in C([0,T]; H^s(\mathbb{R}) \times H^{s-3/2}(\mathbb{R}))$.

We apply $\partial^s_\alpha$ to \eqref{E:u} and the above initial conditions to obtain 
\begin{equation}\label{lin-ww-3'}
\begin{cases}
(\partial_t^2 -H\partial_\alpha^3 u +2u \partial_\alpha \partial_t u 
+u^2 \partial_\alpha^2 ) \partial^s_\alpha u =\tilde{R}(u, \partial_t u), \\
\partial^s_\alpha u(0,\alpha)=\partial^s_\alpha u_0(\alpha) \quad \text{and}\quad
 \partial_t \partial^s_\alpha u(0,\alpha)=\partial^s_\alpha u_1(\alpha),
\end{cases}
\end{equation}
where 
\begin{equation}\label{rtilde}
\tilde{R}(u,\partial_t u) = \partial^s_\alpha R + 
[\partial^s_\alpha, u^2 \partial_\alpha^2 + 2 u \partial_\alpha \partial_t]u.
\end{equation}
We view the initial value problem \eqref{lin-ww-3'} as a linear problem for $\partial^s_\alpha u$ 
of the form in \eqref{lin-ww-1} and \eqref{lin-ww-0-nh}, 
where the solution $u$ is the coefficient function $V(t,\alpha)$
and $\tilde{R}(u,\partial_t u)$ is the inhomogeneous term $R(t,\alpha)$. 
This can be done thanks to uniqueness of these initial value problems.

Let us take $s \geq 1$ sufficiently large so that 
$u \in H^{l}([0,T]) H^{k}(\reals)$, where $l,k \geq 1$ are large enough for 
the results in Section \ref{S:parametrix} and Section \ref{SS:energy} and 
so that $(\partial^s_\alpha u_0, \partial^s_\alpha u_1) \in L^2(\mathbb{R}) \times H^{-3/2}(\mathbb{R})$ 
and $\tilde{R} \in L^2([0,T]) L^2_\alpha(\reals)$.
Therefore, Theorem \ref{T:general-str} applies to yield 
the semiclassical Strichartz estimates \eqref{E:main-est-sc}, and Corollary \ref{C:homog-str} 
apply to yield the fixed time Strichartz estimates \eqref{E:main-est}.
This completes the proof of Theorem \ref{Main1} and Theorem \ref{Main2}.

\begin{appendix}

\section{Local smoothing effect for the nonlinear problem}\label{S:local_smoothing}

Remarked here is how the method of positive commutator yields 
the local smoothing effect for \eqref{E:u} of the gain of a $1/4$
derivative's smoothness.  The method used here was suggested to us by
T. Alazard, N. Burq, and C. Zuily.

\begin{theorem}[The local smoothing effect]
\label{T:smoothing-comm-arg}
If $s>2+1/2$ is sufficiently large, then the solution $u\in C([0,T];H^s(\reals))$ 
of the initial value problem for \eqref{E:u} 
with the initial condition $(u_0, u_1) \in H^s(\reals) \times H^{s-3/2}(\reals)$ satisfies the estimate
\begin{equation}\label{E:local-smoothing}
\| \lll \alpha \rrr^{-\rho} D_\alpha^{s+1/4} u \|_{L^2([0,T])L^2_\alpha} 
\leq C(\| u_0 \|_{H^s}, \| u_1 \|_{H^{s-3/2}} ),
\end{equation}
where $\rho>1/2$. Here, $\lll \alpha \rrr=(1+\alpha^2)^{1/2}$ describes weighted Sobolev spaces
and $D_\alpha=-i\partial_\alpha$.
\end{theorem}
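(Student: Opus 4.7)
Applying $|D_\alpha|^s$ to \eqref{E:u} and using the identity $-H\partial_\alpha^3 = |D_\alpha|^3$, the function $w := |D_\alpha|^s u$ satisfies the second-order equation
\[
\partial_t^2 w + |D_\alpha|^3 w + 2u\partial_\alpha \partial_t w + u^2 \partial_\alpha^2 w \,=\, \widetilde R,
\]
where $\widetilde R$ is of the form in \eqref{rtilde} and is controlled in $L^1([0,T])L^2_\alpha$ by the energy of Proposition \ref{P:energy_estimate}. Paralinearizing $u\partial_\alpha$ as the paraproduct $T_u\partial_\alpha$ via Bony's calculus and factoring the principal part as $(\partial_t + T_u\partial_\alpha - iT_{|\xi|^{3/2}})(\partial_t + T_u\partial_\alpha + iT_{|\xi|^{3/2}})$ modulo controllable remainders, I write $w = v_+ + v_-$ where $v_\pm$ solves the half-wave-type equation
\[
(\partial_t + T_u\partial_\alpha \mp iT_{|\xi|^{3/2}})v_\pm \,=\, F_\pm, \qquad v_\pm(0) \,=\, \tfrac{1}{2}|D_\alpha|^s u_0 \,\pm\, \tfrac{1}{2i}|D_\alpha|^{s-3/2}u_1,
\]
with $F_\pm \in L^1([0,T])L^2_\alpha$ controlled by the energy and by \eqref{rtilde}. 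The hypothesis $s > 2+1/2 > 3/2$ guarantees $v_\pm(0) \in L^2_\alpha$ whenever $(u_0, u_1) \in H^s \times H^{s-3/2}$.

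\textbf{Positive commutator (Kato smoothing).} I test the equation for $v_\pm$ against $A_\pm v_\pm$, where $A_\pm = \Op\bigl(\pm\,\varphi(\alpha)\,\sgn(\xi)\,\chi(\xi)\bigr)$ is a self-adjoint, order-zero pseudodifferential operator, $\chi$ is a low-frequency cutoff, and $\varphi \in C^\infty(\reals)$ is bounded and monotone with $\varphi'(\alpha) = \lll\alpha\rrr^{-2\rho}$. The Poisson bracket
\[
\bigl\{|\xi|^{3/2},\; \pm \varphi(\alpha)\sgn(\xi)\chi(\xi)\bigr\} \,=\, \pm\,\tfrac{3}{2}\,|\xi|^{1/2}\,\varphi'(\alpha)\,\chi(\xi),
\]
combined with the sharp G\aa rding inequality, yields
\[
\pm\, i\,\lll [T_{|\xi|^{3/2}}, A_\pm]\,v_\pm,\, v_\pm\rrr \,\geq\, c\,\bigl\|\,\varphi'(\alpha)^{1/2}\,|D_\alpha|^{1/4}\,v_\pm\,\bigr\|_{L^2_\alpha}^2 \,-\, C\,\|v_\pm\|_{L^2_\alpha}^2.
\]
Computing $\tfrac{d}{dt}\lll v_\pm, A_\pm v_\pm\rrr$ via the half-wave equation and integrating in $t$ over $[0,T]$, the boundary terms are bounded by $\|v_\pm\|_{L^\infty_T L^2_\alpha}^2$ and the source terms by $\|F_\pm\|_{L^1_T L^2_\alpha}\|v_\pm\|_{L^\infty_T L^2_\alpha}$, so that summing the $+$ and $-$ estimates gives
\[
\bigl\|\,\lll\alpha\rrr^{-\rho}\,|D_\alpha|^{1/4}\,w\,\bigr\|_{L^2([0,T])L^2_\alpha} \,\leq\, C\,\bigl(\|u_0\|_{H^s} + \|u_1\|_{H^{s-3/2}}\bigr),
\]
which, upon recalling $w = |D_\alpha|^s u$, is precisely \eqref{E:local-smoothing}.

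\textbf{Main obstacle.} The hard part is controlling the paradifferential commutators coming from the variable-coefficient term $T_u\partial_\alpha$, which is of order one and so a priori competes with the smoothing gain from the dispersive term $T_{|\xi|^{3/2}}$. By Bony's symbolic calculus, $[T_u\partial_\alpha, A_\pm]$ is of order zero with coefficient controlled by $\|u\|_{H^{5/2+}}$, and the contribution $\lll [T_u\partial_\alpha, A_\pm]v_\pm, v_\pm\rrr$ is absorbed through Gronwall's inequality; the remainders in the half-wave factorization (arising from $[T_u\partial_\alpha, T_{|\xi|^{3/2}}]$ and related commutators) are of order at most $1/2$ in $\xi$ with $H^{s-1}$-coefficients, and can be absorbed into the smoothing bound with a small constant after paying the price $\|u\|_{H^{5/2+}}$ provided by Proposition \ref{P:energy_estimate}. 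Low frequencies, where $\chi(\xi)=0$, are handled directly by a standard $L^2$ energy estimate together with Hardy's inequality, which is the point at which the hypothesis $\rho > 1/2$ enters. Finally, the remainder $\widetilde R$ from \eqref{rtilde} is placed in $L^1_T L^2_\alpha$ via Proposition \ref{P:remainders} together with the regularity of the solution guaranteed by Theorem \ref{T:local-wellposedness}.
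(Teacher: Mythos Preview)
Your approach is correct in outline and is essentially the half-wave/paradifferential route of Alazard--Burq--Zuily \cite{ABZ-water}, which the paper cites but does not follow. The paper instead works directly with the second-order equation: after restricting to positive high frequencies (so that $H$ acts as $-i$ and the dispersive part becomes $i\partial_\alpha^3$), it takes as commutant the simple multiplication operator $A(\alpha)=\int_{-\infty}^\alpha\lll\beta\rrr^{-2\rho}\,d\beta$ and computes $[A,P]$ by hand. The main positive term is $3i\lll A_\alpha\partial_\alpha w,\partial_\alpha w\rrr=3\|\lll\alpha\rrr^{-\rho}\partial_\alpha w\|_{L^2}^2$, and all other terms (including the variable-coefficient contributions $2V\partial_t\partial_\alpha+V^2\partial_\alpha^2$) are controlled by energy with no paradifferential calculus needed. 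Your route requires more machinery (Bony's calculus, G\aa rding) but is more systematic and extends more readily to general dispersive symbols; the paper's route is elementary but relies on the explicit algebraic structure of $i\partial_\alpha^3$ and the frequency-sign restriction.

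One correction: in your ``main obstacle'' paragraph you assert that the factorization remainders coming from $[T_u\partial_\alpha,T_{|\xi|^{3/2}}]$ are of order at most $1/2$. This commutator is actually of order $3/2$, with principal symbol $-\tfrac{3}{2}u_\alpha|\xi|^{3/2}$. The argument still closes, but not by absorption into the smoothing term with a small constant as you suggest; rather, after the $|D_\alpha|^{-3/2}$ factor implicit in your definition of $v_\pm$ (via $v_\pm(0)=\tfrac12|D_\alpha|^s u_0\pm\tfrac1{2i}|D_\alpha|^{s-3/2}u_1$), these remainders land in $L^1_TL^2_\alpha$ as part of $F_\pm$ and are handled by Cauchy--Schwarz against $\|v_\pm\|_{L^\infty_TL^2_\alpha}$, exactly as you already do for the source terms.
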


\begin{proof}
We recall that the energy method yields the local well-posedness 
of the initial value problem for \eqref{E:u}
and $u \in C([0,T];H^{s}_\alpha(\reals))$ for some $T>0$. 

Once we settle the issue of existence and uniqueness,
we view \eqref{E:u} as 
\[ 
\partial_t^2u-H\partial_\alpha^3u+2V(t,\alpha)\partial_t \partial_\alpha u
+V^2(t,\alpha)\partial_\alpha^2u=R(t,\alpha),\]
where the variable coefficient $V(t,\alpha)$ replaces the solution $u$ 
and $R(t,\alpha)$ replaces the remainder $R(u,\partial_t u)$,
as in Section \ref{S:strichartz_nonlinear}.
We choose $s>1$ large so that $V(t,\alpha)$ and $R(t,\alpha)$ have the required regularity. 

Let $\psi \in \mathcal{C}^\infty (\mathbb{R})$ be such that 
$\psi(\xi)\equiv1$ for $|\xi| \geq 1$ and $\psi(\xi)\equiv 0$ for $|\xi|\leq 1/2$, and let  
\[u=w+v,\] where $w=\psi(D_\alpha)u$ is the high frequency part of the solution 
and $v$ is the low frequency part of the solution. 
It is straightforward that $v$ belongs to every Sobolev space, 
and thus $v$ is as smooth as we want. 
It suffices to show \eqref{E:local-smoothing} for $w$. 

Further, let  $\psi(D_\alpha)=\psi^+(D_\alpha) + \psi^-(D_\alpha),$
where $ \psi^+=\psi|_{\xi\geq 0}$ and $\psi^-=\psi|_{\xi\leq 0}$, and let 
$w^\pm=\psi^\pm(D_\alpha)u$. 
We will present the argument only for positive high frequencies $w^+$. 
For simplicity of notation, in what follows, we write $w$ for $w^+$.

For positive high frequencies $\xi>0$, 
the Hilbert transform $H$ behaves like the multiplication by $-i$, and thus
\begin{equation}\label{E:w-tildeR}
Pw=(\partial_t^2+i\partial_\alpha^3+2V(t,\alpha)\partial_t\partial_\alpha
+V^2(t,\alpha)\partial_\alpha^2)w=\tilde{R}(t,\alpha),
\end{equation}
where $$\tilde{R}=\psi^+(D_\alpha)R(t,\alpha)+[2V(t,\alpha)\partial_t\partial_\alpha
+V^2(t,\alpha)\partial_\alpha^2, \psi^+(D_\alpha)]u$$ 
satisfies the estimate
\begin{equation}\label{Es:tildeR} 
\|\tilde{R}\|_{L^2_\alpha(\reals)} \leq \| \psi^+(D_\alpha) R \|_{L^2(\reals)}
+ C(\|w\|_{H^1_\alpha(\reals)} +\|\partial_t w\|_{L^2_\alpha(\reals)}),
\end{equation}
where $C>0$ depends on $V$ and its derivatives.

Let
\[A(\alpha)=\int_{-\infty}^\alpha \lll \beta \rrr^{-2\rho} \, d\beta\]
for $\rho>1/2$ as in the statement of the theorem. 
Let $\lll \cdot, \cdot \rrr_{L^2}$ denote the (Hermitian) $L^2_\alpha$-inner product. We compute 
\begin{align*}
2i \,\text{Im} \int_0^T \lll A\tilde{R}, w \rrr_{L^2} dt =& 
\int_0^T \lll A\tilde{R}, w \rrr_{L^2} dt -\int_0^T \lll w, A\tilde{R} \rrr_{L^2} dt  \\
= & \int_0^T \lll APw, w\rrr_{L^2} dt - \int_0^T \lll Aw, Pw \rrr_{L^2} dt \\
=& \int_0^T \lll [A,P] w, w \rrr_{L^2} dt + \int_0^T \lll (P - P^*)Aw, w\rrr_{L^2} dt \\
&+ \Big[ \lll A \partial_t w, w \rrr_{L^2} \Big]_0^T - \Big[ \lll A w, \partial_t w \rrr_{L^2} \Big]_0^T .
\end{align*}
Here, $P^*$ denotes the adjoint of $P$. 
The second equality uses that $A$ is self-adjoint, 
and the third inequality uses integrations by parts in $t$. 
The sum of the boundary terms is $2i[\text{Im} \lll A \partial_t w, w \rrr_{L^2}]^T_0$.  

It is important to note that although the solution to the nonlinear equation \eqref{E:u} 
may be assumed real-valued, the solution $w$ of \eqref{E:w-tildeR} is not, 
since we have cut it off to positive frequencies.  
Indeed, if we add back in the negative frequencies so that $w$ is once more real-valued 
and if we try to construct an appropriate commutant, then 
we are inevitably led to use the Hilbert transform as part of the commutant,
and the resulting boundary terms do not cancel.

Next, it is straightforward that 
\begin{multline*}
[P,A]w=3iA_\alpha \partial_\alpha^2w+ 3iA_{\alpha\alpha}\partial_\alpha w
+iA_{\alpha\alpha\alpha}w \\ +2V(t,\alpha)A_\alpha \partial_t w
+2V^2(t,\alpha)A_\alpha \partial_\alpha w+V^2(t,\alpha) A_{\alpha\alpha}w.
\end{multline*}

Now, we compute 
${\displaystyle 
\int^T_0 \lll Aw, Pw\rrr_{L^2} dt=\int^T_0 \lll P^*Aw, w\rrr_{L^2} dt.}$
Integrations by parts in $t$ and in $\alpha$ yield
\begin{align*}
\int^T_0 \lll Aw, \partial_t^2w\rrr_{L^2} dt =& 
\Big[\lll Aw, \partial_t w\rrr_{L^2} \Big]^T_0 -\Big[\lll A\partial_t w,w\rrr_{L^2}\Big]^T_0 
+\int^T_0 \lll A\partial_t^2w, w\rrr_{L^2} dt \\
=& -2i \Big[\text{Im} \lll A \partial_t w, w \rrr_{L^2}\Big]^T_0+ \int^T_0 \lll A\partial_t^2w, w\rrr_{L^2} dt,\\ 
\int^T_0 \lll Aw, i\partial_\alpha^3 w\rrr_{L^2} dt =&\int^T_0 \lll i\partial_\alpha(Aw),w\rrr_{L^2} dt.
\end{align*}
Similarly,
\begin{align*}
\int^T_0 \lll Aw, 2V\partial_t \partial_\alpha w\rrr_{L^2} dt =&
\Big[\lll 2VAw, \partial_\alpha w \rrr_{L^2} \Big]^T_0 
-\int^T_0 \lll 2V_t A w, \partial_\alpha w\rrr_{L^2} dt \\
&+\int^T_0 \Big( \lll 2V_\alpha A\partial_t w, w\rrr_{L^2} dt 
+ \lll 2V\partial_t\partial_\alpha (Aw),w\rrr_{L^2}\Big) dt,\\
\int^T_0 \lll Aw, V^2 \partial_\alpha^2 w\rrr_{L^2} dt =&
\int^T_0 \Big( \lll (V^2)_{\alpha\alpha}Aw, w\rrr_{L^2} \\ 
&\qquad + \lll 2(V^2)_\alpha \partial_\alpha (Aw),w\rrr_{L^2}
+\lll V^2\partial_\alpha^2 (Aw),w\rrr_{L^2} \Big) dt.
\end{align*}
Therefore, 
\begin{align*}
3i \int^T_0 \lll A_\alpha \partial_\alpha w, \partial_\alpha w\rrr_{L^2} dt=&
2i\int^T_0 \text{Im}\lll A\tilde{R},w\rrr_{L^2} dt +4i\Big[\text{Im} \lll A \partial_t w, w \rrr_{L^2}\Big]^T_0 \\
& + i\int^T_0 \Big( \lll A_{\alpha\alpha\alpha}w,w\rrr_{L^2}
 +  \lll 2VA_\alpha \partial_t w,w\rrr_{L^2} \Big) dt \\
&+\int^T_0 \Big( \lll 2V^2A_\alpha \partial_\alpha w,w\rrr_{L^2} 
+ \lll V^2A_{\alpha\alpha} w,w\rrr_{L^2} \Big) dt \\
&+\Big[\lll 2VAw, \partial_\alpha w\rrr \Big]^T_0-\int^T_0\lll 2V_tAw,\partial_\alpha w\rrr_{L^2} dt \\
&+\int^T_0 \Big( \lll 2V_\alpha \partial_t(Aw),w\rrr_{L^2} \\
&\qquad  +\lll (V^2)_{\alpha\alpha}Aw, w\rrr_{L^2} 
+\lll 2(V^2)_\alpha \partial_\alpha (Aw),w\rrr_{L^2} \Big) dt.
\end{align*}

Since $A$ and its derivatives are in $L^\infty(\reals)$, 
using the energy estimates, Sobolev embeddings, and the estimates for $\tilde{R}$, 
it follows that  
\begin{align*}
\|A_\alpha \partial_\alpha w\|^2_{L^2_TL^2_\alpha} \leq 
C(& \| \tilde{R} \|_{L^2_TH^{-3/4}}^2 + \| w \|_{L^2_T H^{3/4}}^2   + \| w \|_{L^\infty_T H^{3/4} }^2 + \| \partial_t w \|_{L^\infty_T
  H^{-3/4}}^2 \\
& + \|V\|_{L^\infty_T H^s_\alpha}^2
\|w\|_{L^2_TH^{3/4}_\alpha}^2 + \|\partial_t
w\|_{L^2_TH^{-3/4}_\alpha}^2 \\
& + (1 + \|V^2\|_{L^\infty_T H^s_\alpha}^2)
\|w\|^2_{L^2_TH^{1/2}_\alpha} + (1 + \|V\|_{L^\infty_T H^s_\alpha}^2)
\|w\|^2_{L^\infty_T H^{1/2}_\alpha} \\
& + (1 + \|\partial_t V\|_{L^\infty_T H^s_\alpha}^2)
\|w\|^2_{L^2_T H^{1/2}_\alpha} \\
&  + \|\partial_\alpha V\|_{L^\infty_T H^s_\alpha}^2
\|w\|_{L^2_TH^{3/4}_\alpha}^2 + \|\partial_t
w\|_{L^2_TH^{-3/4}_\alpha}^2 \\
& + (1 + \| \partial_\alpha^2 (V^2)\|_{L^\infty_T L^\infty_\alpha}^2  )
\| w \|_{L^2_T L^2_\alpha}^2 \\
& + (1 + \|\partial_\alpha (V^2)\|_{L^\infty_T H^s_\alpha}^2)
\|w\|^2_{L^2_T H^{1/2}_\alpha} )\\
\leq  C(&T,  \|V\|_{H^{3/2}_T H^{s'}_\alpha}) 
(\|w\|_{L^\infty_TH^{3/4}_\alpha}+\|\partial_t w\|_{L^\infty_TH^{-3/4}_\alpha})^2
\end{align*}
for some $s'>0$.  This completes the proof.
\end{proof}

\section{Assorted proofs: formulation}\label{A:proofs}

We collect the proofs of \eqref{Es:gamma_t}, Corollary \ref{C:(K[z]f)_t}, 
Lemma \ref{L:theta_estimate}, and Lemma \ref{L:r_t}.
  
In order to estimate $\|\gamma_t\|_{H^s}$ in terms of $u$ and $\theta$, we recall that
\[
\partial_t \gamma=S\partial_\alpha^2 \theta
+\partial_\alpha((\V-\mathbf{W}\cdot \hat{\mathbf{t}})\gamma)
-2\mathbf{W}_t \cdot \hat{\mathbf{t}}-\frac{1}{2}\gamma\partial_\alpha\gamma
+2(\V-\mathbf{W}\cdot \hat{\mathbf{t}}) \mathbf{W}_\alpha \cdot \hat{\mathbf{t}}.
\]

We expand $\mathbf{W}_t \cdot \hat{\mathbf{t}}$ as
\begin{align*}
\mathbf{W}_t\cdot \hat{\mathbf{t}}=&\text{Re}(z_\alpha\overline{\Phi}(\mathbf{W}_t))\\
=& \text{Re}\left( \frac{1}{2\pi i} z_\alpha(\alpha)\text{PV}
\int \frac{\gamma_t(\alpha')}{z(\alpha)-z(\alpha')}d\alpha' \right)\\
&+\text{Re} \left( \frac{1}{2\pi i} z_\alpha(\alpha)\text{PV}
\int \gamma(\alpha') \frac{z_t(\alpha)-z_t(\alpha')}{(z(\alpha)-z(\alpha'))^2}d\alpha'\right) 
:=\mathcal{J}[z]\gamma_t +R_5,
\end{align*}
where
$\mathcal{J}[z]f=\text{Re} \left(2iz_\alpha H\left(\frac{f}{z_\alpha}\right)
+z_\alpha(\alpha)\mathcal{K}[z]f\right)$.
Accordingly, the above equation for $\gamma_t$ takes the form as
\begin{equation}\label{E:gamma_t}
(1+2\mathcal{J}[z])\gamma_t=S\partial_\alpha^2\theta
+\partial_\alpha((\V-\mathbf{W}\cdot \hat{\mathbf{t}})\gamma ) 
-\frac12 \gamma \partial_\alpha \gamma+2(\V-\mathbf{W}\cdot \hat{\mathbf{t}})
\mathbf{W}_\alpha \cdot \hat{\mathbf{t}}-2R_5.
\end{equation}

It is proved in \cite[Lemma 6.1]{Am} that $(1+2\mathcal{J}[z])^{-1}:L^2 \to L^2$ is bounded. 
One observes that $R_5$ may be written in such a way that it is the sum of terms 
which differ from $R_3$ and $R_4$ by multiplication by $i$, 
and therefore, they are estimated mutandis mutandi to yield that
\[
\|R_5\|_{H^s} \leq C(\|\theta\|_{H^{s+1}})(1+\|u\|_{H^{s+1}})^2
\]
for $s\geq 1$. 
The argument in the proof of \cite[Lemma A.4]{Am} applies to assert \eqref{Es:gamma_t}

\begin{proof}[Proof of Corollary \ref{C:(K[z]f)_t}]
We write 
$$\partial_t(\mathcal{K}[z]f)=
\mathcal{K}[z](\partial_t f)+\frac{1}{2i}H\left(\frac{f}{z_\alpha^2}z_{\alpha t}\right)
-\frac{1}{2\pi i} \int f(\alpha') \frac{z_t(\alpha)-z_t(\alpha')}{(z(\alpha)-z(\alpha'))^2} d\alpha'.$$
Here, the last term is related to $R_3$ and $R_4$ 
in the proof of Lemma \ref{L:W_t}, and thus it is estimated in a similar way.

By the usual product rule, \eqref{Es:H_t} follows.
\end{proof}

\begin{proof}[Proof of Lemma \ref{L:theta_estimate}]
For $s=0,1$, we use the transport equation \eqref{E:transp}.
By multiplication by $\theta$ to \eqref{E:transp} and integration by parts yield
$$ \frac{d}{dt} \int \theta^2 d\alpha = \frac{1}{2} \int \theta ^2 \partial_\alpha u \ d\alpha 
+\int \theta H \partial_\alpha u \ d\alpha + \int \theta r_1(t,\alpha) \ d\alpha.$$
We obtain the analogous identity for $\int (\partial_\alpha \theta )^2$, 
and by adding,
$$\frac{d}{dt} \|\theta\|_{H^1} \leq \|\partial_\alpha u\|_{L^\infty} \|\theta\|_{H^1}
+2(\|u\|_{H^2}+\|r_1\|_{H^1}).$$
Gronwall's inequality then applies to give that 
\begin{align*}
\|\theta(t)\|_{H^1} \leq & \|\theta_0\|_{H^1} +\int^t_0 (\|u \|_{H^2} +\|r_1\|_{H^1} )dt'  \\
&+C\int^t_0 \|\partial_\alpha u\|_{L^\infty}\left( \|\theta_0\|_{H^1} +
\int^{t'}_0 (\|u \|_{H^2} +\|r_1\|_{H^1})\right) \exp \left( \int^t_{t'} \|\partial_\alpha u\|_{L^\infty}\right) dt' \\
\leq & C(\|u\|_{H^2})(1+\|u\|_{H^2} +\|r_1\|_{H^1}).
\end{align*}
Indeed, $\|r_1\|_{H^1} \leq C(\|\theta\|_{H^2})(1+\|u\|_{H^1})$.

Next, for $s=2$ by multiplying (\ref{E:system}b) by $\partial_\alpha^2 \theta$ 
and by integrating it it follows that 
$$\|\partial_\alpha^2 \theta \|_{L^2}^2 \leq \|\partial_t u\|_{L^2} +\|u\|_{H^1}^2 
+\|\partial_\alpha \theta\|_{L^\infty}^2 \|\partial_t u\|_{L^2} 
+C(\|\theta\|_{H^2})(1+\|u\|_{H^1})^2.$$
Together with the $\|\theta\|_{H^1}$ estimate above, this proves \eqref{Es:theta} for $s=0$.
For $s > 2$, we take derivative of (\ref{E:system}b) and repeat the argument. 
This proves the assertion.
\end{proof}

\begin{proof}[Proof of Lemma \ref{L:r_t}]
First, it is straightforward to see that 
\[ 
\partial_t r_1=-H(\mathbf{m}_t \cdot \widehat{\mathbf{t}})-H(\mathbf{m} \cdot \widehat{\mathbf{n}}) \theta_t
+\mathbf{m}_t \cdot \widehat{\mathbf{n}} + (\mathbf{m} \cdot \widehat{\mathbf{t}}) \theta_t,
\]
where
\begin{align*}
\overline{\Phi}(\mathbf{m}_t)=& z_{\alpha t} \mathcal{K}[z]\left( \frac{\gamma_\alpha}{z_\alpha}-
\frac{\gamma z_{\alpha\alpha}}{z_\alpha^2}\right)
+z_\alpha \partial_t \left( \mathcal{K}[z]\left(\frac{\gamma_\alpha}{z_\alpha}-
\frac{\gamma z_{\alpha\alpha}}{z_\alpha^2}\right)\right) \\
&+\frac{z_{\alpha t}}{2i} \left[H, \frac{1}{z_\alpha^2}\right]\left(\gamma_\alpha-
\frac{\gamma z_{\alpha\alpha} }{z_\alpha} \right) 
+\frac{z_{\alpha}}{2i} \partial_t \left[H, \frac{1}{z_\alpha^2}\right]\left(\gamma_\alpha-
\frac{\gamma z_{\alpha\alpha}}{z_\alpha} \right).
\end{align*}
Then, \eqref{Es:K_t} and \eqref{Es:H_t} apply to assert that 
\[
\| \mathbf{m}_t\|_{H^s} \leq C(\|\partial_t u\|_{H^1}, \|u\|_{H^{s+1}}, \|\partial_t u\|_{H^{s-1}}),
\]
and, in turn, it follows \eqref{Es:r_1t}.
The difference is estimated in the usual way.

Next is the estimate for $\partial_t r_2$. 
We recall from the proof of Lemma \ref{L:W_t} that 
\[
\partial_t r_2=\frac12 H\partial_t^2 \gamma +\partial_t (R_1+R_2+R_3+R_4).
\]
In order to estimate for $\partial_t^2 \gamma$, we take the derivative with respect to $t$-variable 
of \eqref{E:gamma_t} to obtain 
\begin{align*}
(id+J[z])\gamma_{tt} =& -\text{Re} \Big( 2i z_{\alpha t} H\left( \frac{\gamma_t}{z_\alpha}\right) 
-2iz_\alpha H\left(\frac{\gamma_t}{z_\alpha^2}z_{\alpha t}\right)  \\
&+z_{\alpha t}\mathcal{K}[z]\gamma_t 
+\frac{z_\alpha}{2i} H\left(\frac{\gamma_t}{z_\alpha^2}z_{\alpha t}\right) 
-\frac{z_\alpha}{2\pi i} \int \gamma_t(\alpha') 
\frac{z_t(\alpha)-z_t(\alpha')}{(z(\alpha)-z(\alpha'))^2}d\alpha'\Big)\\
&+\theta_{\alpha \alpha t} +\partial_t \partial_\alpha(\gamma (\V-\mathbf{W}\cdot \widehat{\mathbf{t}}))
-2\partial_t \left(\frac14 \gamma \gamma_\alpha-(\V-\mathbf{W}\cdot \widehat{\mathbf{t}})
\mathbf{W}_\alpha\cdot \widehat{\mathbf{t}}\right).
\end{align*}
Each term on the right side of the equation is estimated 
by using various estimates we established previously, and then
we conclude that
\[
\|\partial_t^2 \gamma\|_{H^s} \leq C(\|u\|_{H^{s+2}}, \|\partial_t u\|_{H^{s+1}}).
\]
Again, using the estimates established previously, we obtain 
\[
\|\partial_t (R_1+R_2+R_3+R_4)\|_{H^s} \leq 
C(\|\partial_t u\|_{H^1}, \|u\|_{H^{s+1}}, \|\partial_t u\|_{H^{s-1}}).
\]
The differences of $\partial_t^2 \gamma $ 
and $\partial_t R_j$'s, $j=1,2,3,4$, are obtained in the usual way.
Therefore follows \eqref{Es:r_2t}.

That is, without the cancellation of the highest-order derivative term 
$\partial_\alpha \theta \partial_\alpha^2 u$ in $r_2$, 
the remainder $R(u, \partial_t u)$ is of second-order in $\alpha$. 
This completes the proof.
\end{proof}

\section{Assorted proofs: parametrix construction}\label{A:parametrix}

We first derive \eqref{dxh-eqn}. Let us first write $iHD_\alpha^3w$ via the Fourier transform as  
\begin{multline*}
iHD_\alpha^3 w(t,\alpha) = - H\partial_\alpha^3 w(t,\alpha) \\ 
= \frac{1}{4 \pi } \iint e^{i(\alpha-\alpha') \xi} | \xi |^3
\iint e^{-i\beta \xi' } e^{i \phi(t, \alpha', \xi')} f(\beta) \, d\beta d\xi' \, d\alpha' d \xi.
\end{multline*}
In what follows, we recall that we implicitly assume 
that both $\xi$ and $\phi_\alpha$ are large and positive and that $\xi \sim 2^j$.  Our goal is to
eliminate the dependence on $\xi$ in the above integral so that the
integration in $\alpha'$ and $\xi$ yields $\delta(\alpha - \alpha')$
(so that the above representation reduces to an integral in $\beta$ and $\xi'$ only).  
To this end, we write 
\[
\phi(t, \alpha, \xi') = \phi(t, \alpha', \xi') + \varPhi(\alpha,\alpha')(\alpha-\alpha')
\]
and we perform a change of variables to obtain
\begin{multline*}
iHD_\alpha^3 w(t,\alpha)  = \frac{1}{4 \pi } \iint e^{i(\alpha-\alpha') \eta} | \eta +
\varPhi(\alpha,\alpha')|^3 \\ \cdot \iint e^{-i\beta \xi' } e^{i \phi(t, \alpha, \xi')}  f(\beta)  
d\beta d\xi'  \, d\alpha' d \eta.
\end{multline*}
We further write $\varPhi(\alpha,\alpha') =  \phi_\alpha (t, \alpha', \xi') + \varPhi_1(t,\alpha,\alpha', \xi')$, where
\begin{multline*}
\varPhi_1(t,\alpha,\alpha',\xi') =  \frac{1}{2} \phi_{\alpha\alpha}(t,\alpha',\xi')(\alpha-\alpha')  \\ +
\frac{1}{6}  \phi_{\alpha\alpha\alpha}(t,\alpha',\xi')(\alpha-\alpha')^2 +
\tilde{\varPhi}(t,\alpha,\alpha',\xi')(\alpha-\alpha')^3
\end{multline*}
for some $\tilde{\varPhi}=\mathcal{O}(\sup_\alpha |\partial_\alpha^4 \phi|)$, 
and accordingly, the above integral becomes
\begin{multline*}
iHD_\alpha^3 w (t,\alpha)  \\
 = \frac{1}{4 \pi}\iint e^{i(\alpha-\alpha') \eta} \left(| \eta +
\phi_\alpha|^3 + 3   | \eta +\phi_\alpha|^2 \varPhi_1
 + 3 | \eta + \phi_\alpha| \varPhi_1^2 + \varPhi_1^3\right) \\
  \cdot \iint e^{-i\beta \xi' } e^{i \phi(t, \alpha, \xi')}  f(\beta) \, d\beta d\xi' \, d\alpha' d \eta.
\end{multline*}
We keep in mind that $\phi_\alpha$ in the above expression is evaluated at $(t,\alpha', \xi')$.
Now, $\varPhi_1$ is a sum of terms multiplied with powers of $\alpha-\alpha'$, which
upon integrations by parts in $\eta$ are cancelled and the above integral, in turn, becomes
\begin{equation}\label{int-form-1}
\begin{split}
iHD_\alpha^3 w(t,\alpha)  = \frac{1}{4 \pi}\iint e^{i(\alpha-\alpha') \eta} 
\Big(& |  \eta + \phi_\alpha|^3 + 3i| \eta + \phi_\alpha| \phi_{\alpha\alpha} 
- \phi_{\alpha\alpha\alpha}   \Big)  \\ &  \cdot
\iint e^{-i\beta \xi' } e^{i \phi(t, \alpha, \xi')}  f(\beta) \, d\beta d\xi' d\alpha' d \eta.
\end{split}
\end{equation}
Under the assumption that either both $\eta$ and $\phi_\alpha$ are
large and positive or both are large and negative the above formal argument is justified.
Indeed, the dyadic frequency localization assumption \eqref{dy-loc} on $f$ 
implies that $w^+$ is also localized to dyadic frequencies (possibly with different constants), 
and hence the singularity of $| \eta|$ at $\eta = 0$ does not enter into the above calculation.  

We now expand $| \eta + \phi_\alpha |^3$ for both $\eta$ and
$\phi_\alpha$ large and positive (see Lemma \ref{freq-lemma} for a justification
of this)
\[
| \eta + \phi_\alpha |^3 = | \eta|^3 + 3 |\phi_\alpha|| \eta|^2 + 3
|\phi_\alpha|^2 |\eta| + |\phi_\alpha|^3.
\]
Again, we keep in mind that $\phi_\alpha$ is evaluated at $(t, \alpha', \xi')$.  
Substituting this in \eqref{int-form-1} and integrations by parts in $\alpha'$ then yield that
\begin{multline*}
iH D_\alpha^3 w(t,\alpha)  =  \frac{1}{4\pi}\iint e^{i(\alpha-\alpha') \eta} \\
\cdot \Big( | \eta|^3 +3 | \eta|^2 | \phi_\alpha| + 3 | \eta| |\phi_\alpha|^2 
+ |\phi_\alpha|^3    +3i(| \eta | + |\phi_\alpha|) \phi_{\alpha\alpha} -
\phi_{\alpha\alpha\alpha}   \Big) \\
 \cdot  \iint e^{-i\beta \xi' } e^{i \phi(t, \alpha, \xi')}  f(\beta)  d\beta d\xi' d\alpha' d \eta. 
  \end{multline*}
Finally, \eqref{dxh-eqn} follows upon integrations in $\alpha'$ and $\eta$.

\

Next, we show that the mapping in \eqref{y-x} is invertible 
for each $\zeta \in [-\epsilon,\epsilon]$ and $0\leq t\leq 2^{j/2}T$. 
It suffices to show that
\[
\left| \frac{ \partial \alpha}{\partial \beta} \right| \geq C^{-1}>0\qquad 
\text{for $0\leq  t  \leq 2^{-j/2}T$}.
\]

First, if $\eta(t)$ is a solution of (\ref{Ham-system-1}a) with the initial condition $\eta(0)=\zeta$ then 
\begin{align*}
[ (1+2^{-j/2}  \xi^{1/2}\eta)^2]^{\cdot}
 = 2(1+ 2^{-j/2} \xi^{1/2}\eta) 2^{-j/2} \xi^{1/2}\dot{\eta} 
 = 2 V_\alpha(t,2^{j/2}\alpha)  (1+2^{-j/2} \xi^{1/2}\eta)^2,
\end{align*} 
whence 
\[
 -C(1+2^{-j/2} \xi^{1/2}\eta)^2 \leq [(1+2^{-j/2} \xi^{1/2}\eta)^2]^\cdot \leq C(1+2^{-j/2} \xi^{1/2}\eta)^2
 \] 
for some $C \geq \|V_\alpha\|_{L^\infty_TL^\infty_\alpha}$. 
We recall that the dot represents differentiation in the $t$-variable.
By Gronwall's inequality it then follows that
\[
(1+2^{-j/2} \xi^{1/2}\zeta)^2 \exp(-Ct) \leq (1+2^{-j/2} \xi^{1/2}\eta)^2 \leq (1+2^{-j/2} \xi^{1/2}\zeta)^2 \exp(Ct).
\]
That is, $(1+2^{-j/2} \xi^{1/2}\eta(t))^2=(1+2^{-j/2} \xi^{1/2}\zeta)^2(1+\mathcal{O}(t))$. 

Next, we calculate
\begin{align*}
\Big[ \Big( & \frac{\partial (1+2^{-j/2} \xi^{1/2}\eta)}{\partial  \beta}  \Big)^2\Big]^\cdot \\ &  = 
2 \cdot 2^{-j/2} \xi^{1/2} \frac{\partial (1+2^{-j/2} \xi^{1/2}\eta)}{\partial \beta} 
 \frac{\partial \dot{\eta}}{\partial \beta} \\
&=  2  \frac{\partial (1+2^{-j/2} \xi^{1/2}\eta)}{\partial \beta}  \Bigg( V_\alpha(t,2^{j/2}\alpha) 
  \frac{\partial (1+2^{-j/2} \xi^{1/2}\eta)}{\partial \beta} \\
 &\qquad \qquad \qquad \qquad \qquad \qquad 
 + 2^{j/2}V_{\alpha\alpha}(t,2^{j/2}\alpha) (1+ 2^{-j/2} \xi^{1/2}\eta)
  \frac{\partial \alpha}{\partial \beta} \Bigg) \\
 & \leq  \left(2 |V_\alpha| + 2^{j/2}|V_{\alpha\alpha}(1+2^{-j/2} \xi^{1/2}\eta)|^2\right) 
  \left( \frac{\partial (1+2^{-j/2} \xi^{1/2}\eta)}{\partial \beta}\right)^2 
  +   2^{j/2}\left( \frac{\partial  \alpha}{\partial \beta} \right)^2.
\end{align*}
By Gronwall's inequality it follows that
\be
\left( \frac{\partial (1+2^{-j/2} \xi^{1/2}\eta)}{\partial \beta}  \right)^2(t) \leq  
\exp (Ct2^{j/2}\|V\|_{L^\infty_{T} W^{2,\infty}_\alpha }^2) 
\left(C'+ 2^{j/2}\int_0^t  \left( \frac{\partial  \alpha}{\partial \beta} \right)^2  \right)
\ee
for some $C, C'>0$.

Finally, we calculate
\begin{align*}
\Big[\Big( & \frac{ \partial \alpha }{\partial \beta} \Big)^2 \Big]^2  =  2
\frac{ \partial \alpha }{\partial \beta} \frac{ \partial \dot{\alpha} }{\partial \beta} \\
&=  2  \frac{ \partial \alpha }{\partial \beta} 
\Bigg( \frac{3}{4}2^{-j/2} \xi^{1/2} (1+2^{-j/2} \xi^{1/2}\eta)^{-1/2} \frac{ \partial (1+2^{-j/2} \xi^{1/2}\eta) }{\partial \beta} 
- 2^{j/2} \xi^{-1/2} V_{\alpha}(t,2^{j/2}\alpha)  \frac{ \partial \alpha }{\partial \beta} \Bigg)\\
& \geq  -(2^{j/2}\xi^{-1/2}|V_\alpha|+1) \left( \frac{ \partial \alpha }{\partial \beta}\right)^2 \ - C (1+2^{-j/2} \xi^{1/2}\eta)^{-1}\left(\frac{\partial (1+2^{-j/2}
     \xi^{1/2}\eta)}{\partial \beta}\right)^2 \\
& \geq  -C\left( \frac{ \partial \alpha }{\partial \beta}\right)^2 -
C  \sup_{0 \leq t \leq 2^{-j/2}T} 2^{-j/2}\left( \frac{ \partial \alpha }{\partial \beta}\right)^2 .
\end{align*}
The claim then follows by Gronwall's inequality once 
$\frac{\partial \alpha }{\partial \beta} \Big|_{t=0} = 1$ and $0 \leq t \leq  2^{-j/2}T$ are observed. 
Consequently, the inverse function theorem applies to give that 
the mapping \eqref{y-x} is invertible for $0\leq t\leq 2^{-j/2}T$.

\section{Energy estimates for the linearized equation}\label{A:energy}

This appendix concerns the energy estimates \eqref{Es:linear_energy0}
and \eqref{Es:linear_energy1} for the linear problems \eqref{lin-ww-1} and \eqref{lin-ww-0-nh},
respectively, the idea of which will be used repeatedly throughout this work.

We write the second-order equation \eqref{lin-ww-1} as the following first-order system 
\begin{equation}\label{E:lin_uv}
\begin{cases}
\partial_tu=-V(t,\alpha)\partial_\alpha u+v, \\
\partial_tv=-V(t,\alpha)\partial_\alpha v+H\partial_\alpha^3 u
 +V_t(t,\alpha) \partial_\alpha u +V(t,\alpha)V_\alpha(t,\alpha) \partial_\alpha u.
\end{cases}
\end{equation}
In other words, $v=\partial_t u+V(t,\alpha)\partial_\alpha u$ is the directional derivative. 

Let us define the $k$-th energy associated to the above system by
\begin{equation}\label{E:lin_energy_s}
\mathcal{E}^k(t)=\frac12 \int_{-\infty}^\infty
\left((\partial_\alpha^{k+1}u)H\partial_\alpha (\partial_\alpha^{k+1}u) 
+(\partial_\alpha^k v)^2\right) d\alpha
\end{equation}
and define the energy function of order $s$ by
\begin{equation}\label{E:lin_energy}
\mathfrak{E}^s(t)=\|u\|_{L^2}^2(t)+\|v\|_{L^2}^2(t)+\sum_{k=1}^s \mathcal{E}^k(t).
\end{equation}
Note that $H\partial_\alpha$ is a positive operator with the symbol
of its Fourier transform $|\xi|$ and that
\[
\|f\|^2_{H^{1/2}} =\int^\infty_{-\infty} (f^2+fH\partial_\alpha f) \ d\alpha.
\]
It is immediate that $\mathfrak{E}^k(t)$ is equivalent to 
$\|u(t)\|^2_{H^{s+3/2}}+\|\partial_t u(t)\|^2_{H^s}$
provided that $\|V\|_{L^\infty([0,T])W^{s,\infty}_\alpha(\reals)}$ is bounded. 

We begin by investigating the time derivative of $\mathcal{E}^r$, by calculating
\begin{equation}\label{E:E_j'}
\begin{split}
\frac{d}{dt}\mathcal{E}^k(t)=&\int_{-\infty}^\infty \left((\partial_\alpha^{k+1} \partial_t u)
H\partial_\alpha (\partial_\alpha^{k+1} u) +
(\partial_\alpha^k \partial_t v) (\partial_\alpha^k v\right)) \ d \alpha \\ 
:=&\mathcal{E}^k_1+\mathcal{E}^k_2.
\end{split}
\end{equation}
The first equality uses that $H\partial_\alpha$ is self-adjoint. 

Let us first compute $\mathcal{E}^k_1$. By using the first equation in \eqref{E:lin_uv}
and the integration by parts we may write 
\begin{align*}
\mathcal{E}^k_1=& \int_{-\infty}^\infty \partial_\alpha^{k+1}(-V(t,\alpha)\partial_\alpha u +v)
H\partial_\alpha (\partial_\alpha^{k+1}u) \ d\alpha \\
=&- \int_{-\infty}^\infty V(t,\alpha)
(\partial_\alpha^{k+2}u)H\partial_\alpha^{k+2}u\ d\alpha \\
&+\int_{-\infty}^\infty (\partial_\alpha^{k+1}v) H\partial_\alpha \partial_\alpha^{k+1}u \ d\alpha 
+ \text{(lower order terms)}\\
=&\int_{-\infty}^\infty (\partial_\alpha^{k+1}v) H\partial_\alpha \partial_\alpha^{k+1}u \ d\alpha 
+ \text{(lower order terms)},
\end{align*}
where (lower order terms) is a collection of terms which can be bounded 
in terms of energy in a routine way.
The third equality uses \eqref{E:energy_comm}.

Similarly, we compute 
\begin{align*}
\mathcal{E}^k_2=& \int _{-\infty}^\infty \partial_\alpha^k
(-V(t,\alpha)\partial_\alpha v+H\partial_\alpha^3 u  \\ 
& \qquad \qquad \qquad -V_t(t,\alpha) \partial_\alpha u
-V(t,\alpha)V_\alpha(t,\alpha) \partial_\alpha u)(\partial_\alpha^k v)\ d\alpha \\
=& - \int_{-\infty}^\infty  (\partial_\alpha^k(V(t,\alpha)\partial_\alpha v)
- V(t,\alpha)\partial_\alpha^{k+1}v)(\partial_\alpha^kv)\ d\alpha \\
&-\int_{-\infty}^\infty (\partial_\alpha^{k+1}v)H\partial_\alpha^{k+2} u\ d\alpha
+ \text{(lower order terms)}.
\end{align*}
Again, (lower order terms) is made up of terms which can be bounded 
in terms of the energy in a routine way.
The first term on the right side is bounded by $\|V\|_{H^k}\|\partial_\alpha v\|_{H^{k-1}}$. 

The first term on the right side of $\mathcal{E}^k_1$ 
and the second term on the right side of $\mathcal{E}^k_2$
cancel when added together. Other terms are bounded in terms of the energy.
Therefore we have proved 
\[
\frac{d\mathfrak{E}^s}{dt} \leq C\mathfrak{E}^s,
\]
for some positive constant $C$, provided that $\|V\|_{L^\infty([0,T])W^{s,\infty}(\reals)}$ is bounded.
The energy estimates \eqref{Es:linear_energy0} with $s \geq 0$ then follows once Gronwall's inequality applies.

For the inhomogeneous problem \eqref{lin-ww-0-nh} with $s \geq 0$,
we proceed similarly to obtain 
\[
\frac{d\mathfrak{E}^s}{dt} \leq C\mathfrak{E}^k + \|R(t)\|_{H^{s}(\reals)}^2,
\]
from which \eqref{Es:linear_energy1} follows.

We next show that this can be extended to negative indices $s$.  Let
$0 \leq s' \leq 3/2$, and observe that if $u$ solves
\eqref{lin-ww-1} for $t \in [0,T_1]$, then $U  = \lll D_\alpha \rrr^{-s'}u$ satisfies 
\[
\begin{cases} 
P U = [P, \lll D_\alpha \rrr^{-s'}] u \qquad \text{for}\quad t \in [0, T_1], \\
U(0,\alpha) = \lll D_\alpha \rrr^{-s'} u_0, \qquad \partial_t
U(0,\alpha)=\lll D_\alpha \rrr^{-s'} u_1.
\end{cases}
\]
Thus $U$ satisfies the energy estimates \eqref{Es:linear_energy0}-\eqref{Es:linear_energy1} for
$s \geq 0$:
\begin{align}
\label{E:energy-Z-1}
 \| \partial_t U& \|_{L^\infty([0,T_1])H^{s}_\alpha} + 
\|U\|_{L^\infty([0,T_1])H^{s+3/2}_\alpha} \\
& \leq C_{T_1}
 ( \|[P,\lll D_\alpha \rrr^{-s'}]
u\|_{L^2([0,T_1])H^s_\alpha} + \| u_0 \|_{H^{s -s' + 3/2}_\alpha} + \| u_1
\|_{H^{s -s'}_\alpha}). \notag
\end{align}
The commutator term is bounded by
\begin{align*}
\|[P,\lll D_\alpha \rrr^{-s'}]
u\|_{L^2([0,T_1])H^s_\alpha} & \leq C ( \| \partial_t u \|_{L^2([0,T_1])H^{s -
    s'}_\alpha} + \| u \|_{L^2([0,T_1])H^{s -s' + 1}_\alpha} ) \\
& \leq C(T_1 )^{1/2} ( \| \partial_t u \|_{L^\infty([0,T_1])H^{s -
    s'}_\alpha} + \| u \|_{L^\infty([0,T_1])H^{s -s' + 1}_\alpha} ),
\end{align*}
but for $T_1>0$ sufficiently small and fixed, this can be absorbed
into the left hand side of \eqref{E:energy-Z-1}  to get
\begin{equation}
\label{E:energy-Z-2}
\| \partial_t u \|_{L^\infty([0,T_1])H^{s - s'}_\alpha} +
\|u\|_{L^\infty([0,T_1])H^{s-s'+3/2}_\alpha} \leq C (\| u_0 \|_{H^{s -s' + 3/2}_\alpha} + \| u_1
\|_{H^{s -s'}_\alpha})
 .
\end{equation} 
Applying this argument over a finite number of time steps of size
$T_1>0$ yields the estimate for any finite $T>0$, with constants
dependent on $T$.  A similar argument applies in the case of
\eqref{lin-ww-0-nh} to yield the estimate \eqref{Es:linear_energy1}
for $-3/2 \leq s \leq 0$.

\end{appendix}

\bigskip

\noindent{\bf Acknowledgement. }
We would like to thank N. Burq for pointing out several mistakes
in earlier versions of this work and for many helpful suggestions.  We would also like to thank T. Alazard, N. Burq, and
C. Zuily for suggesting the method used
in Appendix \ref{S:local_smoothing}.

HC was supported by an NSF Postdoctoral Fellowship while in residence
at the Mathematical Sciences Research Institution (MSRI). 
The work of VMH was supported partly by the NSF grant DMS-0707647.
The work of GS was supported partly by the NSF grant DMS-0602678.

\bigskip

\bibliographystyle{plain}
\bibliography{WWbib}

\end{document}